\tikzstyle{vertex}=[circle, draw, inner sep=0pt, minimum size=6pt]
\newcommand{\vertex}{\node[vertex]}
\def\NZQ{\Bbb}               
\def\ZZ{{\NZQ Z}}
\def\B'c{{\mathcal{B'}}}
\def\U'c{{\mathcal{U'}}}
\def\opn#1#2{\def#1{\operatorname{#2}}} 
\opn\chara{char}
\opn\length{\ell}
\opn\projdim{proj\,dim}
\opn\injdim{inj\,dim}
\opn\ini{in}
\opn\rank{rank}
\opn\depth{depth}
\opn\sdepth{sdepth}
\opn\indmat{indmat}
\opn\cochord{cochord}
\opn\pdim{pdim}
\opn\height{ht}
\opn\embdim{emb\,dim}
\opn\codim{codim}
\opn\Tr{Tr}
\opn\bigrank{big\,rank}
\opn\superheight{superheight}\opn\lcm{lcm}
\opn\trdeg{tr\,deg}%
\opn\reg{reg}
\opn\lreg{lreg}
\opn\set{set}
\opn\supp{Supp}
\opn\shad{Shad}
\opn\brs{brs}
\opn\div{div}
\opn\Div{Div}
\opn\cl{cl}
\opn\Cl{Cl}
\opn\Spec{Spec}
\opn\Supp{Supp}
\opn\supp{supp}
\opn\Sing{Sing}
\opn\Ass{Ass}
\opn\Min{Min}
\opn\size{size}
\opn\bigsize{bigsize}
\opn\lex{lex}
\opn\Ann{Ann}
\opn\Rad{Rad}
\opn\Soc{Soc}
\opn\Ker{Ker}
\opn\Coker{Coker}
\opn\Im{Im}
\opn\Hom{Hom}
\opn\Tor{Tor}
\opn\Ext{Ext}
\opn\End{End}
\opn\Aut{Aut}
\opn\id{id}
\opn\nat{nat}
\opn\GL{GL}
\opn\SL{SL}
\opn\mod{mod}
\opn\ord{ord}
\opn\aff{aff}
\opn\con{conv}
\opn\relint{relint}
\opn\st{st}
\opn\lk{lk}
\opn\cn{cn}
\opn\core{core}
\opn\vol{vol}
\opn\gr{gr}
\def\pot#1#2{#1[\kern-0.28ex[#2]\kern-0.28ex]}
\opn\dirlim{\underrightarrow{\lim}}
\opn\invlim{\underleftarrow{\lim}}
\let\union=\cup
\def\pnt{{\raise0.5mm\hbox{\large\bf.}}}
\def\Implies{\ifmmode\Longrightarrow \else
	\unskip${}\Longrightarrow{}$\ignorespaces\fi}
\def\implies{\ifmmode\Rightarrow \else
	\unskip${}\Rightarrow{}$\ignorespaces\fi}
\def\iff{\ifmmode\Longleftrightarrow \else
	\unskip${}\Longleftrightarrow{}$\ignorespaces\fi}
\newtheorem{Theorem}{Theorem}[section]
\newtheorem{Lemma}[Theorem]{Lemma}
\newtheorem{Corollary}[Theorem]{Corollary}
\newtheorem{Proposition}[Theorem]{Proposition}
\newtheorem{Remark}[Theorem]{Remark}
\newtheorem{Definition}[Theorem]{Definition}
\let\epsilon=\varepsilon
\let\phi=\varphi
\let\kappa=\varkappa
\numberwithin{equation}{section}
\title{some algebraic invariants of the edge ideals of some $q$-fold bristled graphs}
\author[Ayesha Saqib]{Ayesha Saqib}
\address{Ayesha Saqib, School of Natural Sciences, National University of Sciences and Technology Islamabad, Sector H-12, Islamabad Pakistan.}
\email{ayeshasaqib192020@gmail.com}
\author[Muhammad Ishaq]{Muhammad Ishaq}
\address{Muhammad Ishaq, School of Natural Sciences, National University of Sciences and Technology Islamabad, Sector H-12, Islamabad Pakistan.}
\email{ishaq\_maths@yahoo.com}
\begin{document}
	\maketitle
 \begin{abstract}
In this paper, we compute the exact values of regularity of the quotient rings of the edge ideals associated to multi-triangular snake and multi-triangular ouroboros snake graphs. We also compute the exact values of depth, Stanley depth, regularity and projective dimension of the quotient rings of the edge ideals associated to $q$-fold bristled graphs of multi-triangular snake and multi-triangular ouroboros snake graphs. \\\\
		\textbf{Key Words:} Monomial ideal, edge ideal, depth, Stanley depth, regularity, projective dimension, $q$-fold bristled graph, multi-triangular snake graph, multi-triangular ouroboros snake graph.\\\\
\textbf{2020 Mathematics Subject Classification:} Primary: 13C15, Secondary: 13F20,
05C38, 05E99
\end{abstract}
\section{Introduction}
Let $S:=K[x_{1},x_{2},\dots,x_{n}]$ be a polynomial ring over a field $K$ with standard grading,
that is, $\deg(x_i)=1$, for all $i$. Let $N$ be a finitely generated graded $S$-module. Suppose that $N$ admits the following minimal free resolution:
	$$0\longrightarrow\ \bigoplus_{j \in \mathbb{Z}} S(-j)^{\beta_{t,j}(N)} \xrightarrow{\,\,\psi_{t} \,} \bigoplus_{j \in \mathbb{Z}}  S(-j)^{\beta_{t-1,j}(N)}
	\longrightarrow\ \dots \longrightarrow\  \bigoplus_{j \in \mathbb{Z}} S(-j)^{\beta_{0,j}(N)}\xrightarrow{\,\,\psi_{0} \,} N \longrightarrow\ 0,$$
  the \textit{Castelnuovo-Mumford regularity} (or simply \textit{regularity}) and \textit{projective dimension} of $N$ are defined by $\reg(N)=\max\{j-i:\beta_{i,j}(N)\neq 0\}$ and	$\pdim(N)=\max\{i:\beta_{i,j}(N)\neq 0\},$ respectively.
 The \textit{depth} of $N$ is an algebraic invariant that is defined to be the common length of all maximal $N$-sequences in graded maximal ideal $(x_1,x_2,\dots,x_n)$, we denote the depth of $N$ by $\depth(N)$. Let $N$ be a $\ZZ^n$-graded module over $\ZZ^n$-graded ring $S$. Let $mK[A]$ denotes a $K$-subspace which is generated by all the elements of the form $mv$, where $m$ is a homogeneous element in $N$, $v$ is a monomial in $K[A]$ and $A\subseteq \{x_{1},x_{2},\dots,x_{n}\}$. If $mK[A]$ is a free $K[A]$-module then it is called a Stanley space of dimension $|A|$. A decomposition $\mathcal{D}$ of $K$-vector spaces $N$ as a finite direct sum of Stanley spaces is called a Stanley decomposition of $N$. Let $\mathcal{D} : N=\bigoplus^t_{i=1}m_{i}K[A_{i}],$ the Stanley depth of $\mathcal{D}$ is $\sdepth(\mathcal{D})=\min\{|A_{i}| \colon i=1,2,\dots,t\}$ and the number
	$\sdepth(N)=\max\{\sdepth(\mathcal{D}) \colon \text{$\mathcal{D}$ is a Stanley decomposition of $N$}\},$
	is called the \textit{Stanley depth} of $N$.  For existing literature related to regularity, projective dimension, depth and Stanley depth, we refer the reader to \cite{BR,reg,FS,HJ,PM,UV,VRH}. In 1982, it was conjectured by Stanley \cite{SRP} that for every $\ZZ^n$-graded $\mathit{S}$-module $N$, $\sdepth(N)\geq\depth(N)$. This conjecture was proved in some special cases but in $2016$, Duval et al. in \cite{DG} disproved this conjecture by providing a counterexample. For some recent results related to the said invariants we refer the reader to \cite{IAI,IZ1,IZ2,IZ3,AR1}. 
 
 In this paper, we compute the exact values for regularity of the quotient rings of edge ideals associated to multi-triangular snake and multi-triangular ouroboros snake graphs; see Theorem \ref{Th1} and Theorem \ref{Th3}. We also compute the exact values for depth, Stanley depth, projective dimension and regularity of the quotient rings of edge ideals associated to $q$-fold bristled graphs of multi-triangular snake and multi-triangular ouroboros snake graphs; see Theorem \ref{Th2} and Theorem \ref{Th4}. 
	
 \section{Definitions and Notations}
	Let $G:=(V(G),E(G))$ be a graph with edge set $E(G)$ and vertex set $V(G):=\{x_1,x_2,\dots,x_n\}$. All graphs considered in this paper are simple and undirected. The \textit{edge ideal} $I(G)$ associated to $G$ is a square free monomial ideal of $S$, that is
	$I(G)=(x_{i}x_{j} : \{x_i, x_j\}\in E(G)).$ The minimal set of monomial generators of a monomial ideal $I \subset S$ is denoted by $\mathcal{G}(I)$.  For any monomial $\gamma$, $\supp(\gamma) := \{x_i : x_i |\gamma\}$
	and for a monomial ideal $\mathit{I}$, $\supp(I) := \{x_i : x_i |w,\, \text{for some}\, w \in \mathcal{G}(I)\}.$ The graph whose edge set is empty is called a \textit{null graph}. The graph $G$ is called a \textit{path} if $E(G)=\{\{x_j,x_{j+1}\}:1\leq j\leq n-1\}$, and $G$ is called a \textit{cycle} if $E(G)=\{\{x_j,x_{j+1}\}:1\leq j\leq n-1\}\cup\{\{x_1,x_n\}\}$. A path and a cycle on $n$ vertices are usually denoted by $P_n$ and $C_n$, respectively. A vertex of degree $1$ of a graph is called a \textit{pendant vertex} (\textit{or leaf}). An \textit{internal vertex} is a vertex that is not a pendant vertex.
 
 A \textit{triangular snake} $\mathcal{T}_{n}$ \cite{WT} is a connected graph obtained from a path on vertices $x_1,x_2,\dots,x_{n+1}$ by joining $x_j$ and $x_{j+1}$ to a new vertex $y_j$ for $1\leq j\leq {n}$. In other words, $\mathcal{T}_{n}$ is formed by replacing each edge of $P_{n+1}$ by a triangle $C_3$. A \textit{$p$-triangular snake} $\mathcal{T}_{n}(p)$  (triangular if $p=1$ and multi-triangular if $p\geq2$) consists of $p$ number of triangular snakes that have a common path; see \cite{PRN,SP}. Two vertices $x_1$ and $x_2$ in a graph $G$ are said to be \textit{fused} if $x_1$
	and $x_2$ are replaced by a single new vertex $x$, such that, each edge that was adjacent to either $x_1$ or $x_2$ or both, is adjacent to $x$.
	If we fuse vertices $x_1$ and $x_{n+1}$ in $\mathcal{T}_{n}(p)$, we get a new graph denoted $\mathcal{O}_{n}(p)$ \cite{SMM}, which is known as \textit{$p$-triangular ouroboros snake} (triangular if $p=1$ and multi-triangular if $p\geq2$). We require $n\geq3$ for this construction to be well-defined: if $n=1$, the fusion of $x_1$ and $x_2$ in $\mathcal{T}_1(p)$ would create the edge $\{x_1,x_1\}$, a loop, contradicting the assumption that all graphs are simple. If $n=2$, the fused graph degenerates similarly. Hence throughout this paper $\mathcal{O}_n(p)$ is defined only for $n\geq3$. See Figure \ref{fig:123} for examples of triangular snake, multi-triangular snake and multi-triangular ouroboros snake graphs.
 
 The \textit{corona product} \cite{RF} of two graphs $G$ and $H$ is obtained by taking one copy of $G$ and $\mid V(G)\mid$ copies of $H$ and joining each vertex of the $i^{th}$ copy of $H$ to the $i^{th}$ vertex of $G$, where $1 \leq i \leq \mid V(G)\mid$. For a given graph $G$, its \textit{$q$-fold bristled graph} denoted $\brs_{q}(G)$ is obtained by joining $q$ new vertices to each vertex of $G$; see \cite{KP}. This graph can also be obtained by taking corona product of $G$ with null graph on $q$ vertices. The $q$-fold bristled graph of a given graph is also known as its $q$-thorny graph.
\begin{figure}[H]
	\begin{subfigure}[b]{0.3\textwidth}
		\centering
				\[\begin{tikzpicture}[x=0.5cm, y=0.5cm]
			\vertex[fill] (1) at (0,0) {};
			\vertex[fill] (2) at (2,0) {};
			\vertex[fill] (3) at (4,0) {};
			\vertex[fill] (4) at (6,0) {};
			\vertex[fill] (5) at (1,1) {};
			\vertex[fill] (6) at (3,1) {};
			\vertex[fill] (7) at (5,1) {};
			\path 
			(1) edge (2)
			(3) edge (2)
			(3) edge (4)
			(1) edge (5)
			(6) edge (2)
			(3) edge (7)
			(5) edge (2)
			(3) edge (6)
			(7) edge (4)
			;
			\end{tikzpicture}\]
	\end{subfigure}
	\hfil
	\begin{subfigure}[b]{0.3\textwidth}
	\centering

 \[\begin{tikzpicture}[x=0.5cm, y=0.5cm]
	\vertex[fill] (1) at (0,0) {};
	\vertex[fill] (2) at (2,0) {};
	\vertex[fill] (3) at (4,0) {};
	\vertex[fill] (4) at (6,0) {};
	\vertex[fill] (5) at (1,1) {};
	\vertex[fill] (6) at (3,1) {};
	\vertex[fill] (7) at (5,1) {};
	\vertex[fill] (8) at (1,2) {};
	\vertex[fill] (9) at (3,2) {};
	\vertex[fill] (10) at (5,2) {};
	\vertex[fill] (11) at (1,3){};
	\vertex[fill] (12) at (3,3) {};
	\vertex[fill] (13) at (5,3) {};
	\path
	(1) edge (2)
	(3) edge (2)
	(3) edge (4)
	(1) edge (5)
	(6) edge (2)
	(3) edge (7)
	(5) edge (2)
	(3) edge (6)
	(7) edge (4)
	(1) edge (8)
	(2) edge (8)
	(2) edge (9)
	(3) edge (9)
	(4) edge (10)
	(3) edge (10)
	(1) edge (11)
	(2) edge (11)
	(2) edge (12)
	(3) edge (12)
	(3) edge (13)
	(4) edge (13)
	
	;

	\end{tikzpicture}\]
\end{subfigure}
\hfill
	\begin{subfigure}[b]{0.3\textwidth}
		\centering
		\[\begin{tikzpicture}[x=0.3cm, y=0.3cm]
	\vertex[fill] (1) at (0,0) {};
	\vertex[fill] (2) at (2,0){};
	\vertex[fill] (3) at (2,-2){};
	\vertex[fill] (4) at (0,-2) {};
	\vertex[fill] (5) at (1,2) {};
	\vertex[fill] (6) at (4,-1) {}; 
	\vertex[fill] (7) at (1,-4) {};
	\vertex[fill] (8) at (-2,-1) {};
	\vertex[fill] (9) at (6,-1) {};
	\vertex[fill] (10) at (-4,-1) {};
	\vertex[fill] (11) at (1,4) {};
	\vertex[fill] (12) at (1,-6) {};
	\path
	(1) edge (2)
	(2) edge (3)
	(3) edge (4)
	(1) edge (4)
	(1) edge (5)
	(2) edge (5)
	(2) edge (6)
	(3) edge (6)
	(3) edge (7)
	(4) edge (7)
	(4) edge (8)
	(1) edge (8)
	(1) edge (11)
	(2) edge (11)
	(2) edge (9)
	(3) edge (9)
	(3) edge (12)
	(4) edge (12)
	(1) edge (10)
	(4) edge (10)	
	;
	\end{tikzpicture}\]
	\end{subfigure}
	\caption{From left to right $\mathcal{T}_3$, $\mathcal{T}_3(3)$ and  $\mathcal{O}_4(2)$ }
	\label{fig:123}
\end{figure}
Let $n,p,q\geq1$. The $q$-fold bristled graphs of $p$-triangular snake graph and $p$-triangular ouroboros snake graph are denoted by $\brs_q(\mathcal{T}_n(p))$ and $\brs_q(\mathcal{O}_n(p))$, respectively. See Figure \ref{fig:12} for examples of  $\brs_q(\mathcal{T}_n(p))$ and $\brs_q(\mathcal{O}_n(p))$.
	\begin{Theorem}[{\cite[Theorems 4.7]{reg}}]\label{reg}
		{\em
			Let $ J $ be a monomial ideal and $ y $ a variable of $S$. Then
			
			\begin{itemize}
				\item [(a)]  $ \reg (S/J) = \reg S/(J:y)+1$, if $ \reg (S/J:y) > \reg S/(J,y).$
				\item[(b)] 	  $\reg (S/J)\in \{\reg S/(J,y)+1,\reg S/(J,y)\},$ if $ \reg (S/J:y) = \reg S/(J,y).$
				\item[(c)] 	  $\reg (S/J)= \reg (S/J,y)$ if $ \reg (S/J:y) < \reg S/(J,y).$
			\end{itemize}
		}	
	\end{Theorem}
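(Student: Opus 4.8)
The plan is to read off all three statements from the single graded short exact sequence attached to the variable $y$, combined with the standard comparison of Castelnuovo--Mumford regularity along short exact sequences. First I would record the sequence
$$0 \longrightarrow (S/(J:y))(-1) \xrightarrow{\;\cdot y\;} S/J \xrightarrow{\;\pi\;} S/(J,y) \longrightarrow 0,$$
where $\pi$ is the natural surjection and the left-hand map is multiplication by $y$. Exactness holds because $\ker \pi = (J,y)/J$ and the assignment $\bar g \mapsto \overline{yg}$ furnishes a degree-raising isomorphism $S/(J:y) \xrightarrow{\sim} (J,y)/J$, which is what dictates the twist. The one piece of bookkeeping to keep straight is the internal degree: the shift $(-1)$ forces $\reg\big((S/(J:y))(-1)\big) = \reg(S/(J:y)) + 1$.

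Next I would invoke the three inequalities valid for any short exact sequence $0 \to A \to B \to C \to 0$ of finitely generated graded modules, namely $\reg A \le \max\{\reg B, \reg C + 1\}$, $\reg B \le \max\{\reg A, \reg C\}$, and $\reg C \le \max\{\reg A - 1, \reg B\}$. Abbreviating $a = \reg(S/(J:y))$, $b = \reg(S/J)$, $c = \reg(S/(J,y))$ and substituting $\reg A = a+1$, these read $b \le \max\{a+1, c\}$, $a+1 \le \max\{b, c+1\}$, and $c \le \max\{a, b\}$.

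The case analysis is then routine for (a) and (c). In case (a), where $a > c$, the first inequality gives $b \le a+1$; since $a+1 > c+1$, the second inequality forces $\max\{b, c+1\} = b$, hence $b \ge a+1$ and $b = a+1$. In case (c), where $a < c$, the first inequality gives $b \le c$ (as $a+1 \le c$), while the third forces $b \ge c$ (as $c > a$), so $b = c$. One may package these together by the sharper observation that whenever $\reg A \ne \reg C + 1$, equivalently $a \ne c$, the connecting maps in the $\Tor$ long exact sequence cannot cancel the extremal classes, yielding the exact equality $b = \max\{a+1, c\}$.

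The remaining case (b), $a = c$, is where I expect the real obstacle to sit. The first inequality still gives $b \le \max\{a+1, c\} = c+1$, but here $\reg A = c+1 = \reg C + 1$ is exactly the degenerate configuration in which the extremal $\Tor$-class of $C$ may be annihilated by the connecting homomorphism; consequently the three generic inequalities supply no lower bound on $b$ and cannot by themselves exclude $b < c$. To close the gap I would prove separately that passing from $J$ to $(J,y)$ never raises the regularity of the quotient, i.e. $\reg(S/(J,y)) \le \reg(S/J)$, that is $c \le b$; together with $b \le c+1$ this gives $b \in \{c, c+1\}$ and finishes (b). For the squarefree ideals governing edge ideals this inequality is transparent from Hochster's formula, since $\reg(S/(J,y))$ is computed by the reduced homology of the restrictions $\Delta_W$ with $y \notin W$, a subfamily of those computing $\reg(S/J)$; the general monomial case then follows by polarization.
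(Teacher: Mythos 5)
The paper does not prove this statement at all --- it is quoted verbatim as \cite[Theorem 4.7]{reg}, so there is no internal proof to compare against. Your argument is correct and is essentially the standard one behind the cited result: the twisted exact sequence $0 \to (S/(J:y))(-1) \to S/J \to S/(J,y) \to 0$ plus the three regularity inequalities settle cases (a) and (c), and you correctly identify that case (b) needs the extra input $\reg(S/(J,y)) \le \reg(S/J)$, which is genuinely special to monomial ideals (it can fail for arbitrary homogeneous ideals and a non-generic linear form) and which your Hochster-formula-plus-polarization argument establishes. The only caveat worth recording is that this last inequality is itself a named lemma in the cited source, so your proof is self-contained only modulo that reduction, which you carry out correctly.
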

	\begin{Lemma} [{\cite[Lemma 8]{woodroof}}]\label{circulentt}
		{\em
			Let $ H_1 $  and $ H_2 $ be two disjoint graphs and $ G = H_1\union H_2 $. Then 	
			$	\reg(S/I(G)) = \reg(S/I(H_1)) + \reg(S/I(H_2)).$
		}
	\end{Lemma}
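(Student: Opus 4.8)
The plan is to exploit the fact that a disjoint union of graphs corresponds, on the level of edge ideals, to a tensor product of quotient rings over $K$, and then to read off the regularity from the resulting K\"unneth formula for graded Betti numbers. Write $V(H_1)=\{x_1,\dots,x_r\}$ and $V(H_2)=\{x_{r+1},\dots,x_n\}$, and set $S_1:=K[x_1,\dots,x_r]$ and $S_2:=K[x_{r+1},\dots,x_n]$, so that $S=S_1\otimes_K S_2$. Since $H_1$ and $H_2$ are vertex-disjoint, every edge of $G$ lies entirely in $H_1$ or entirely in $H_2$; hence $I(G)=I(H_1)S+I(H_2)S$, where $I(H_i)$ is the edge ideal of $H_i$ inside $S_i$. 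This yields a graded $K$-algebra isomorphism $S/I(G)\cong (S_1/I(H_1))\otimes_K(S_2/I(H_2))$.

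First I would pass to minimal graded free resolutions. Let $F_\bullet$ be the minimal graded free resolution of $S_1/I(H_1)$ over $S_1$ and let $G_\bullet$ be that of $S_2/I(H_2)$ over $S_2$. Because $K$ is a field every module in sight is $K$-flat, so by a K\"unneth argument $F_\bullet\otimes_K G_\bullet$ is a complex of free $S$-modules with homology $(S_1/I(H_1))\otimes_K(S_2/I(H_2))$ concentrated in degree $0$; moreover all entries of its total differential lie in $\mathfrak m_S=(x_1,\dots,x_n)$ (they come from entries of $d_F$ in $(x_1,\dots,x_r)$ and of $d_G$ in $(x_{r+1},\dots,x_n)$), so it carries no unit entries and is therefore the \emph{minimal} graded free resolution of $S/I(G)$ over $S$. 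Invoking that adjoining polynomial variables leaves Betti numbers unchanged, so that $\beta^{S_i}$ and $\beta^{S}$ of $S_i/I(H_i)$ agree, the graded Betti numbers obey the K\"unneth identity
\[
\beta_{i,j}^{S}(S/I(G))=\sum_{\substack{i_1+i_2=i\\ j_1+j_2=j}}\beta_{i_1,j_1}^{S_1}(S_1/I(H_1))\,\beta_{i_2,j_2}^{S_2}(S_2/I(H_2)).
\]

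The regularity then drops out by a direct support computation. All Betti numbers are nonnegative, so $\beta_{i,j}^{S}(S/I(G))\neq 0$ precisely when there is a splitting $i=i_1+i_2$, $j=j_1+j_2$ with $\beta_{i_1,j_1}^{S_1}(S_1/I(H_1))\neq 0$ and $\beta_{i_2,j_2}^{S_2}(S_2/I(H_2))\neq 0$. Since $j-i=(j_1-i_1)+(j_2-i_2)$ and the two pairs $(i_1,j_1)$ and $(i_2,j_2)$ range over their admissible sets independently, maximizing $j-i$ over the nonzero Betti numbers of $S/I(G)$ is the same as maximizing each summand separately, whence
\[
\reg(S/I(G))=\reg(S_1/I(H_1))+\reg(S_2/I(H_2))=\reg(S/I(H_1))+\reg(S/I(H_2)).
\]

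The main obstacle, and the only point requiring genuine care, is the structural claim that the tensor product of the two minimal resolutions is again minimal and resolves $S/I(G)$: this is where the exactness (a K\"unneth statement, using that over a field there are no $\Tor$ corrections) and the invariance of Betti numbers under the flat extensions $S_i\hookrightarrow S$ must both be verified. Once that fact is secured, the additivity of regularity is purely combinatorial. As an alternative to the homological route, one could instead argue inductively via Theorem \ref{reg}, deleting one variable $y\in V(H_1)$ at a time and tracking $(J:y)$ and $(J,y)$; but the tensor-product argument is cleaner and avoids the attendant case analysis.
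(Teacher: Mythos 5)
The paper does not prove this lemma at all: it is quoted verbatim from Woodroofe \cite[Lemma 8]{woodroof}, so there is no internal proof to compare against. Your argument is correct and is essentially the standard proof of this fact. The decomposition $S/I(G)\cong (S_1/I(H_1))\otimes_K(S_2/I(H_2))$ is valid because the two graphs are vertex-disjoint, the K\"unneth theorem over the field $K$ shows that the total complex of $F_\bullet\otimes_K G_\bullet$ is acyclic with $H_0=(S_1/I(H_1))\otimes_K(S_2/I(H_2))$, and minimality is clear since every entry of the total differential is an entry of $d_F$ or of $d_G$ and hence lies in $\mathfrak m_S$; the convolution formula for graded Betti numbers and the observation that $j-i=(j_1-i_1)+(j_2-i_2)$ maximizes termwise then give additivity of regularity. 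The one point you gesture at but should make explicit is the passage from $\reg(S_i/I(H_i))$ to $\reg(S/I(H_i))$ as written in the statement: this follows from the invariance of regularity under adjoining free variables (Lemma \ref{ref6}(b) in this paper), which you correctly invoke via the invariance of Betti numbers under the faithfully flat extension $S_i\hookrightarrow S$. Your closing remark that one could instead induct via Theorem \ref{reg} is true but would be considerably messier; the tensor-product route is the right one and matches how this result is usually established in the literature.
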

	
	\begin{Lemma}[{\cite{herzog}}](Depth Lemma)\label{depth}
		{\em
If $0\rightarrow {P}_{1} \rightarrow {P}_{2} \rightarrow {P}_{3} \rightarrow 0$ is a short exact sequence of modules over a local ring $S$, or a Noetherian graded ring with $S_{0}$ local, then
			\begin{enumerate}
				\item[(a)] $\depth ({P}_{2}) \geq \min\{\depth({P}_{3}), \depth({P}_{1})\}$.
				\item[(b)] $\depth ({P}_{3}) \geq \min\{\depth({P}_{2}), \depth({P}_{1}) + 1\}$.
				\item[(c)] $\depth ({P}_{1}) \geq \min\{\depth({P}_{3})-1, \depth({P}_{2})\}$.
			\end{enumerate}
		}
	\end{Lemma}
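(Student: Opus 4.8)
The plan is to derive all three inequalities from a single long exact sequence together with the cohomological description of depth. Here $P_1,P_2,P_3$ are finitely generated (graded) modules and depth is measured against the graded maximal ideal $\mm=(x_1,\dots,x_n)$, with residue field $K=S/\mm$. The essential input I would invoke is the characterization
$$\depth(M)=\min\{\,i:\Ext^i_S(K,M)\neq 0\,\},$$
with the convention $\depth(0)=+\infty$, which identifies the depth of a module with the first degree in which $\Ext$ against the residue field fails to vanish. This is the dictionary that converts exactness of a sequence of $\Ext$-modules into numerical bounds on depth.

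First I would apply $\Hom_S(K,-)$ to the short exact sequence $0\to P_1\to P_2\to P_3\to 0$ and record the associated long exact sequence
$$\cdots\to\Ext^i_S(K,P_1)\to\Ext^i_S(K,P_2)\to\Ext^i_S(K,P_3)\xrightarrow{\,\delta\,}\Ext^{i+1}_S(K,P_1)\to\cdots,$$
writing $d_j:=\depth(P_j)$, so that $\Ext^i_S(K,P_j)=0$ for all $i<d_j$ by the characterization above. Each part then follows from the same \emph{squeezing} principle: in an exact sequence, if the two neighbours of a term both vanish, so does that term. For (a), when $i<\min\{d_1,d_3\}$ both $\Ext^i_S(K,P_1)$ and $\Ext^i_S(K,P_3)$ vanish, pinching the middle term $\Ext^i_S(K,P_2)$ to zero and giving $\depth(P_2)\geq\min\{d_1,d_3\}$. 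For the bound on the submodule $P_1$ one instead reads off the segment $\Ext^{i-1}_S(K,P_3)\to\Ext^i_S(K,P_1)\to\Ext^i_S(K,P_2)$ and uses vanishing of its two ends; for the bound on the quotient $P_3$ one uses the segment $\Ext^i_S(K,P_2)\to\Ext^i_S(K,P_3)\to\Ext^{i+1}_S(K,P_1)$. In each case the threshold below which the target $\Ext$ is forced to vanish is read directly from the degrees of the controlling neighbours.

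The three diagram chases are mechanical once the long exact sequence is in hand, so the real content lies elsewhere. The genuine obstacle is the depth--$\Ext$ characterization itself: proving $\depth(M)=\min\{i:\Ext^i_S(K,M)\neq0\}$ takes honest work, either by induction along a maximal regular sequence via the Koszul complex, or equivalently through local cohomology $H^i_{\mm}(M)$, and it is precisely this result that licenses the passage from exactness to depth. The one place demanding care in the chases is the degree shift introduced by the connecting homomorphism $\delta$: the $\pm1$ attached to the $P_1$- and $P_3$-terms originates from this shift, and I would pin it down by testing the sequence $0\to\mm\to S\to K\to 0$ over $S=K[x,y]$, where $\depth\mm=1$, $\depth S=2$ and $\depth K=0$, to confirm that each shift sits on the correct side before recording the final inequalities.
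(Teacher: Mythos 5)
The paper offers no proof of this lemma at all --- it is quoted from the literature --- so there is no internal argument to compare against; your route via Rees's characterization $\depth(M)=\min\{i:\Ext^i_S(K,M)\neq 0\}$, the long exact sequence of $\Ext$ against the residue field, and the squeezing principle is the standard proof, and every step you describe is sound for finitely generated (graded) modules. Part (a) comes out exactly as stated.

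The problem is with (b) and (c) as printed, and your own bookkeeping exposes it. The segment $\Ext^{i-1}_S(K,P_3)\to\Ext^i_S(K,P_1)\to\Ext^i_S(K,P_2)$ forces $\Ext^i_S(K,P_1)=0$ whenever $i<\min\{\depth(P_2),\,\depth(P_3)+1\}$, so the \emph{submodule} satisfies $\depth(P_1)\geq\min\{\depth(P_2),\depth(P_3)+1\}$; and the segment $\Ext^i_S(K,P_2)\to\Ext^i_S(K,P_3)\to\Ext^{i+1}_S(K,P_1)$ forces $\Ext^i_S(K,P_3)=0$ whenever $i<\min\{\depth(P_2),\,\depth(P_1)-1\}$, so the \emph{quotient} satisfies $\depth(P_3)\geq\min\{\depth(P_2),\depth(P_1)-1\}$. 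These are the classical inequalities (Bruns--Herzog, Prop.\ 1.2.9; Villarreal, Lemma 1.3.9), but they are not (b) and (c) as printed: there the subscripts $1$ and $3$ have been interchanged. As printed, (c) is a true but weakened form of the correct bound, while (b) is actually false --- the very test you propose, $0\to\mm\to S\to K\to 0$ over $S=K[x,y]$, refutes it, since $\depth(K)=0$ whereas $\min\{\depth(S),\depth(\mm)+1\}=\min\{2,2\}=2$. So carry that test through and record the corrected inequalities; your method cannot, and should not, yield (b) in its printed form. (The paper only ever invokes part (a), which is unaffected by this.)
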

 \begin{figure}[H]
		\begin{subfigure}[b]{0.45\textwidth}
			\centering
			\[\begin{tikzpicture}[x=0.45cm, y=0.45cm]
			\vertex[fill] (1) at (0,0) {};
			\vertex[fill] (2) at (4,0) {};
			\vertex[fill] (3) at (8,0) {};
			\vertex[fill] (4) at (12,0) {};
			\vertex[fill] (5) at (2,2)  {};
			\vertex[fill] (6) at (6,2) {};
			\vertex[fill] (7) at (10,2) {};
			\vertex[fill] (8) at (2,4) {};
			\vertex[fill] (9) at (6,4) {};
			\vertex[fill] (10) at (10,4) {};
			\vertex[fill] (11) at (2,6) {};
			\vertex[fill] (12) at (6,6) {};
			\vertex[fill] (13) at (10,6) {};
			\vertex[fill] (14) at (2,0.5) {};
			\vertex[fill] (15) at (1.5,0.5) {};
			\vertex[fill] (16) at (2.5,0.5){};
			\vertex[fill] (17) at (2,2.5) {};
			\vertex[fill] (18) at (1.5,2.5){}; 	\vertex[fill] (20) at (2.5,2.5) {};
			\vertex[fill] (21) at (2,7.5) {}; 
			\vertex[fill] (22) at (1.5,7.5) {};
			\vertex[fill] (23) at (2.5,7.5) {};
			\vertex[fill] (24) at (6,0.5) {};
			\vertex[fill] (25) at (5.5,0.5) {};
			\vertex[fill] (26) at (6.5,0.5) {};
			\vertex[fill] (27) at (10,0.5) {};
			\vertex[fill] (28) at (9.5,0.5) {};
			\vertex[fill] (29) at (10.5,0.5) {};
			\vertex[fill] (30) at (6,2.5) {};
			\vertex[fill] (31) at (5.5,2.5) {};
			\vertex[fill] (32) at (6.5,2.5) {};
			\vertex[fill] (33) at (10,2.5) {};
			\vertex[fill] (34) at (9.5,2.5) {};
			\vertex[fill] (35) at (10.5,2.5) {};
			\vertex[fill] (36) at (6,7.5) {};
			\vertex[fill] (37) at (5.5,7.5) {};
			\vertex[fill] (38) at (6.5,7.5) {};
			\vertex[fill] (39) at (10,7.5) {};
			\vertex[fill] (40) at (9.5,7.5) {};
			\vertex[fill] (41) at (10.5,7.5) {};
			\vertex[fill] (42) at (0,-1.5) {};
			\vertex[fill] (43) at (-0.5,-1.5) {};
			\vertex[fill] (44) at (0.5,-1.5) {};
			\vertex[fill] (45) at (3.5,-1.5) {};
			\vertex[fill] (46) at (4,-1.5) {};
			\vertex[fill] (47) at (4.5,-1.5) {};
			\vertex[fill] (48) at (7.5,-1.5) {};
			\vertex[fill] (49) at (8,-1.5) {};
			\vertex[fill] (50) at (8.5,-1.5) {};
			\vertex[fill] (51) at (11.5,-1.5) {};
			\vertex[fill] (52) at (12,-1.5) {};
			\vertex[fill] (53) at (12.5,-1.5) {};
			
			\path 
			(1) edge (2)
			(3) edge (2)
			(3) edge (4)
			(1) edge (5)
			(6) edge (2)
			(3) edge (7)
			(5) edge (2)
			(3) edge (6)
			(7) edge (4)
			(1) edge (8)
			(2) edge (8)
			(2) edge (9)
			(3) edge (9)
			(3) edge (10)
			(4) edge (10)
			(1) edge (11)
			(2) edge (11)
			(2) edge (12)
			(3) edge (12)
			(3) edge (13)
			(4) edge (13)
			(5) edge (14)
			(5) edge (15)
			(5) edge (16)
			(8) edge (17)
			(8) edge (18)
			(8) edge (20)
			(11) edge (21)
			(11) edge (23)
			(11) edge (22)
			(6) edge (24)
			(6) edge (25)
			(6) edge (26)
			(7) edge (27)
			(7) edge (28)
			(7) edge (29)
			(9) edge (30)
			(9) edge (31)
			(9) edge (32)
			(10) edge (33)
			(10) edge (34)
			(10) edge (35)
			(12) edge (36)
			(12) edge (37)
			(12) edge (38)
			(13) edge (39)
			(13) edge (40)
			(13) edge (41)
			(1) edge (42)
			(1) edge (43)
			(1) edge (44)
			(2) edge (45)
			(2) edge (46)
			(2) edge (47)
			(3) edge (48)
			(3) edge (49)
			(3) edge (50)
			(4) edge (51)
			(4) edge (52)
			(4) edge (53)
			;
			\end{tikzpicture}\]
		\end{subfigure}
		\hfill
	\begin{subfigure}[b]{0.5\textwidth}
			\centering
			\[\begin{tikzpicture}[x=1.1cm, y=1.1cm]
			\vertex[fill] (1) at (0,0) {};
			\vertex[fill] (2) at (1,0) {};
			\vertex[fill] (3) at (0,1) {};
			\vertex[fill] (4) at (1,1) {};
			\vertex[fill] (5) at (2,0.5) {};
			\vertex[fill] (6) at (3,0.5) {};	\vertex[fill] (7) at (-1,0.5) {};
			\vertex[fill] (8) at (-2,0.5) {};
			\vertex[fill] (9) at (0.5,2) {};
			\vertex[fill] (10) at (0.5,-1) {};	\vertex[fill] (11) at (0.5,-2) {};
			\vertex[fill] (12) at (0.5,3) {};
			\vertex[fill] (13) at (1.3,0.3) {};
			\vertex[fill] (14) at (1.3,0.5) {};
			\vertex[fill] (15) at (1.3,0.7) {};
			\vertex[fill] (16) at (3.7,0.3) {};
			\vertex[fill] (17) at (3.7,0.5) {};
			\vertex[fill] (18) at (3.7,0.7) {};
			\vertex[fill] (19) at (0.3,1.3) {};
			\vertex[fill] (20) at (0.5,1.3) {};
			\vertex[fill] (21) at (0.7,1.3) {};
			\vertex[fill] (22) at (-0.3,0.3) {};
			\vertex[fill] (23) at (-0.3,0.5) {};
			\vertex[fill] (24) at (-0.3,0.7) {};
			\vertex[fill] (25) at (-2.7,0.3) {};
			\vertex[fill] (26) at (-2.7,0.5) {};
			\vertex[fill] (27) at (-2.7,0.7) {};
			\vertex[fill] (28) at (0.3,-0.3) {};
			\vertex[fill] (29) at (0.5,-0.3) {};
			\vertex[fill] (30) at (0.7,-0.3) {};
			\vertex[fill] (31) at (0.3,-2.7) {};
			\vertex[fill] (32) at (0.5,-2.7) {};
			\vertex[fill] (33) at (0.7,-2.7) {};
			\vertex[fill] (34) at (0.3,3.7) {};
			\vertex[fill] (35) at (0.5,3.7) {};
			\vertex[fill] (36) at (0.7,3.7) {};
			\vertex[fill] (37) at (1.65,1.65) {};
			\vertex[fill] (38) at (1.5,1.8) {};
			\vertex[fill] (39) at (1.8,1.5) {};
			\vertex[fill] (40) at (-0.65,1.65) {};
			\vertex[fill] (41) at (-0.5,1.8) {};
			\vertex[fill] (42) at (-0.8,1.5) {};
			\vertex[fill] (43) at (-0.65,-0.65) {};
			\vertex[fill] (44) at (-0.8,-0.5) {};
			\vertex[fill] (45) at (-0.5,-0.8) {};
			\vertex[fill] (46) at (1.5,-0.8) {};
			\vertex[fill] (47) at (1.8,-0.5) {};
			\vertex[fill] (48) at (1.65,-0.65) {};
			\path 
			(1) edge (2)
			(2) edge (4)
			(3) edge (4)
			(1) edge (3)
			(1) edge (10)
			(2) edge (10)
			(1) edge (11)
			(2) edge (11)
			(2) edge (5)
			(4) edge (5)
			(2) edge (6)
			(4) edge (6)
			(3) edge (9)
			(4) edge (12)
			(3) edge (12)
			(4) edge (9)
			(1) edge (7)
			(3) edge (8)
			(1) edge (8)
			(3) edge (7)
			(9) edge (19)
			(9) edge (20)
			(9) edge (21)
			(5) edge (13)
			(5) edge (14)
			(5) edge (15)
			(10) edge (28)
			(10) edge (29)
			(10) edge (30)
			(7) edge (22)
			(7) edge (23)
			(7) edge (24)
			(12) edge (34)
			(12) edge (35)
			(12) edge (36)
			(6) edge (18)
			(6) edge (16)
			(6) edge (17)
			(11) edge (31)
			(11) edge (32)
			(11) edge (33)
			(8) edge (25)
			(8) edge (26)
			(8) edge (27)
			(2) edge (46)
			(2) edge (47)
			(2) edge (48)
			(1) edge (44)
			(1) edge (45)
			(1) edge (43)
			(3) edge (40)
			(3) edge (41)
			(3) edge (42)
			(4) edge (37)
			(4) edge (38)
			(4) edge (39)
			;
			\end{tikzpicture}\]
		\end{subfigure}
	\caption{From left to right $\brs_3(\mathcal{T}_3(3))$  and  $\brs_3(\mathcal{O}_4(2))$ }
		\label{fig:12}
	\end{figure}
	\begin{Lemma} [{\cite[Lemma 2.2]{AR1}}]\label{sdepth} 
		{\em
		For a short exact sequence $0\rightarrow {P}_1\rightarrow {P}_2\rightarrow {P}_3\rightarrow 0$ of $\ZZ^{n}$-graded  ${S}$-modules, we have
			\begin{equation*}
			\sdepth({P}_2)\geq\min\{\sdepth({P}_1),\sdepth({P}_3)\}.
			\end{equation*}
		}
	\end{Lemma}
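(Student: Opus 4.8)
The plan is to prove the inequality by exhibiting a single Stanley decomposition of $P_2$ whose Stanley depth is at least $\min\{\sdepth(P_1),\sdepth(P_3)\}$; since $\sdepth(P_2)$ is the maximum over all Stanley decompositions, this suffices. Identify $P_1$ with its image in $P_2$ under the injection, so that $P_3\cong P_2/P_1$, and let $\pi\colon P_2\to P_3$ be the surjection. First I would fix a Stanley decomposition $\mathcal{D}_1\colon P_1=\bigoplus_{i}u_iK[A_i]$ achieving $\sdepth(\mathcal{D}_1)=\sdepth(P_1)$, and a Stanley decomposition $\mathcal{D}_3\colon P_3=\bigoplus_{j}\bar v_jK[B_j]$ achieving $\sdepth(\mathcal{D}_3)=\sdepth(P_3)$, where each $\bar v_j=\pi(v_j)$ for some homogeneous $v_j\in P_2$. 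All maps here are $\ZZ^n$-graded and degree preserving, so the construction respects the grading throughout.

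The heart of the argument is the claim that
\[
P_2=\Bigl(\bigoplus_i u_iK[A_i]\Bigr)\oplus\Bigl(\bigoplus_j v_jK[B_j]\Bigr)
\]
is a Stanley decomposition of $P_2$, which I would verify in three steps. For the freeness of the lifted spaces: for each $j$ the map $v_jK[B_j]\to\bar v_jK[B_j]$ induced by $\pi$ is surjective, and if $v_jw=0$ for $w\in K[B_j]$ then $\bar v_jw=0$, forcing $w=0$ since $\bar v_jK[B_j]$ is free; hence $v_jK[B_j]$ is a genuine Stanley space. For directness and the isomorphism onto $P_3$: applying $\pi$ to a relation $\sum_j v_jw_j=0$ and using that $\mathcal{D}_3$ is direct together with freeness gives $w_j=0$ for all $j$, so $V:=\sum_j v_jK[B_j]$ is the internal direct sum and $\pi|_V$ is injective, while $\pi(v_jw)=\bar v_jw$ shows $\pi|_V$ is surjective. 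For assembling the decomposition: since $\pi|_V$ is an isomorphism we get $V\cap\ker\pi=0$ and $V+\ker\pi=P_2$, and as $\ker\pi=P_1$ this yields $P_2=P_1\oplus V$ as $\ZZ^n$-graded $K$-vector spaces, which is precisely the displayed decomposition.

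Reading off the Stanley depth of this decomposition then gives $\min\{\min_i|A_i|,\min_j|B_j|\}=\min\{\sdepth(\mathcal{D}_1),\sdepth(\mathcal{D}_3)\}=\min\{\sdepth(P_1),\sdepth(P_3)\}$, and the desired inequality follows immediately. I expect the main obstacle to be the directness-and-isomorphism step: one must ensure the lifted pieces $v_jK[B_j]$ remain in direct sum inside $P_2$ and that $\pi$ restricts to an isomorphism on their span. The key leverage is that everything is $\ZZ^n$-graded, so the verification reduces to a multidegree-by-multidegree check, where exactness of the sequence forces $\dim_K(P_2)_a=\dim_K(P_1)_a+\dim_K(P_3)_a$ to match the dimensions contributed by the two families of Stanley spaces.
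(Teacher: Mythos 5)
Your proof is correct. Note that the paper itself does not prove this statement; it is quoted from Rauf \cite[Lemma 2.2]{AR1}, so there is no internal proof to compare against. Your argument --- lifting a Stanley decomposition of $P_3$ along the surjection $\pi$, checking that the lifted spaces $v_jK[B_j]$ are free and in direct sum because $\pi$ carries any relation among them to a relation in the direct decomposition of $P_3$, concluding that $\pi$ restricts to an isomorphism on their span $V$ so that $P_2=P_1\oplus V$ as $\ZZ^n$-graded $K$-vector spaces, and then splicing in a Stanley decomposition of $P_1$ --- is exactly the standard proof given in that reference. The only point worth emphasizing, which you handle correctly, is that a Stanley decomposition is only required to be a direct sum of graded $K$-vector spaces, not of modules, so the splitting $P_2=P_1\oplus V$ need not (and does not) respect the $S$-module structure; this is what makes the lifting argument work without any projectivity hypothesis.
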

	\begin{Theorem}[{\cite[Theorems 1.3.3]{BW}}](Auslander–Buchsbaum formula)\label{auss}
		{\em
			If $R$ is a commutative Noetherian local ring and $N$ is a non-zero finitely generated $R$-module of finite projective dimension, then
			\begin{equation*}
			{\pdim}(N)+{\depth}(N)={\depth}(R).
			\end{equation*}
		}
	\end{Theorem}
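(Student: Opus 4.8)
The statement is the Auslander--Buchsbaum formula, so the plan is to reproduce its standard proof by induction on $t := \depth(R)$. Write $\mm$ for the maximal ideal, $k = R/\mm$ for the residue field, and recall that $\pdim(N)$ is the length of a minimal free resolution $F_\bullet$ of $N$, whose syzygy modules lie in $\mm F_i$. For the base case $t = 0$ I would show that $N$ must be free. Here $\mm \in \Ass(R)$, so there is a socle element $0 \ne s \in R$ with $\mm s = 0$; if $\pdim(N) = p \ge 1$, then the last map $\varphi : F_p \to F_{p-1}$ of the minimal resolution is injective with $\varphi(F_p) \subseteq \mm F_{p-1}$, so $\varphi(sa) = s\varphi(a) \in (s\mm)F_{p-1} = 0$ and hence $sa = 0$ for every $a \in F_p$ by injectivity; taking $a$ to be a basis vector of the free module $F_p$ forces $s = 0$, a contradiction. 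Thus $N$ is free, $\depth(N) = \depth(R) = 0$, and the formula reads $0 + 0 = 0$.

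For the inductive step $t \ge 1$ I would distinguish two cases according to $\depth(N)$. Suppose first $\depth(N) \ge 1$. Then $\mm$ is an associated prime of neither $R$ nor $N$, so by prime avoidance there is $x \in \mm$ lying outside every prime in $\Ass(R) \cup \Ass(N)$; such an $x$ is a nonzerodivisor on both $R$ and $N$. I would then pass to $\overline{R} := R/xR$ and $\overline{N} := N/xN$ and invoke the standard facts $\depth(\overline{R}) = t - 1$, $\depth(\overline{N}) = \depth(N) - 1$, and $\pdim_{\overline{R}}(\overline{N}) = \pdim_R(N)$. Applying the induction hypothesis over $\overline{R}$ gives $\pdim(N) + (\depth(N) - 1) = t - 1$, which rearranges at once to the desired identity.

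Suppose instead $\depth(N) = 0$. Since $\depth(N) = 0 \ne t = \depth(R)$, the module $N$ is not free, so $\pdim(N) \ge 1$ and there is a minimal presentation $0 \to M \to F_0 \to N \to 0$ with $\pdim(M) = \pdim(N) - 1$. Feeding this sequence into the Depth Lemma (Lemma \ref{depth}), together with $\depth(F_0) = t \ge 1$ and $\depth(N) = 0$, pins down $\depth(M) = 1$. Now $M$ has positive depth, so the case already treated applies to $M$ and yields $\pdim(M) + 1 = t$; hence $\pdim(N) = \pdim(M) + 1 = t = \pdim(N) + \depth(N)$, as required.

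The step I expect to be the main obstacle is the regular-element reduction in the case $\depth(N) \ge 1$: one must verify that a common nonzerodivisor $x$ on $R$ and on $N$ is automatically a nonzerodivisor on every syzygy module of $N$, so that tensoring the minimal free resolution of $N$ with $\overline{R}$ preserves both exactness and minimality, giving $\pdim_{\overline{R}}(\overline{N}) = \pdim_R(N)$, and one must simultaneously confirm that the two depths each drop by exactly one. These are precisely the points where the Noetherian, local, and finite-generation hypotheses are used, and carrying them out carefully is what makes the single clean induction on $\depth(R)$ go through.
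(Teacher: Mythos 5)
This theorem is quoted in the paper from Bruns--Herzog (Theorem 1.3.3) without proof, so there is no in-paper argument to compare against; your reconstruction is the standard textbook proof by induction on $\depth(R)$ (socle argument when $\depth(R)=0$, reduction modulo a common nonzerodivisor when $\depth(N)\geq 1$, first-syzygy step when $\depth(N)=0$), and it is correct, including the well-foundedness of handing the syzygy $M$ back to the positive-depth case within the same level of the induction. The one point to watch is your appeal to Lemma \ref{depth} to pin down $\depth(M)=1$: as printed in the paper, parts (b) and (c) of that lemma have the submodule and the quotient interchanged (applied literally to $0\to M\to F_0\to N\to 0$ with $\depth(N)=0$ and $\depth(F_0)=t\geq 1$, part (b) would force $\depth(M)\leq -1$), so you should instead invoke the standard inequalities $\depth(P_1)\geq\min\{\depth(P_2),\depth(P_3)+1\}$ and $\depth(P_3)\geq\min\{\depth(P_1)-1,\depth(P_2)\}$, which give exactly $\depth(M)=1$ as you claim.
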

	\noindent Note that for any monomial ideal $J$ of $S$, $\reg (S/J)=\reg (J)-1$ and $\pdim (S/J)=\pdim (J)+1$.
	\begin{Definition}
		{\em
			Let $u\geq1$. A $u$-star denoted by $\mathcal{S}_{u}$ is a graph on $u+1$ vertices, having one internal vertex of degree ${u}$ and all other vertices of degree $1$.
		}
	\end{Definition}
	\begin{Theorem}\label{star}
		{\em
			 If $u\geq1$, then 
			\begin{itemize}
				\item[(a)] $\depth(K[V(\mathcal{S}_{u})]/I(\mathcal{S}_{u}))=\sdepth(K[V(\mathcal{S}_{u})]/I(\mathcal{S}_{u}))=1,$ {\cite[Theorem 2.6]{AAT}}.
				\item[(b)]$\reg(K[V(\mathcal{S}_{u})]/I(\mathcal{S}_{u}))=1,$ {\cite[Lemma 2.26]{bs}}.
				
			\end{itemize} 
		}
	\end{Theorem}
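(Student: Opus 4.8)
Write $R := K[V(\mathcal{S}_u)] = K[v,w_1,\dots,w_u]$, where $v$ is the internal vertex and $w_1,\dots,w_u$ are the leaves, so that $I := I(\mathcal{S}_u) = (vw_1,\dots,vw_u) = v\cdot(w_1,\dots,w_u)$. For part (b), the plan is to apply Theorem \ref{reg} with the variable $y=v$. A direct computation gives $(I:v)=(w_1,\dots,w_u)$ and $(I,v)=(v)$, so that $R/(I:v)\cong K[v]$ and $R/(I,v)\cong K[w_1,\dots,w_u]$ are both polynomial rings and hence have regularity $0$. Since $\reg(R/(I:v))=\reg(R/(I,v))=0$, part (b) of Theorem \ref{reg} yields $\reg(R/I)\in\{0,1\}$. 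Because $I$ is a nonzero ideal generated in degree $2$, one has $\beta_{1,2}(R/I)=u\neq0$, which forces $\reg(R/I)\geq1$; hence $\reg(R/I)=1$. (Alternatively, $\mathcal{S}_u$ is a tree whose induced matching number is $1$, and the regularity of the quotient by the edge ideal of a forest equals its induced matching number.)

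For the depth statement in (a), I would use the short exact sequence
\[
0 \longrightarrow R/(I:v)\,(-1) \xrightarrow{\;\cdot v\;} R/I \longrightarrow R/(I,v) \longrightarrow 0 .
\]
Here $\depth(R/(I:v))=\depth K[v]=1$ and $\depth(R/(I,v))=\depth K[w_1,\dots,w_u]=u$, so Lemma \ref{depth}(a) gives $\depth(R/I)\geq\min\{1,u\}=1$. For the reverse inequality, note that multiplication by the nonzerodivisor $v$ identifies $I$ with $(w_1,\dots,w_u)(-1)$; since $(w_1,\dots,w_u)$ is a complete intersection, its Koszul resolution shows $\pdim(R/I)=u$, and the Auslander--Buchsbaum formula (Theorem \ref{auss}) gives $\depth(R/I)=\depth(R)-\pdim(R/I)=(u+1)-u=1$. (One may instead observe that $(w_1,\dots,w_u)\in\Ass(R/I)$ and invoke the standard bound $\depth(R/I)\leq\dim R/(w_1,\dots,w_u)=1$.)

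Finally, for the Stanley depth in (a), the lower bound $\sdepth(R/I)\geq1$ follows by applying Lemma \ref{sdepth} to the same short exact sequence, using $\sdepth K[v]=1$ and $\sdepth K[w_1,\dots,w_u]=u$; equivalently, one can exhibit the explicit Stanley decomposition $R/I=K[w_1,\dots,w_u]\oplus vK[v]$ obtained by sorting the standard monomials of $R/I$ according to whether they involve only leaves or are pure powers of $v$. The hard part will be the matching upper bound $\sdepth(R/I)\leq1$, which I would establish directly from the Herzog--Vladoiu--Zheng description: in any Stanley decomposition, the residue class of $v$ lies in a unique Stanley space $m\,K[A]$, and since $vw_i\in I$ for every $i$, the set $A$ cannot contain any leaf $w_i$ (otherwise $vw_i$ would be a nonzero standard monomial of that free summand), whence $A\subseteq\{v\}$ and the summand containing $v$ has dimension at most $1$. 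This forces $\sdepth(\mathcal{D})\leq1$ for every decomposition $\mathcal{D}$, giving $\sdepth(R/I)\leq1$ and completing the argument.
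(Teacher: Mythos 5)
Your argument is correct, but note that the paper does not actually prove Theorem \ref{star}: both parts are quoted from the literature, part (a) from \cite[Theorem 2.6]{AAT} and part (b) from \cite[Lemma 2.26]{bs}, so there is no internal proof to compare against and your write-up is a genuinely self-contained replacement. Each of your steps checks out: $(I:v)=(w_1,\dots,w_u)$ and $(I,v)=(v)$ give two polynomial quotients of regularity $0$, Theorem \ref{reg}(b) then pins $\reg(R/I)$ to $\{0,1\}$, and $\beta_{1,2}(R/I)=u\neq 0$ rules out $0$; the depth lower bound via the short exact sequence and the upper bound via $\pdim(R/I)=u$ (or via $(w_1,\dots,w_u)\in\Ass(R/I)$) are both sound; and your Stanley-depth upper bound, showing that the Stanley space containing the class of $v$ can only have $A\subseteq\{v\}$, is a correct direct argument. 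Two small remarks: for the depth and Stanley depth upper bounds you could have stayed entirely within the paper's toolkit by applying Corollary \ref{c1} and Proposition \ref{c2} with $l=v$, since $\depth(R/(I:v))=\sdepth(R/(I:v))=\sdepth(K[v])=1$, which makes the colon-ideal computation do all the work at once; and your explicit decomposition $R/I=K[w_1,\dots,w_u]\oplus vK[v]$ is a nice touch that makes the lower bound for $\sdepth$ independent of Lemma \ref{sdepth}. Your strategy of inducting nothing and instead reducing to the two ideals $(I:v)$ and $(I,v)$ is exactly the pattern the paper uses for its harder results (Lemmas \ref{Lem1}--\ref{Lem4}), so the proof fits the surrounding machinery well even though the paper itself chose to cite this base case rather than prove it.
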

	The following corollary gives the values of depth, Stanley depth, and regularity for the cyclic module associated to $\brs_q(\mathcal{S}_{u})$.
	\begin{Corollary}\label{bstar}
		{\em
			Let $q,u \geq 1$ and $\mathcal{S}_{u,q}=\brs_q(\mathcal{S}_{u})$. Then 
			\begin{itemize}
				\item[(a)] $\depth(K[V(\mathcal{S}_{u,q})]/I(\mathcal{S}_{u,q}))= \sdepth(K[V(\mathcal{S}_{u,q})]/I(\mathcal{S}_{u,q}))=u+q,$ {\cite[Theorem 2.17]{bs}}.
				\item[(b)]$\reg(K[V(\mathcal{S}_{u,q})]/I(\mathcal{S}_{u,q}))=u,$ {\cite[Theorem 2.34]{bs}}.
				
			\end{itemize} 
		}
	\end{Corollary}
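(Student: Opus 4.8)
The plan is to analyze the edge ideal $I:=I(\mathcal{S}_{u,q})$ directly through the combinatorial structure of $\mathcal{S}_{u,q}$. Write $x_0$ for the internal vertex of the underlying star $\mathcal{S}_u$, write $x_1,\dots,x_u$ for its leaves, and write $z_{i,1},\dots,z_{i,q}$ for the $q$ bristles attached to $x_i$ (for $0\le i\le u$); thus $S=K[V(\mathcal{S}_{u,q})]$ has $(u+1)(q+1)$ variables. The central vertex $x_0$ is the object to split on, since deleting it breaks the graph into manageable pieces, so I would run the short exact sequence
$$0\longrightarrow S/(I:x_0)\stackrel{x_0}{\longrightarrow} S/I\longrightarrow S/(I,x_0)\longrightarrow 0.$$

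First I would compute the two outer terms. Since $x_0$ is adjacent exactly to $x_1,\dots,x_u$ and to $z_{0,1},\dots,z_{0,q}$, one gets $(I:x_0)=(x_1,\dots,x_u,z_{0,1},\dots,z_{0,q})$, a prime generated by $u+q$ of the variables; hence $S/(I:x_0)$ is a polynomial ring in the remaining $1+uq$ variables, so $\depth=\sdepth=1+uq$ and $\reg=0$. On the other hand, modulo $x_0$ the ideal $(I,x_0)$ is the edge ideal of the graph obtained by deleting $x_0$, namely the disjoint union of the $q$ isolated vertices $z_{0,1},\dots,z_{0,q}$ and the $u$ stars $\mathcal{S}_q$ on the sets $\{x_i,z_{i,1},\dots,z_{i,q}\}$. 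By Theorem \ref{star} each such star contributes $\depth=\sdepth=\reg=1$, so since depth and regularity are additive over disjoint unions of graphs (Lemma \ref{circulentt} records this for regularity) and Stanley depth is at least the sum, I would obtain $\depth(S/(I,x_0))=u+q$, $\sdepth(S/(I,x_0))\ge u+q$, and $\reg(S/(I,x_0))=u$.

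With these values the three invariants fall out of the sequence. For regularity, since $\reg(S/(I:x_0))=0<u=\reg(S/(I,x_0))$ (using $u\ge1$), Theorem \ref{reg}(c) gives $\reg(S/I)=\reg(S/(I,x_0))=u$, which is (b). For depth, Lemma \ref{depth}(a) gives $\depth(S/I)\ge\min\{1+uq,\,u+q\}=u+q$, the minimum being $u+q$ because $(u-1)(q-1)\ge0$; then Lemma \ref{depth}(b) applied to the same sequence yields $u+q\ge\min\{\depth(S/I),\,uq+2\}$, and the alternative $\depth(S/I)\ge uq+2$ is impossible since it would force $u+q\ge uq+2$, contradicting $(u-1)(q-1)\ge0$. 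Hence $\depth(S/I)=u+q$. The Stanley-depth lower bound $\sdepth(S/I)\ge\min\{1+uq,\,u+q\}=u+q$ comes from Lemma \ref{sdepth}.

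The remaining point, and the one I expect to be the genuine obstacle, is the reverse inequality $\sdepth(S/I)\le u+q$: Lemma \ref{sdepth} only produces lower bounds, and the naive estimate $\sdepth(S/I)\le\dim(S/I)=q(u+1)$ is far too weak. I would therefore close this gap either by exhibiting an explicit Stanley decomposition of $S/I$ containing a Stanley set of cardinality $u+q$ (most cleanly by working with the squarefree multidegrees supported off the minimal vertex cover $\{x_0,x_1,\dots,x_u\}$), or, if a short combinatorial argument is not available, by quoting the Stanley-depth computation for bristled stars in \cite[Theorem 2.17]{bs}. Together with the lower bound this gives $\sdepth(S/I)=u+q$, completing (a).
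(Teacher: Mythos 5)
The paper does not prove this corollary at all: both parts are imported verbatim from \cite[Theorems 2.17 and 2.34]{bs}, so any self-contained argument is already ``a different route.'' Your computation of the two outer terms of the sequence is correct ($(I:x_0)$ is generated by the $u+q$ neighbours of $x_0$, and $(I,x_0)$ leaves $u$ disjoint copies of $\mathcal{S}_q$ plus $q$ free variables), and from there your regularity argument via Theorem \ref{reg}(c) and your depth argument via parts (a) and (b) of the Depth Lemma, together with the inequality $(u-1)(q-1)\ge 0$, are both sound. This is exactly the deletion/contraction machinery the paper itself runs in Lemmas \ref{Lem2} and \ref{Lem4}, so for depth and regularity your proposal is a legitimate replacement for the external citation.

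The one genuine flaw is your first suggestion for the Stanley depth upper bound. Exhibiting a particular Stanley decomposition that contains a Stanley set of cardinality $u+q$ proves nothing: $\sdepth(S/I)$ is the \emph{maximum} over all decompositions of the minimal Stanley set size, so a single decomposition with a small piece only bounds $\sdepth(\mathcal{D})$ for that $\mathcal{D}$, not $\sdepth(S/I)$. An upper bound requires showing that \emph{every} decomposition contains a small Stanley set, and your sketch of ``squarefree multidegrees off the minimal vertex cover'' does not obviously deliver $u+q$ (the multidegree of $1$ only gives the independence number $(u+1)q$, which you already rejected). The tool you want is Proposition \ref{c2}, which is in the paper precisely for this purpose: take $l=x_1x_2\cdots x_u\notin I$; then $(I:l)=(x_0)+(z_{i,j}:1\le i\le u,\,1\le j\le q)$, so $S/(I:l)$ is a polynomial ring in the $u+q$ variables $x_1,\dots,x_u,z_{0,1},\dots,z_{0,q}$, and Proposition \ref{c2} gives $\sdepth(S/I)\le u+q$ (Corollary \ref{c1} gives the matching depth bound and would let you dispense with your Depth Lemma (b) step). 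With that substitution your proof is complete and fully independent of \cite{bs}; your fallback of simply quoting \cite[Theorem 2.17]{bs} is of course what the paper actually does.
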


	\begin{Corollary}[{\cite[Corollary 1.3]{AR1}}]\label{c1}
		{\em
			Let $J\subset S$ be a monomial ideal. Then\\ $\depth(S/J)\leq \depth(S/(J:l))$ for all monomials $l\notin J$.
		}
	\end{Corollary}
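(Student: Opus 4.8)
The plan is to reduce the statement to the case where $l$ is a single variable, and then to treat that case with the short exact sequence that compares $S/J$ with its colon and its sum by that variable. First I would reduce to a variable. Write $l=y_{1}y_{2}\cdots y_{m}$ as a product of (not necessarily distinct) variables, set $J^{(0)}=J$ and $J^{(t)}=(J^{(t-1)}:y_{t})$, so that $J^{(t)}=(J:y_{1}\cdots y_{t})$ and $J^{(m)}=(J:l)$. Since $l\notin J$ and $y_{1}\cdots y_{t}$ divides $l$, none of the monomials $y_{1}\cdots y_{t}$ lies in $J$; in particular $y_{t}\notin J^{(t-1)}$, so every step is a genuine colon by a variable that remains a nonunit modulo the ideal. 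Hence it suffices to prove $\depth(S/J')\le\depth(S/(J':y))$ for an arbitrary monomial ideal $J'$ and a variable $y$ with $y\notin J'$, because chaining these inequalities over $t=1,\dots,m$ gives $\depth(S/J)\le\depth(S/(J:l))$.

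For the variable case I would use the short exact sequence
\[
0\longrightarrow S/(J:y)\xrightarrow{\,\cdot y\,} S/J\longrightarrow S/(J,y)\longrightarrow 0,
\]
where multiplication by $y$ is injective with image $((y)+J)/J$ and cokernel $S/(J,y)$ (the degree shift by $\deg y=1$ does not affect depth). Applying the Depth Lemma (Lemma \ref{depth}) to this sequence, in the form that bounds the depth of the submodule, yields $\depth(S/(J:y))\ge\min\{\depth(S/J),\ \depth(S/(J,y))+1\}$. Thus the desired inequality $\depth(S/J)\le\depth(S/(J:y))$ follows immediately once one knows the auxiliary estimate $\depth(S/(J,y))\ge\depth(S/J)-1$, which forces the minimum on the right to equal $\depth(S/J)$.

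The main obstacle is precisely this auxiliary estimate: the bare Depth Lemma is not enough, since from the same sequence it only returns $\depth(S/(J:y))\ge\min\{\depth(S/(J,y))-1,\ \depth(S/J)\}$, and a short computation shows that both depth inequalities coming from the sequence are simultaneously consistent with $\depth(S/(J:y))=\depth(S/J)-1$, so no purely formal argument can close the gap. To prove $\depth(S/(J,y))\ge\depth(S/J)-1$ I would pass to $S=S'[y]$ with $S'=K[\,\text{the remaining variables}\,]$ and observe that $S/(J,y)\cong S'/(J\cap S')$, so that killing $y$ amounts to deleting the generators of $J$ that involve $y$. One then proves, by induction on the number of variables and via the multigraded decomposition of $S/J$ into its $y$-slices $S'/J_{k}$ with $J_{k}=(J:y^{k})\cap S'$, that setting one variable to zero lowers the depth by at most one; the delicate point in that induction is controlling the $y$-torsion submodule $(J:y)/J=(0:_{S/J}y)$, which is annihilated by $y$ and hence is genuinely an $S'$-module. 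An equivalent route would be to run the local cohomology long exact sequence attached to the displayed sequence and track $H^{i}_{\mathfrak m}$ through the comparison of $S/J$, $S/(J:y)$ and $S/(J,y)$, where the same torsion module is the only term that must be estimated by hand.
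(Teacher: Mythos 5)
The paper offers no proof of this statement at all --- it is imported verbatim from \cite[Corollary 1.3]{AR1} --- so the only question is whether your argument would stand on its own. Your reduction to the case of a single variable is correct: each $y_t\notin (J:y_1\cdots y_{t-1})$ because $y_1\cdots y_t$ divides $l\notin J$, colons of monomial ideals by monomials are again monomial, and chaining the one-variable inequality gives the general case. The short exact sequence $0\to S/(J:y)\xrightarrow{\cdot y}S/J\to S/(J,y)\to 0$ is also the right object, and the inequality you extract from it, $\depth(S/(J:y))\ge\min\{\depth(S/J),\depth(S/(J,y))+1\}$, is the correct form of the Depth Lemma for the sub-object (note it is \emph{stronger} than part (c) of Lemma \ref{depth} as printed in this paper, so you are implicitly relying on the standard statement rather than the one available here).

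The genuine gap is the auxiliary inequality $\depth(S/(J,y))\ge\depth(S/J)-1$, which you correctly identify as the crux but never prove. The difficulty is not just that your sketch is incomplete; it is that, modulo the Depth Lemma applied to the same exact sequence, this auxiliary inequality is \emph{equivalent} to the one-variable case of the corollary: the quotient-object estimate gives $\depth(S/(J,y))\ge\min\{\depth(S/J),\depth(S/(J:y))-1\}$, so the target inequality implies your auxiliary one and, as you show, conversely. Your reduction therefore replaces the statement by an equivalent statement, and the argument is circular in effect unless the auxiliary inequality is established by independent means. It cannot be established formally: for a general ideal and a general (even monomial, non-variable) element $f$, $\depth(S/(J,f))$ can drop by more than one, so the inequality is a genuinely combinatorial fact about monomial ideals and single variables. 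Its known proofs go through multigraded Betti numbers --- one shows $\pdim_{S'}(S'/(J\cap S'))\le\pdim_S(S/J)$ for $S'=K[x_i:x_i\ne y]$, because the graded Betti numbers of $S/J$ in multidegrees with $y$-exponent zero compute those of $S'/(J\cap S')$, and then applies Theorem \ref{auss} --- or through Takayama-type formulas for $\ZZ^n$-graded local cohomology; this is exactly the machinery your sketched induction on the $y$-slices $S'/J_k$ is trying to avoid, and the ``delicate point'' you defer (the $y$-torsion $(0:_{S/J}y)$, equivalently the comparison of $H^i_{\mathfrak{m}}$ with $H^i_{\mathfrak{m}'}$ for the non-finitely-generated $S'$-module $S/J$) is where the entire content of the theorem lives. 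To close the argument you should either prove the Betti-number restriction statement honestly, or follow \cite{AR1}, which derives the corollary directly from the multigraded structure of $S/J$ rather than from this exact sequence.
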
	
	\begin{Proposition}[{\cite[Proposition 2.7]{MC}}]\label{c2} 
		{\em
			Let $J\subset S$ be a monomial ideal. Then\\ $\sdepth(S/J)\leq \sdepth(S/(J:l))$ for all monomials $l\notin J$.
		}
	\end{Proposition}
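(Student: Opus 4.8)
The plan is to transport an optimal Stanley decomposition of $S/J$ across multiplication by $l$ to produce a Stanley decomposition of $S/(J:l)$ whose Stanley depth is no smaller. Since $l\notin J$, the residue class of $l$ is nonzero in $S/J$, so $(J:l)\neq S$ and $S/(J:l)\neq 0$, making everything below meaningful.

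First I would fix a Stanley decomposition $\mathcal{D}\colon S/J=\bigoplus_{i=1}^{t} m_i K[A_i]$ with $\sdepth(\mathcal{D})=\sdepth(S/J)$; such a $\mathcal{D}$ exists by the definition of Stanley depth. Recall that a $K$-basis of $S/J$ is the set of monomials $v\notin J$, and each such $v$ lies in a unique summand $m_iK[A_i]$, meaning $m_i\mid v$ and $v/m_i$ is a monomial in the variables of $A_i$. Likewise, a $K$-basis of $S/(J:l)$ is the set of monomials $u$ with $ul\notin J$.

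The core step is to partition the monomials $u$ with $ul\notin J$ according to which summand of $\mathcal{D}$ contains the monomial $ul$, and to verify that each block is again a Stanley space over the \emph{same} set $A_i$. Reading the membership condition $ul\in m_iK[A_i]$ on exponent vectors, for every variable $x_j\notin A_i$ the exponent of $ul$ must equal that of $m_i$, which pins down the exponent of $u$ at $x_j$ and forces the nonemptiness requirement that the exponent of $m_i$ at $x_j$ be at least that of $l$ at $x_j$ for all $x_j\notin A_i$; whereas for $x_j\in A_i$ the exponent of $u$ is free above an explicit minimum. Thus, letting $I'\subseteq\{1,\dots,t\}$ collect the indices for which this block is nonempty, the block corresponding to $i\in I'$ is exactly $u^{(i)}K[A_i]$, where $u^{(i)}$ is the monomial whose exponent at each $x_j\notin A_i$ equals the exponent of $m_i$ minus the exponent of $l$, and whose exponent at each $x_j\in A_i$ equals the maximum of $0$ and that same difference. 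Because distinct summands of $\mathcal{D}$ are disjoint and cover all monomials $v\notin J$, these blocks are pairwise disjoint and cover all $u$ with $ul\notin J$, yielding a Stanley decomposition $S/(J:l)=\bigoplus_{i\in I'} u^{(i)}K[A_i]$.

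Finally I would conclude as follows: every summand of the new decomposition carries one of the sets $A_i$, so its Stanley depth is $\min_{i\in I'}|A_i|\geq \min_{1\le i\le t}|A_i|=\sdepth(\mathcal{D})=\sdepth(S/J)$; since $\sdepth(S/(J:l))$ is the maximum over all its Stanley decompositions, this gives $\sdepth(S/(J:l))\geq \sdepth(S/J)$, as claimed. The main obstacle is the middle step, namely verifying rigorously that pulling a single Stanley space back through multiplication by $l$ yields a genuine Stanley space on the same index set $A_i$. The delicate points there are the nonemptiness condition on the fixed coordinates ($x_j\notin A_i$) and the need to truncate at $0$ on the free coordinates ($x_j\in A_i$) when the difference of exponents is negative; once these are handled the index set, and hence the dimension of each Stanley space, is preserved, which is precisely what forces the Stanley depth not to decrease.
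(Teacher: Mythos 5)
Your argument is correct and complete: it is exactly the standard proof of this result from the cited reference (Cimpoeas, Proposition 2.7), which the paper itself quotes without proof --- transporting an optimal Stanley decomposition of $S/J$ back through multiplication by $l$, block by block, with each nonempty block being $u^{(i)}K[A_i]$ where $u^{(i)}=m_i/\gcd(m_i,l)$. The exponent bookkeeping you describe (fixed coordinates off $A_i$ forcing the nonemptiness condition, truncation at $0$ on $A_i$) is precisely what is needed, and since every surviving Stanley space keeps its original index set $A_i$, the inequality $\sdepth(S/(J:l))\geq\sdepth(S/J)$ follows.
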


	\begin{Lemma}[{\cite[Proposition 2.2.21]{VRH}}]\label{ref3}
		{\em
			Let $\mathit{J}_1 \subset \mathit{S^\wedge} = K[x_1 , \dots , x_q],\,\mathit{J}_2 \subset \mathit{S^\diamond}= K[x_{q+1}, \dots , x_n]$ be monomial ideals, with $1 \leq q < n$ and $S=\mathit{S^\wedge}\otimes_{K} \mathit{S^\diamond}$. Then 		
			$$\depth(\mathit{S^\wedge} /\mathit{J}_1\otimes_K\mathit{S^\diamond}/\mathit{J}_2)=\depth_\mathit{S} (\mathit{S}/ (\mathit{J}_1\mathit{S}+ \mathit{J}_2\mathit{S}) )= \depth_{\mathit{S^\wedge}}(\mathit{S^\wedge}/\mathit{J}_1) + \depth_{\mathit{S^\diamond}}(\mathit{S^\diamond}/\mathit{J}_2).$$
		} 
	\end{Lemma}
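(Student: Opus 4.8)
The plan is to reduce the additivity of depth to the additivity of projective dimension, and then use the Auslander--Buchsbaum formula (Theorem~\ref{auss}) to convert this back into the desired statement. The first equality is purely formal: since $S=S^\wedge\otimes_K S^\diamond$ is the polynomial ring $K[x_1,\dots,x_n]$ with the two variable sets separated, there is a natural isomorphism of graded $S$-modules
$$S^\wedge/J_1\otimes_K S^\diamond/J_2\;\cong\;S/(J_1S+J_2S),$$
obtained by sending $\bar a\otimes\bar b$ to $\overline{ab}$. This is well defined and bijective because each monomial $m\in S$ factors uniquely as $m=m_1m_2$ with $m_1$ in the first $q$ variables and $m_2$ in the last $n-q$, and $m\notin J_1S+J_2S$ exactly when $m_1\notin J_1$ and $m_2\notin J_2$; thus the monomial $K$-bases on the two sides correspond. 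Hence it suffices to prove the second equality.

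First I would fix minimal graded free resolutions $F_\bullet\to S^\wedge/J_1$ over $S^\wedge$ and $G_\bullet\to S^\diamond/J_2$ over $S^\diamond$; these are finite because $S^\wedge$ and $S^\diamond$ are regular (Hilbert's syzygy theorem). I then form the total tensor complex $(F\otimes_K G)_\bullet$ over $S$. The central step is to check that this is again a \emph{minimal} free resolution of $S/(J_1S+J_2S)$. Freeness is clear, since a tensor product over $K$ of a free $S^\wedge$-module with a free $S^\diamond$-module is free over $S$. Exactness follows from the K\"unneth formula: because $K$ is a field, the relevant $\Tor$ groups vanish, so the homology of the tensor complex is the tensor product of the homologies, which is concentrated in homological degree $0$ and equals $S^\wedge/J_1\otimes_K S^\diamond/J_2$. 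Minimality follows because the differentials of $F_\bullet$ and $G_\bullet$ have entries in $(x_1,\dots,x_q)$ and $(x_{q+1},\dots,x_n)$ respectively, so the induced differential of the tensor complex has all entries in the graded maximal ideal $\mm=(x_1,\dots,x_n)$ of $S$.

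Granting minimality, the length of the tensor resolution is the sum of the lengths of $F_\bullet$ and $G_\bullet$, which gives
$$\pdim_S\bigl(S/(J_1S+J_2S)\bigr)=\pdim_{S^\wedge}(S^\wedge/J_1)+\pdim_{S^\diamond}(S^\diamond/J_2).$$
Finally I would apply the (graded) Auslander--Buchsbaum formula to each of the three modules, using $\depth S=n$, $\depth S^\wedge=q$ and $\depth S^\diamond=n-q$, and add the two equations for the factors:
$$\depth_{S^\wedge}(S^\wedge/J_1)+\depth_{S^\diamond}(S^\diamond/J_2)=n-\bigl[\pdim_{S^\wedge}(S^\wedge/J_1)+\pdim_{S^\diamond}(S^\diamond/J_2)\bigr]=n-\pdim_S\bigl(S/(J_1S+J_2S)\bigr),$$
and the right-hand side equals $\depth_S(S/(J_1S+J_2S))$ by one more application of Auslander--Buchsbaum. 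I expect the main obstacle to be exactly the exactness and minimality of the tensor complex, that is, the careful invocation of the K\"unneth formula to rule out spurious homology; once that is settled, the bookkeeping with Auslander--Buchsbaum is routine.
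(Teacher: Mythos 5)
This lemma is quoted in the paper from Villarreal's book (\cite[Proposition 2.2.21]{VRH}) without an in-paper proof, so there is nothing internal to compare against; your argument stands on its own and it is correct. The identification $S^\wedge/J_1\otimes_K S^\diamond/J_2\cong S/(J_1S+J_2S)$ via the unique factorization of a monomial of $S$ into its parts in the two variable sets is right; the tensor product of the two minimal graded free resolutions is a complex of free $S$-modules, it is exact in positive degrees by the K\"unneth formula over the field $K$, and it is minimal because every entry of $d^F\otimes 1\pm 1\otimes d^G$ lies in the graded maximal ideal $\mm=(x_1,\dots,x_n)$; the resulting additivity of projective dimension then converts into additivity of depth by three applications of the Auslander--Buchsbaum formula. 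Two small points you may wish to make explicit: you are invoking the \emph{graded} version of Auslander--Buchsbaum (Theorem~\ref{auss} is stated for local rings, but the graded analogue with respect to $\mm$ is what is needed here and is standard), and the statement tacitly assumes $J_1\neq S^\wedge$ and $J_2\neq S^\diamond$ so that none of the modules involved is zero. An alternative, resolution-free route to the same conclusion is to concatenate maximal regular sequences on the two factors and verify maximality via $\Ext$ or local cohomology, but your argument is complete as written.
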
 
	
	\begin{Lemma} [{\cite[Theorem 3.1]{AR1}}]\label{ref5}
		{\em
			Let $\mathit{J}_1 \subset \mathit{S^\wedge} = K[x_1 , \dots , x_q],\,\mathit{J}_2 \subset \mathit{S^\diamond} = K[x_{q+1}, \dots , x_n]$ be monomial ideals, with $1 \leq q < n$ and $S=S^{\wedge}\otimes_K \mathit{S^\diamond}$. Then 		
			$$\sdepth(\mathit{S^\wedge}/\mathit{J}_1\otimes_K\mathit{S^\diamond}/\mathit{J}_2) =\sdepth_\mathit{S} (\mathit{S}/ (\mathit{J}_1\mathit{S}+ \mathit{J}_2\mathit{S}) )\geq \sdepth_{\mathit{S^\wedge}}(\mathit{S^\wedge}/\mathit{J}_1) + \sdepth_{\mathit{S^\diamond}}(\mathit{S^\diamond}/\mathit{J}_2).$$ 
		}
	\end{Lemma}

	\begin{Lemma}\label{ref6}
		{\em
			Let $\mathit{J}\subset \mathit{S}= K[x_1 , \dots , x_n]$ be a monomial ideal and $\mathit{S^\wedge}=\mathit{S}\bigotimes_KK[x_{n+1},\dots,x_{n+s}]$. Then
			\begin{itemize}
				\item[(a)] $\depth(\mathit{S^\wedge}/\mathit{J})=\depth(\mathit{S}/\mathit{J})+s\text{\, and \,}\sdepth(\mathit{S^\wedge}/\mathit{J})=\sdepth(\mathit{S}/\mathit{J})+s,$ {\cite[Lemma 3.6]{herzog}}.
				\item[(b)]$\reg(\mathit{S^\wedge}/\mathit{J})=\reg(\mathit{S}/\mathit{J}) ,$ {\cite[Lemma 3.6]{Morey}}. 
				
			\end{itemize} 
		}
	\end{Lemma}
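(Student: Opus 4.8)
The plan is to reduce both parts to the canonical isomorphism $S^\wedge/JS^\wedge \cong (S/J)\otimes_K K[x_{n+1},\dots,x_{n+s}]$ (here $S^\wedge/J$ is shorthand for the extension $S^\wedge/JS^\wedge$) and then feed this into the tensor-product results already recorded as Lemma \ref{ref3} and Lemma \ref{ref5}, supplemented by the behaviour of the minimal free resolution under the flat extension $S\hookrightarrow S^\wedge$.

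For the depth equality in (a), I would apply Lemma \ref{ref3} with $J_1=J$ and $J_2=(0)\subset K[x_{n+1},\dots,x_{n+s}]$; since $\depth\bigl(K[x_{n+1},\dots,x_{n+s}]\bigr)=s$, this yields $\depth(S^\wedge/J)=\depth(S/J)+s$ immediately. For the Stanley depth, the same substitution in Lemma \ref{ref5}, together with $\sdepth\bigl(K[x_{n+1},\dots,x_{n+s}]\bigr)=s$, gives the lower bound $\sdepth(S^\wedge/J)\geq \sdepth(S/J)+s$ for free.

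The main obstacle is the reverse inequality for Stanley depth, since Lemma \ref{ref5} provides only one direction. I expect to handle this by induction on $s$, reducing to the case of a single new variable $y$ and proving $\sdepth(S[y]/JS[y])\leq \sdepth(S/J)+1$: beginning from an optimal Stanley decomposition of $S[y]/JS[y]$, one extracts a Stanley decomposition of $S/J$ by restricting to the summands whose monomials are independent of $y$, arranging that the minimal set size drops by at most one. This upper bound is precisely the content of \cite[Lemma 3.6]{herzog}, so one may alternatively just cite that reference.

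For part (b), I would use that $K[x_{n+1},\dots,x_{n+s}]$ is $K$-flat: if $F_\bullet\to S/J$ is the minimal graded free resolution over $S$, then $F_\bullet\otimes_K K[x_{n+1},\dots,x_{n+s}]$ is a graded free resolution of $S^\wedge/JS^\wedge$ over $S^\wedge$, and it stays minimal because the entries of the differentials lie in the graded maximal ideal of $S$ and hence in that of $S^\wedge$. Therefore $\beta_{i,j}^{S^\wedge}(S^\wedge/J)=\beta_{i,j}^{S}(S/J)$ for all $i,j$, and passing to $\max\{\,j-i:\beta_{i,j}\neq 0\,\}$ gives $\reg(S^\wedge/J)=\reg(S/J)$, as stated in \cite[Lemma 3.6]{Morey}. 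Every equality here except the Stanley-depth upper bound is a formal consequence of the two tensor-product lemmas and the invariance of the Betti numbers under the flat base change, so that single inequality is where the real work sits.
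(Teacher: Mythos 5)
The paper offers no proof of this lemma at all: both parts are quoted results, part (a) cited to \cite[Lemma 3.6]{herzog} and part (b) to \cite[Lemma 3.6]{Morey}, so the comparison here is between your argument and a bare citation. Your proposal is correct. The depth equality does follow from Lemma \ref{ref3} applied with $J_2=(0)$, since $\depth(K[x_{n+1},\dots,x_{n+s}])=s$, and Lemma \ref{ref5} likewise gives the lower bound $\sdepth(S^\wedge/JS^\wedge)\geq\sdepth(S/J)+s$. Your sketch of the reverse inequality is sound and is exactly the content of the Herzog--Vl\u{a}doiu--Zheng lemma: in an optimal Stanley decomposition of $S[y]/JS[y]$ the monomials of $S/J$ can only lie in Stanley spaces $m_iK[A_i]$ with $y\nmid m_i$, and deleting $y$ from each such $A_i$ partitions the monomials of $S/J$ while lowering each $|A_i|$ by at most one; iterating over $s$ variables (or simply citing the reference, as you note) closes part (a). For part (b), the flat base change argument is the standard one: $S^\wedge$ is free over $S$, the extended complex resolves $S^\wedge/JS^\wedge$, and minimality is preserved because the entries of the differentials lie in the graded maximal ideal of $S$ and hence in that of $S^\wedge$, so all graded Betti numbers, and with them the regularity, are unchanged. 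There is no circularity, since Lemmas \ref{ref3} and \ref{ref5} are imported from Villarreal and Rauf independently of the present statement. What your route buys is a justification that is self-contained within the paper's own preliminary lemmas except for one genuinely nontrivial inequality; what the paper's route buys is brevity, at the cost of sending the reader to two external references.
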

\begin{Lemma} [{\cite[Lemma 3.1]{AR1}}]\label{IsoJustification}
{\em
Let $G$ be a simple graph with edge ideal $I(G)\subset S=K[V(G)]$ and let $v\in V(G),$ the neighborhood $N(v)=\{u:\{u,v\}\in E(G)\}$ and the closed neighbourhood $N[v]=N(v)\cup\{v\}$. Then:
  $$(I(G):v)=(I(G\setminus N[v]),N(v)) \text{\,\,\,\,\,\,\, and \,\,\,\,\,\,\, } (I(G),v)=(I(G\setminus v),v).$$}
\end{Lemma}
			Let $S_{n,p}:=K[V(\mathcal{T}_n(p))]$ and $S_{n,p,q}:=K[V(\brs_q(\mathcal{T}_n(p)))]$ be polynomial rings whose variables are the vertices of $\mathcal{T}_n(p)$ and $\brs_q(\mathcal{T}_n(p))$, respectively. We denote the edge ideals of $\mathcal{T}_n(p)$ and $\brs_q(\mathcal{T}_n(p))$ by $I_{n,p}$ and $I_{n,p,q}$, respectively. Clearly, $|V(\brs_q(\mathcal{T}_n(p)))|=(1+q)(1+n+np)$ and $|E(\brs_q(\mathcal{T}_n(p)))|=(2p+1)n+(1+n+np)q$. We label the vertices of $\brs_q(\mathcal{T}_n(p))$ in the way as shown in Figure \ref{fig:Tbrs}. If we remove all the pendant vertices from $\brs_q(\mathcal{T}_n(p))$, then we are left with $\mathcal{T}_n(p)$. That is, $\mathcal{T}_n(p)$ is a subgraph induced in $\brs_q(\mathcal{T}_n(p))$ by $V(\mathcal{T}_ n(p))$. Therefore, we use the same labelling for $V(\mathcal{T}_ n(p))$ as we did in  $\brs_q(\mathcal{T}_n(p))$.
   \begin{figure}[H]
			\centering
			\[\begin{tikzpicture}[x=0.7cm, y=0.5cm]
			\vertex[fill] (1) at (0,0) [label=left:$\mathit{x}_1$]{};
			\vertex[fill] (2) at (4,0) [label=above:$\mathit{x}_2$]{};
			\vertex[fill] (3) at (8,0) [label=above:$\mathit{x}_3$]{};
			\vertex[fill] (4) at (12,0) [label=right:$\mathit{x}_4$]{};
			\vertex[fill] (5) at (2,2)  [label=left:$\mathit{y}_{11}$]{};
			\vertex[fill] (6) at (6,2) [label=left:$\mathit{y}_{21}$]{};
			\vertex[fill] (7) at (10,2) [label=left:$\mathit{y}_{31}$]{};
			\vertex[fill] (8) at (2,4) [label=left:$\mathit{y}_{12}$]{};
			\vertex[fill] (9) at (6,4) [label=left:$\mathit{y}_{22}$]{};
			\vertex[fill] (10) at (10,4) [label=left:$\mathit{y}_{32}$]{};
			\vertex[fill] (11) at (2,6) [label=left:$\mathit{y}_{13}$]{};
			\vertex[fill] (12) at (6,6) [label=left:$\mathit{y}_{23}$]{};
			\vertex[fill] (13) at (10,6) [label=left:$\mathit{y}_{33}$] {};
			\vertex[fill] (14) at (2,0.5) [label=below:$\mathit{y}_{112}$]{};
			\vertex[fill] (15) at (1.5,0.5) [label=left:$\mathit{y}_{111}$]{};
			\vertex[fill] (16) at (2.5,0.5) [label=right:$\mathit{y}_{113}$]{};
			\vertex[fill] (17) at (2,2.5) [label=above:$\mathit{y}_{122}$]{};
			\vertex[fill] (18) at (1.5,2.5) [label=left:$\mathit{y}_{121}$]{};
			\vertex[fill] (20) at (2.5,2.5) [label=right:$\mathit{y}_{123}$]{};
			\vertex[fill] (21) at (2,7.5) [label=above:$\mathit{y}_{132}$]{};
			\vertex[fill] (22) at (1.5,7.5) [label=left:$\mathit{y}_{131}$]{};
			\vertex[fill] (23) at (2.5,7.5) [label=right:$\mathit{y}_{133}$]{};
			\vertex[fill] (24) at (6,0.5)[label=below:$\mathit{y}_{212}$] {};
			\vertex[fill] (25) at (5.5,0.5) [label=left:$\mathit{y}_{211}$]{};
			\vertex[fill] (26) at (6.5,0.5) [label=right:$\mathit{y}_{213}$]{};
			\vertex[fill] (27) at (10,0.5) [label=below:$\mathit{y}_{312}$]{};
			\vertex[fill] (28) at (9.5,0.5) [label=left:$\mathit{y}_{311}$]{};
			\vertex[fill] (29) at (10.5,0.5) [label=right:$\mathit{y}_{313}$]{};
			\vertex[fill] (30) at (6,2.5) [label=above:$\mathit{y}_{222}$]{};
			\vertex[fill] (31) at (5.5,2.5) [label=left:$\mathit{y}_{221}$]{};
			\vertex[fill] (32) at (6.5,2.5) [label=right:$\mathit{y}_{223}$]{};
			\vertex[fill] (33) at (10,2.5) [label=above:$\mathit{y}_{322}$]{};
			\vertex[fill] (34) at (9.5,2.5) [label=left:$\mathit{y}_{321}$]{};
			\vertex[fill] (35) at (10.5,2.5) [label=right:$\mathit{y}_{323}$]{};
			\vertex[fill] (36) at (6,7.5) [label=above:$\mathit{y}_{232}$]{};
			\vertex[fill] (37) at (5.5,7.5) [label=left:$\mathit{y}_{231}$]{};
			\vertex[fill] (38) at (6.5,7.5) [label=right:$\mathit{y}_{233}$]{};
			\vertex[fill] (39) at (10,7.5) [label=above:$\mathit{y}_{332}$]{};
			\vertex[fill] (40) at (9.5,7.5) [label=left:$\mathit{y}_{331}$]{};
			\vertex[fill] (41) at (10.5,7.5) [label=right:$\mathit{y}_{333}$]{};
			\vertex[fill] (42) at (0,-1.5) [label=below:$\mathit{x}_{12}$]{};
			\vertex[fill] (43) at (-0.5,-1.5) [label=left:$\mathit{x}_{11}$]{};
			\vertex[fill] (44) at (0.5,-1.5) [label=right:$\mathit{x}_{13}$]{};
			\vertex[fill] (45) at (3.5,-1.5)[label=left:$\mathit{x}_{21}$] {};
			\vertex[fill] (46) at (4,-1.5) [label=below:$\mathit{x}_{22}$]{};
			\vertex[fill] (47) at (4.5,-1.5) [label=right:$\mathit{x}_{23}$]{};
			\vertex[fill] (48) at (7.5,-1.5) [label=left:$\mathit{x}_{31}$]{};
			\vertex[fill] (49) at (8,-1.5) [label=below:$\mathit{x}_{32}$]{};
			\vertex[fill] (50) at (8.5,-1.5) [label=right:$\mathit{x}_{33}$]{};
			\vertex[fill] (51) at (11.5,-1.5) [label=left:$\mathit{x}_{41}$]{};
			\vertex[fill] (52) at (12,-1.5) [label=below:$\mathit{x}_{42}$]{};
			\vertex[fill] (53) at (12.5,-1.5) [label=right:$\mathit{x}_{43}$]{};
			
			\path 
			(1) edge (2)
			(3) edge (2)
			(3) edge (4)
			(1) edge (5)
			(6) edge (2)
			(3) edge (7)
			(5) edge (2)
			(3) edge (6)
			(7) edge (4)
			(1) edge (8)
			(2) edge (8)
			(2) edge (9)
			(3) edge (9)
			(3) edge (10)
			(4) edge (10)
			(1) edge (11)
			(2) edge (11)
			(2) edge (12)
			(3) edge (12)
			(3) edge (13)
			(4) edge (13)
			(5) edge (14)
			(5) edge (15)
			(5) edge (16)
			(8) edge (17)
			(8) edge (18)
			(8) edge (20)
			(11) edge (21)
			(11) edge (23)
			(11) edge (22)
			(6) edge (24)
			(6) edge (25)
			(6) edge (26)
			(7) edge (27)
			(7) edge (28)
			(7) edge (29)
			(9) edge (30)
			(9) edge (31)
			(9) edge (32)
			(10) edge (33)
			(10) edge (34)
			(10) edge (35)
			(12) edge (36)
			(12) edge (37)
			(12) edge (38)
			(13) edge (39)
			(13) edge (40)
			(13) edge (41)
			(1) edge (42)
			(1) edge (43)
			(1) edge (44)
			(2) edge (45)
			(2) edge (46)
			(2) edge (47)
			(3) edge (48)
			(3) edge (49)
			(3) edge (50)
			(4) edge (51)
			(4) edge (52)
			(4) edge (53)
			;
			\end{tikzpicture}\]
			\caption{$\brs_3(\mathcal{T}_{3}(3))$}($3$-fold bristled graph of $\mathcal{T}_3(3)$)
			\label{fig:Tbrs}
		\end{figure}
  
     Let $\mathcal{T}^\star_n(p)$ be the super graph of $\mathcal{T}_n(p)$ that is obtained by joining $p$ number of pendant vertices to vertex $x_{n+1}$ in $\mathcal{T}_n(p)$ and $I^\star_{n,p}:=I(\mathcal{T}^\star_n(p))$. The vertex and edge sets of $\mathcal{T}^\star_n(p)$ are $V(\mathcal{T}^\star_n(p))=V(\mathcal{T}_n(p))
		\bigcup\{\mathit{y}_{(n+1)1},\mathit{y}_{(n+1)2},\dots,\mathit{y}_{(n+1)p}\}$ and $E(\mathcal{T}^\star_n(p))=E(\mathcal{T}_n(p))\bigcup\{\{x_{n+1},y_{(n+1)j}\}:1\leq j\leq p\}.$ Let $\brs_q(\mathcal{T}^\star_n(p))$ be the super graph of $\brs_q(\mathcal{T}_n(p))$ that is obtained by taking one copy of $\brs_q(\mathcal{T}_n(p))$ and $p$ copies of $q$-star and joining the internal vertex of each copy of $q$-star to vertex $x_{n+1}$ in $\brs_q(\mathcal{T}_n(p))$ and  $I^\star_{n,p,q}:=I(\brs_q(\mathcal{T}^\star_n(p))).$ The vertex and edge sets of $\brs_q(\mathcal{T}^\star_n(p))$ are $V(\brs_q(\mathcal{T}^\star_n(p)))=V(\brs_q(\mathcal{T}_n(p)))\bigcup\bigcup\limits_{j=1}^{p}\{\mathit{y}_{(n+1)j},\mathit{y}_{(n+1)j1},\mathit{y}_{(n+1)j2},\dots,\mathit{y}_{(n+1)jq}\}$ and\\
$E(\brs_q(\mathcal{T}^\star_n(p)))=E(\brs_q(\mathcal{T}_n(p)))\bigcup\{\{x_{n+1},y_{(n+1)j}\}:1\leq j\leq p\}\bigcup\{\{y_{(n+1)j},y_{(n+1)jk}\}:1\leq j\leq p \,,\, 1\leq k\leq q\}.$ We label the vertices of $\brs_q(\mathcal{T}^\star_n(p))$ in the way as shown in Figure \ref{fig:subt}.
\begin{figure}[H]
			\centering
			\[\begin{tikzpicture}[x=0.7cm, y=0.5cm]
			\vertex[fill] (1) at (0,0) [label=left:$\mathit{x}_1$]{};
			\vertex[fill] (2) at (4,0) [label=above:$\mathit{x}_2$]{};
			\vertex[fill] (3) at (8,0) [label=right:$\mathit{x}_3$]{};
			\vertex[fill] (5) at (2,2) [label=right:$\mathit{y}_{11}$] {};
			\vertex[fill] (6) at (6,2) [label=right:$\mathit{y}_{21}$]{};
			\vertex[fill] (7) at (10,2) [label=right:$\mathit{y}_{31}$]{};
			\vertex[fill] (8) at (2,4) [label=right:$\mathit{y}_{12}$]{};
			\vertex[fill] (9) at (6,4) [label=right:$\mathit{y}_{22}$]{};
			\vertex[fill] (10) at (10,4) [label=right:$\mathit{y}_{32}$]{};
			\vertex[fill] (11) at (2,6) [label=right:$\mathit{y}_{13}$]{};
			\vertex[fill] (12) at (6,6) [label=right:$\mathit{y}_{23}$]{};
			\vertex[fill] (13) at (10,6) [label=right:$\mathit{y}_{33}$] {};
			\vertex[fill] (14) at (2,0.5) [label=below:$\mathit{y}_{112}$]{};
			\vertex[fill] (15) at (1.5,0.5) [label=left:$\mathit{y}_{111}$]{};
			\vertex[fill] (16) at (2.5,0.5) [label=right:$\mathit{y}_{113}$]{};
			\vertex[fill] (17) at (2,2.5) [label=above:$\mathit{y}_{122}$]{};
			\vertex[fill] (18) at (1.5,2.5) [label=left:$\mathit{y}_{121}$]{};
			\vertex[fill] (20) at (2.5,2.5) [label=right:$\mathit{y}_{123}$]{};
			\vertex[fill] (21) at (2,7.5) [label=above:$\mathit{y}_{132}$]{};
			\vertex[fill] (22) at (1.5,7.5) [label=left:$\mathit{y}_{131}$]{};
			\vertex[fill] (23) at (2.5,7.5)[label=right:$\mathit{y}_{133}$] {};
			\vertex[fill] (24) at (6,0.5) [label=below:$\mathit{y}_{212}$]{};
			\vertex[fill] (25) at (5.5,0.5) [label=left:$\mathit{y}_{211}$]{};
			\vertex[fill] (26) at (6.5,0.5) [label=right:$\mathit{y}_{213}$]{};
			\vertex[fill] (27) at (10,0.5) [label=below:$\mathit{y}_{312}$]{};
			\vertex[fill] (28) at (9.5,0.5) [label=left:$\mathit{y}_{311}$]{};
			\vertex[fill] (29) at (10.5,0.5)[label=right:$\mathit{y}_{313}$] {};
			\vertex[fill] (30) at (6,2.5) [label=above:$\mathit{y}_{222}$]{};
			\vertex[fill] (31) at (5.5,2.5)[label=left:$\mathit{y}_{221}$] {};
			\vertex[fill] (32) at (6.5,2.5) [label=right:$\mathit{y}_{223}$] {};
			\vertex[fill] (33) at (10,2.5) [label=above:$\mathit{y}_{322}$]{};
			\vertex[fill] (34) at (9.5,2.5)[label=left:$\mathit{y}_{321}$] {};
			\vertex[fill] (35) at (10.5,2.5) [label=right:$\mathit{y}_{323}$]{};
			\vertex[fill] (36) at (6,7.5)[label=above:$\mathit{y}_{232}$] {};
			\vertex[fill] (37) at (5.5,7.5) [label=left:$\mathit{y}_{231}$]{};
			\vertex[fill] (38) at (6.5,7.5) [label=right:$\mathit{y}_{233}$]{};
			\vertex[fill] (39) at (10,7.5) [label=above:$\mathit{y}_{332}$]{};
			\vertex[fill] (40) at (9.5,7.5) [label=left:$\mathit{y}_{331}$]{};
			\vertex[fill] (41) at (10.5,7.5)[label=right:$\mathit{y}_{333}$]{};
			\vertex[fill] (42) at (0,-1.5) [label=below:$\mathit{x}_{12}$]{};
			\vertex[fill] (43) at (-0.5,-1.5) [label=left:$\mathit{x}_{11}$]{};
			\vertex[fill] (44) at (0.5,-1.5)[label=right:$\mathit{x}_{13}$] {};
			\vertex[fill] (45) at (3.5,-1.5)[label=left:$\mathit{x}_{21}$] {};
			\vertex[fill] (46) at (4,-1.5) [label=below:$\mathit{x}_{22}$]{};
			\vertex[fill] (47) at (4.5,-1.5)[label=right:$\mathit{x}_{23}$] {};
			\vertex[fill] (48) at (7.5,-1.5) [label=left:$\mathit{x}_{31}$]{};
			\vertex[fill] (49) at (8,-1.5) [label=below:$\mathit{x}_{32}$]{};
			\vertex[fill] (50) at (8.5,-1.5) [label=right:$\mathit{x}_{33}$]{};
			\path 
			(1) edge (2)
			(3) edge (2)
			(1) edge (5)
			(6) edge (2)
			(3) edge (7)
			(5) edge (2)
			(3) edge (6)
			(1) edge (8)
			(2) edge (8)
			(2) edge (9)
			(3) edge (9)
			(3) edge (10)
			(1) edge (11)
			(2) edge (11)
			(2) edge (12)
			(3) edge (12)
			(3) edge (13)
			(5) edge (14)
			(5) edge (15)
			(5) edge (16)
			(8) edge (17)
			(8) edge (18)
			(8) edge (20)
			(11) edge (21)
			(11) edge (23)
			(11) edge (22)
			(6) edge (24)
			(6) edge (25)
			(6) edge (26)
			(7) edge (27)
			(7) edge (28)
			(7) edge (29)
			(9) edge (30)
			(9) edge (31)
			(9) edge (32)
			(10) edge (33)
			(10) edge (34)
			(10) edge (35)
			(12) edge (36)
			(12) edge (37)
			(12) edge (38)
			(13) edge (39)
			(13) edge (40)
			(13) edge (41)
			(1) edge (42)
			(1) edge (43)
			(1) edge (44)
			(2) edge (45)
			(2) edge (46)
			(2) edge (47)
			(3) edge (48)
			(3) edge (49)
			(3) edge (50)
			;
			\end{tikzpicture}\]
			\caption{ ${\brs_3(\mathcal{T}^\star_2(3))}$}
			\label{fig:subt}
		\end{figure}

            Let $Q_{n,p}:=K[V(\mathcal{O}_n(p))]$ and $Q_{n,p,q}:=K[V(\brs_q(\mathcal{O}_n(p)))]$ be polynomial rings whose variables are the vertices of $\mathcal{O}_n(p)$ and $\brs_q(\mathcal{O}_n(p))$, respectively. We denote the edge ideals of $\mathcal{O}_n(p)$ and $\brs_q(\mathcal{O}_n(p))$  by $J_{n,p}$ and $J_{n,p,q}$, respectively. Since we know that $\mathcal{O}_n(p)$ is obtained by fusing the vertices $x_{1}$ and $x_{n+1}$ of $\mathcal{T}_n(p)$. Therefore $V(\mathcal{O}_{n}(p))=V(\mathcal{T}_{n}(p))\setminus\{x_{n+1}\}.$ So the remaining vertices of $\mathcal{T}_{n}(p)$ that are contained in $\mathcal{O}_{n}(p)$ are labelled in the same way as we did in $\mathcal{T}_{n}(p)$. The edge set of $\mathcal{O}_{n}(p)$ is $E(\mathcal{O}_n(p))=E(\mathcal{T}^\star_{n-1}(p))\bigcup\{\{x_{1},x_{n}\}\}\bigcup\{\{x_{1},y_{nj}\}:1\leq j\leq p\}.$ Similarly we have, $V(\brs_q(\mathcal{O}_n(p)))=V(\brs_q(\mathcal{T}_n(p)))\setminus\{x_{n+1},x_{(n+1)1},x_{(n+1)2},\dots,x_{(n+1)q}\}$ and $E(\brs_q(\mathcal{O}_n(p)))=E(\brs_q(\mathcal{T}^\star_{n-1}(p)))\bigcup\{\{x_{1},x_{n}\}\}\bigcup\{\{x_{1},y_{nj}\}:1\leq j\leq p\}.$ Clearly, $|V(\brs_q(\mathcal{O}_n(p)))|=(q+1)(p+1)n$ and $|E(\brs_q(\mathcal{O}_n(p)))|=n(q(p+1)+2p+1)$. 
            \begin{figure}[H]
			\centering
			\[\begin{tikzpicture}[x=0.7cm, y=0.5cm]
			\vertex[fill] (1) at (0,0) [label=left:$\mathit{x}_1$]{};
			\vertex[fill] (2) at (4,0) [label=above:$\mathit{x}_2$]{};
			\vertex[fill] (3) at (8,0) [label=right:$\mathit{x}_3$]{};
			\vertex[fill] (5) at (2,2) [label=right:$\mathit{y}_{11}$] {};
			\vertex[fill] (6) at (6,2) [label=right:$\mathit{y}_{21}$]{};
			\vertex[fill] (7) at (10,2) [label=right:$\mathit{y}_{31}$]{};
			\vertex[fill] (8) at (2,4) [label=right:$\mathit{y}_{12}$]{};
			\vertex[fill] (9) at (6,4) [label=right:$\mathit{y}_{22}$]{};
			\vertex[fill] (10) at (10,4) [label=right:$\mathit{y}_{32}$]{};
			\vertex[fill] (11) at (2,6) [label=right:$\mathit{y}_{13}$]{};
			\vertex[fill] (12) at (6,6) [label=right:$\mathit{y}_{23}$]{};
			\vertex[fill] (13) at (10,6) [label=right:$\mathit{y}_{33}$] {};
			\vertex[fill] (14) at (2,0.5) [label=below:$\mathit{y}_{112}$]{};
			\vertex[fill] (15) at (1.5,0.5) [label=left:$\mathit{y}_{111}$]{};
			\vertex[fill] (16) at (2.5,0.5) [label=right:$\mathit{y}_{113}$]{};
			\vertex[fill] (17) at (2,2.5) [label=above:$\mathit{y}_{122}$]{};
			\vertex[fill] (18) at (1.5,2.5) [label=left:$\mathit{y}_{121}$]{};
			\vertex[fill] (20) at (2.5,2.5) [label=right:$\mathit{y}_{123}$]{};
			\vertex[fill] (21) at (2,7.5) [label=above:$\mathit{y}_{132}$]{};
			\vertex[fill] (22) at (1.5,7.5) [label=left:$\mathit{y}_{131}$]{};
			\vertex[fill] (23) at (2.5,7.5)[label=right:$\mathit{y}_{133}$] {};
			\vertex[fill] (24) at (6,0.5) [label=below:$\mathit{y}_{212}$]{};
			\vertex[fill] (25) at (5.5,0.5) [label=left:$\mathit{y}_{211}$]{};
			\vertex[fill] (26) at (6.5,0.5) [label=right:$\mathit{y}_{213}$]{};
			\vertex[fill] (27) at (10,0.5) [label=below:$\mathit{y}_{312}$]{};
			\vertex[fill] (28) at (9.5,0.5) [label=left:$\mathit{y}_{311}$]{};
			\vertex[fill] (29) at (10.5,0.5)[label=right:$\mathit{y}_{313}$] {};
			\vertex[fill] (30) at (6,2.5) [label=above:$\mathit{y}_{222}$]{};
			\vertex[fill] (31) at (5.5,2.5)[label=left:$\mathit{y}_{221}$] {};
			\vertex[fill] (32) at (6.5,2.5) [label=right:$\mathit{y}_{223}$] {};
			\vertex[fill] (33) at (10,2.5) [label=above:$\mathit{y}_{322}$]{};
			\vertex[fill] (34) at (9.5,2.5)[label=left:$\mathit{y}_{321}$] {};
			\vertex[fill] (35) at (10.5,2.5) [label=right:$\mathit{y}_{323}$]{};
			\vertex[fill] (36) at (6,7.5)[label=above:$\mathit{y}_{232}$] {};
			\vertex[fill] (37) at (5.5,7.5) [label=left:$\mathit{y}_{231}$]{};
			\vertex[fill] (38) at (6.5,7.5) [label=right:$\mathit{y}_{233}$]{};
			\vertex[fill] (39) at (10,7.5) [label=above:$\mathit{y}_{332}$]{};
			\vertex[fill] (40) at (9.5,7.5) [label=left:$\mathit{y}_{331}$]{};
			\vertex[fill] (41) at (10.5,7.5)[label=right:$\mathit{y}_{333}$]{};
			\vertex[fill] (42) at (0,-1.5) [label=below:$\mathit{x}_{12}$]{};
			\vertex[fill] (43) at (-0.5,-1.5) [label=left:$\mathit{x}_{11}$]{};
			\vertex[fill] (44) at (0.5,-1.5)[label=right:$\mathit{x}_{13}$] {};
			\vertex[fill] (45) at (3.5,-1.5)[label=left:$\mathit{x}_{21}$] {};
			\vertex[fill] (46) at (4,-1.5) [label=below:$\mathit{x}_{22}$]{};
			\vertex[fill] (47) at (4.5,-1.5)[label=right:$\mathit{x}_{23}$] {};
			\vertex[fill] (48) at (7.5,-1.5) [label=left:$\mathit{x}_{31}$]{};
			\vertex[fill] (49) at (8,-1.5) [label=below:$\mathit{x}_{32}$]{};
			\vertex[fill] (50) at (8.5,-1.5) [label=right:$\mathit{x}_{33}$]{};
			\vertex[fill] (51) at (-2,2) [label=right:$\mathit{y}_{41}$]{};
			\vertex[fill] (52) at (-2,4) [label=right:$\mathit{y}_{42}$]{};
			\vertex[fill] (53) at (-2,6) [label=right:$\mathit{y}_{43}$]{};
			\vertex[fill] (54) at (-2,0.5) [label=below:$\mathit{y}_{412}$]{};
			\vertex[fill] (55) at (-1.5,0.5) [label=right:$\mathit{y}_{413}$]{};
			\vertex[fill] (56) at (-2.5,0.5) [label=left:$\mathit{y}_{411}$]{};
			\vertex[fill] (57) at (-2,2.5) [label=above:$\mathit{y}_{422}$]{};
			\vertex[fill] (58) at (-1.5,2.5) [label=right:$\mathit{y}_{423}$]{};
			\vertex[fill] (59) at (-2.5,2.5) [label=left:$\mathit{y}_{421}$]{};
			\vertex[fill] (60) at (-2,7.5) [label=above:$\mathit{y}_{432}$]{};
			\vertex[fill] (61) at (-1.5,7.5) [label=right:$\mathit{y}_{433}$]{};
			\vertex[fill] (62) at (-2.5,7.5) [label=left:$\mathit{y}_{431}$]{};

			\path 
			(1) edge (2)
			(3) edge (2)
			(1) edge (5)
			(6) edge (2)
			(3) edge (7)
			(5) edge (2)
			(3) edge (6)
			(1) edge (8)
			(2) edge (8)
			(2) edge (9)
			(3) edge (9)
			(3) edge (10)
			(1) edge (11)
			(2) edge (11)
			(2) edge (12)
			(3) edge (12)
			(3) edge (13)
			(5) edge (14)
			(5) edge (15)
			(5) edge (16)
			(8) edge (17)
			(8) edge (18)
			(8) edge (20)
			(11) edge (21)
			(11) edge (23)
			(11) edge (22)
			(6) edge (24)
			(6) edge (25)
			(6) edge (26)
			(7) edge (27)
			(7) edge (28)
			(7) edge (29)
			(9) edge (30)
			(9) edge (31)
			(9) edge (32)
			(10) edge (33)
			(10) edge (34)
			(10) edge (35)
			(12) edge (36)
			(12) edge (37)
			(12) edge (38)
			(13) edge (39)
			(13) edge (40)
			(13) edge (41)
			(1) edge (42)
			(1) edge (43)
			(1) edge (44)
			(2) edge (45)
			(2) edge (46)
			(2) edge (47)
			(3) edge (48)
			(3) edge (49)
			(3) edge (50)
			(51) edge (55)
			(51) edge (56)
			(51) edge (54)
			(52) edge (57)
			(52) edge (58)
			(52) edge (59)
			(53) edge (60)
			(53) edge (61)
			(53) edge (62)
			(1) edge (51)
			(1) edge (52)
			(1) edge (53)

			;
			\end{tikzpicture}\]
			\caption{ ${\brs_3(\mathcal{T}^{\star\star}_{2}(3))}$}
			\label{fig:subt2}
		\end{figure}
Let $\mathcal{T}^{\star\star}_{n}(p)$ be the super graph of $\mathcal{T}^{\star}_{n}(p)$ that is obtained by joining $p$ number of pendant vertices to vertex $x_1$ in $\mathcal{T}^{\star}_{n}(p)$ and $J^\star_{n,p}:=I(\mathcal{T}^{\star\star}_{n}(p)).$ The vertex and edge sets of $\mathcal{T}^{\star\star}_{n}(p)$ are $V(\mathcal{T}^{\star\star}_{n}(p))=V(\mathcal{T}^{\star}_{n}(p))\bigcup\{\mathit{y}_{(n+2)1},\mathit{y}_{(n+2)2},\dots,\mathit{y}_{(n+2)p}\}$ and $E(\mathcal{T}^{\star\star}_{n}(p))=E(\mathcal{T}^{\star}_{n}(p))\bigcup\{\{x_{1},y_{(n+2)j}\}:1\leq j\leq p\}.$ Let $\brs_q(\mathcal{T}^{\star\star}_{n}(p))$ be the super graph of $\brs_q(\mathcal{T}^{\star}_{n}(p))$ that is obtained by taking one copy of $\brs_q(\mathcal{T}^{\star}_{n}(p))$ and $p$ copies of $q$-star and joining internal vertex of each copy of $q$-star to vertex $x_1$ in $\brs_q(\mathcal{T}^{\star}_{n}(p))$ and $J^\star_{n,p,q}:=I(\brs_q(\mathcal{T}^{\star\star}_{n}(p))$). The vertex and edge sets of $\brs_q(\mathcal{T}^{\star\star}_{n}(p))$ are $V(\brs_q(\mathcal{T}^{\star\star}_{n}(p)))=V(\brs_q(\mathcal{T}^{\star}_{n}(p)))
\bigcup\bigcup\limits_{j=1}^{p}\{\mathit{y}_{(n+2)j},\mathit{y}_{(n+2)j1},\mathit{y}_{(n+2)j2},\dots,\mathit{y}_{(n+2)jq}\}$ and \\$E(\brs_q(\mathcal{T}^{\star\star}_{n}(p)))=E(\brs_q(\mathcal{T}^{\star}_{n}(p)))\bigcup\{\{x_{1},y_{(n+2)j}\}:1\leq j\leq p\}\bigcup\{\{y_{(n+2)j},y_{(n+2)jk}\}:1\leq j\leq p \,,\, 1\leq k\leq q\}.$ We label the vertices of $\brs_q(\mathcal{T}^{\star\star}_{n}(p))$ in the way as shown in Figure \ref{fig:subt2}.
We consider the polynomial rings $S^\star_{n,p}:=K[V(\mathcal{T}^\star_n(p))]$, $S^\star_{n,p,q}:=K[V(\brs_q(\mathcal{T}^\star_n(p)))]$, $Q^\star_{n,p}:=K[V(\mathcal{T}^{\star\star}_n(p))]$ and $Q^\star_{n,p,q}:=K[V(\brs_q(\mathcal{T}^{\star\star}_n(p)))]$.\\

  Now we consider some subsets of $V(\brs_q(\mathcal{T}^{\star\star}_{n}(p)))$ that will be used frequently in this paper, $\mathit{A}_{i}:=\{\mathit{x}_{i1},\mathit{x}_{i2},\dots,\mathit{x}_{iq}\}$, $\mathit{B}_{j}:=\{\mathit{y}_{j1},\mathit{y}_{j2},\dots,\mathit{y}_{jp}\}$ and $\mathit{C}_{j}:=\bigcup\limits_{k=1}^{p}\{\mathit{y}_{jk1},\mathit{y}_{jk2},\dots,\mathit{y}_{jkq}\}$, for all $i$ and $j$. We show that the values of depth and Stanley depth are equal, which proves the Stanley\textquoteright s inequality for considered modules. \\

	\begin{Remark}
		{\em
			Let $\mathit{J}$ be a square free monomial ideal of $\mathit{S}$ minimally generated by monomials of degree at most $2$. We associate a graph $\mathit{G_{J}}$ to the ideal $\mathit{J}$ with $\mathit{V(G_{J})}=\supp(\mathit{J})$ and $\mathit{E(G_{J})}=\{\{x_i,x_j\} : x_ix_j \in \mathcal{G}(J)\}$. Let $x_r \in \mathit{S}$ be a variable of the polynomial ring $\mathit{S}$ such that $x_r \notin \mathit{J}$. Then $(\mathit{J}:x_r)$ and $(\mathit{J},x_r)$  are monomial ideals of $\mathit{S}$ such that $G_{(\mathit{J},x_r)}$ and $G_{(\mathit{J}:x_r)}$ are subgraphs of $G_{\mathit{J}}$. See Figures \ref{fig:G1} and \ref{fig:G2} for the examples of $G_{(\mathit{I_{n,p,q}}:x_{n+1})}$ and $G_{(\mathit{I_{n,p,q}},x_{n+1})}$, respectively. And see Figures \ref{fig:G3} and \ref{fig:G4} for the examples of $G_{(\mathit{J_{n,p,q}},x_{n+1})}$ and $G_{(\mathit{J_{n,p,q}}:x_{n+1})}$, respectively. For example, taking $n=3$, $p=3$, $q=3$, we have the following isomorphisms:
			$$\mathit{S}_{3,3,3}/(\mathit{I}_{3,3,3},\mathit{x}_{4})\cong\mathit{S}_{3,3,3}/I(G_{(\mathit{I}_{3,3,3},x_4)})\cong \mathit{S}^\star_{2,3,3}/\mathit{I}^\star_{2,3,3}\bigotimes_KK[\mathit{A}_{4}],$$
			$$\mathit{S}_{3,3,3}/(\mathit{I}_{3,3,3}:\mathit{x}_{4})\cong\mathit{S}_{3,3,3}/I(G_{(\mathit{I}_{3,3,3}:x_4)})\cong \mathit{S}^\star_{1,3,3}/\mathit{I}^\star_{1,3,3}\bigotimes_KK[\{\mathit{x}_4\}\cup\mathit{A}_{3}\cup\mathit{C}_{3}],$$
			$$\mathit{Q}_{3,3,3}/(\mathit{J}_{3,3,3},\mathit{x}_{4})\cong\mathit{Q}_{3,3,3}/J(G_{(\mathit{J}_{3,3,3},x_4)})\cong \mathit{Q}^\star_{2,3,3}/\mathit{J}^\star_{2,3,3}\bigotimes_KK[\mathit{A}_{4}],$$
			$$\mathit{Q}_{3,3,3}/(\mathit{J}_{3,3,3}:\mathit{x}_{4})\cong K[V(\mathcal{S}_{4,3})]/I(\mathcal{S}_{4,3})\bigotimes_KK[\{\mathit{x}_4\}\cup\mathit{A}_{1}\cup\mathit{A}_{3}\cup\mathit{C}_{3}\cup\mathit{C}_{4}].$$
		}
	\end{Remark}


	\section{Depth, Stanley Depth, regularity and projective dimension of cyclic modules associated to $\mathcal{T}_n$, $\mathcal{T}_n(p)$, $\brs_q(\mathcal{T}_n)$ and $\brs_q(\mathcal{T}_n(p))$}\label{sec3}
	In this section we compute the exact value of regularity for the cyclic module $\mathit{S}_{n,p}/\mathit{I}_{n,p}$. For this purpose we first compute the exact value of regularity for the cyclic module $\mathit{S}^\star_{n,p}/\mathit{I}^\star_{n,p}$. Shahid et al. in \cite{SMM} gave the values and tight bounds of depth and Stanley depth for these modules. The values and bounds of projective dimension for these modules can be found by using Theorem \ref{auss}. Further we compute the exact values of depth, Stanley depth, regularity and projective dimension for the cyclic module $\mathit{S}_{n,p,q}/\mathit{I}_{n,p,q}$. For this purpose we first compute the exact values of all mentioned invariants for the cyclic module $\mathit{S}^\star_{n,p,q}/\mathit{I}^\star_{n,p,q}$. \begin{figure}[H]
		\begin{minipage}{0.5\textwidth}
			\centering
			\[\begin{tikzpicture}[x=0.45cm, y=0.45cm]
			\vertex[fill] (1) at (0,0) {};
			\vertex[fill] (2) at (4,0) {};
			\vertex[fill] (3) at (8,0) {};
			\vertex[fill] (4) at (12,0) {};
			\vertex[fill] (5) at (2,2)  {};
			\vertex[fill] (6) at (6,2) {};
			\vertex[fill] (7) at (10,2) {};
			\vertex[fill] (8) at (2,4) {};
			\vertex[fill] (9) at (6,4) {};
			\vertex[fill] (10) at (10,4) {};
			\vertex[fill] (11) at (2,6) {};
			\vertex[fill] (12) at (6,6) {};
			\vertex[fill] (13) at (10,6)  {};
			\vertex[fill] (14) at (2,0.5) {};
			\vertex[fill] (15) at (1.5,0.5) {};
			\vertex[fill] (16) at (2.5,0.5) {};
			\vertex[fill] (17) at (2,2.5) {};
			\vertex[fill] (18) at (1.5,2.5) {};
			\vertex[fill] (20) at (2.5,2.5) {};
			\vertex[fill] (21) at (2,7.5) {};
			\vertex[fill] (22) at (1.5,7.5) {};
			\vertex[fill] (23) at (2.5,7.5) {};
			\vertex[fill] (24) at (6,0.5){};
			\vertex[fill] (25) at (5.5,0.5) {};
			\vertex[fill] (26) at (6.5,0.5) {};
			\vertex[fill] (27) at (10,0.5) {};
			\vertex[fill] (28) at (9.5,0.5) {};
			\vertex[fill] (29) at (10.5,0.5) {};
			\vertex[fill] (30) at (6,2.5) {};
			\vertex[fill] (31) at (5.5,2.5) {};
			\vertex[fill] (32) at (6.5,2.5) {};
			\vertex[fill] (33) at (10,2.5) {};
			\vertex[fill] (34) at (9.5,2.5) {};
			\vertex[fill] (35) at (10.5,2.5) {};
			\vertex[fill] (36) at (6,7.5) {};
			\vertex[fill] (37) at (5.5,7.5) {};
			\vertex[fill] (38) at (6.5,7.5) {};
			\vertex[fill] (39) at (10,7.5) {};
			\vertex[fill] (40) at (9.5,7.5) {};
			\vertex[fill] (41) at (10.5,7.5) {};
			\vertex[fill] (42) at (0,-1.5) {};
			\vertex[fill] (43) at (-0.5,-1.5) {};
			\vertex[fill] (44) at (0.5,-1.5) {};
			\vertex[fill] (45) at (3.5,-1.5) {};
			\vertex[fill] (46) at (4,-1.5) {};
			\vertex[fill] (47) at (4.5,-1.5) {};
			\vertex[fill] (48) at (7.5,-1.5) {};
			\vertex[fill] (49) at (8,-1.5) {};
			\vertex[fill] (50) at (8.5,-1.5) {};
			
			\path 
			(1) edge (2)
			(3) edge (2)
			(1) edge (5)
			(6) edge (2)
			(3) edge (7)
			(5) edge (2)
			(3) edge (6)
			(1) edge (8)
			(2) edge (8)
			(2) edge (9)
			(3) edge (9)
			(3) edge (10)
			(1) edge (11)
			(2) edge (11)
			(2) edge (12)
			(3) edge (12)
			(3) edge (13)
			(5) edge (14)
			(5) edge (15)
			(5) edge (16)
			(8) edge (17)
			(8) edge (18)
			(8) edge (20)
			(11) edge (21)
			(11) edge (23)
			(11) edge (22)
			(6) edge (24)
			(6) edge (25)
			(6) edge (26)
			(7) edge (27)
			(7) edge (28)
			(7) edge (29)
			(9) edge (30)
			(9) edge (31)
			(9) edge (32)
			(10) edge (33)
			(10) edge (34)
			(10) edge (35)
			(12) edge (36)
			(12) edge (37)
			(12) edge (38)
			(13) edge (39)
			(13) edge (40)
			(13) edge (41)
			(1) edge (42)
			(1) edge (43)
			(1) edge (44)
			(2) edge (45)
			(2) edge (46)
			(2) edge (47)
			(3) edge (48)
			(3) edge (49)
			(3) edge (50)
			
			;
			\end{tikzpicture}\]
			\caption{$G_{({I_{3,3,3}},x_{4})}$}
			\label{fig:G1}
		\end{minipage}\hfill
		\begin{minipage}{0.5\textwidth}
			\centering
			\[\begin{tikzpicture}[x=0.45cm, y=0.45cm]
			\vertex[fill] (1) at (0,0) {};
			\vertex[fill] (2) at (4,0) {};
			\vertex[fill] (3) at (8,0) {};
			
			\vertex[fill] (5) at (2,2)  {};
			\vertex[fill] (6) at (6,2) {};
			\vertex[fill] (7) at (10,2) {};
			\vertex[fill] (8) at (2,4) {};
			\vertex[fill] (9) at (6,4) {};
			\vertex[fill] (10) at (10,4) {};
			\vertex[fill] (11) at (2,6) {};
			\vertex[fill] (12) at (6,6) {};
			\vertex[fill] (13) at (10,6)  {};
			\vertex[fill] (14) at (2,0.5) {};
			\vertex[fill] (15) at (1.5,0.5) {};
			\vertex[fill] (16) at (2.5,0.5) {};
			\vertex[fill] (17) at (2,2.5) {};
			\vertex[fill] (18) at (1.5,2.5) {};
			\vertex[fill] (20) at (2.5,2.5) {};
			\vertex[fill] (21) at (2,7.5) {};
			\vertex[fill] (22) at (1.5,7.5) {};
			\vertex[fill] (23) at (2.5,7.5) {};
			\vertex[fill] (24) at (6,0.5){};
			\vertex[fill] (25) at (5.5,0.5) {};
			\vertex[fill] (26) at (6.5,0.5) {};
			\vertex[fill] (30) at (6,2.5) {};
			\vertex[fill] (31) at (5.5,2.5){};
			\vertex[fill] (32) at (6.5,2.5) {};
			\vertex[fill] (36) at (6,7.5) {};
			\vertex[fill] (37) at (5.5,7.5) {};
			\vertex[fill] (38) at (6.5,7.5) {};
			\vertex[fill] (42) at (0,-1.5) {};
			\vertex[fill] (43) at (-0.5,-1.5) {};
			\vertex[fill] (44) at (0.5,-1.5) {};
			\vertex[fill] (45) at (3.5,-1.5){};
			\vertex[fill] (46) at (4,-1.5) {};
			\vertex[fill] (47) at (4.5,-1.5) {};
			\vertex[fill] (51) at (11.5,-1.5) {};
			\vertex[fill] (52) at (12,-1.5) {};
			\vertex[fill] (53) at (12.5,-1.5) {};
			
			\path 
			(1) edge (2)
			(1) edge (5)
			(6) edge (2)
			(5) edge (2)
			(1) edge (8)
			(2) edge (8)
			(2) edge (9)
			(1) edge (11)
			(2) edge (11)
			(2) edge (12)
			(5) edge (14)
			(5) edge (15)
			(5) edge (16)
			(8) edge (17)
			(8) edge (18)
			(8) edge (20)
			(11) edge (21)
			(11) edge (23)
			(11) edge (22)
			(6) edge (24)
			(6) edge (25)
			(6) edge (26)
			(9) edge (30)
			(9) edge (31)
			(9) edge (32)
			(12) edge (36)
			(12) edge (37)
			(12) edge (38)
			(1) edge (42)
			(1) edge (43)
			(1) edge (44)
			(2) edge (45)
			(2) edge (46)
			(2) edge (47)

			;
			\end{tikzpicture}\]
			\caption{$G_{(I_{3,3,3}:x_{4})}$}
			\label{fig:G2}
		\end{minipage}
	\end{figure}
	\begin{figure}[H]
		\begin{minipage}{0.5\textwidth}
			\centering
			\[\begin{tikzpicture}[x=1cm, y=1cm]
			\vertex[fill] (1) at (0,0) {};
			\vertex[fill] (2) at (1,0) {};
			\vertex[fill] (3) at (0,1) {};
			\vertex[fill] (4) at (1,1) {};
			\vertex[fill] (5) at (2,0.5) {};
			\vertex[fill] (6) at (3,0.5) {};
			\vertex[fill] (7) at (-1,0.5) {};
			\vertex[fill] (8) at (-2,0.5) {};
			\vertex[fill] (9) at (0.5,2) {};
			\vertex[fill] (10) at (0.5,-1) {};
			\vertex[fill] (11) at (0.5,-2) {};
			\vertex[fill] (12) at (0.5,3) {};
			\vertex[fill] (13) at (1.3,0.3) {};
			\vertex[fill] (14) at (1.3,0.5) {};
			\vertex[fill] (15) at (1.3,0.7) {};
			\vertex[fill] (16) at (3.7,0.3) {};
			\vertex[fill] (17) at (3.7,0.5) {};
			\vertex[fill] (18) at (3.7,0.7) {};
			\vertex[fill] (19) at (0.3,1.3) {};
			\vertex[fill] (20) at (0.5,1.3) {};
			\vertex[fill] (21) at (0.7,1.3) {};
			\vertex[fill] (22) at (-0.3,0.3) {};
			\vertex[fill] (23) at (-0.3,0.5) {};
			\vertex[fill] (24) at (-0.3,0.7) {};
			\vertex[fill] (25) at (-2.7,0.3) {};
			\vertex[fill] (26) at (-2.7,0.5) {};
			\vertex[fill] (27) at (-2.7,0.7) {};
			\vertex[fill] (28) at (0.3,-0.3) {};
			\vertex[fill] (29) at (0.5,-0.3) {};
			\vertex[fill] (30) at (0.7,-0.3) {};
			\vertex[fill] (31) at (0.3,-2.7) {};
			\vertex[fill] (32) at (0.5,-2.7) {};
			\vertex[fill] (33) at (0.7,-2.7) {};
			\vertex[fill] (34) at (0.3,3.7) {};
			\vertex[fill] (35) at (0.5,3.7) {};
			\vertex[fill] (36) at (0.7,3.7) {};
			\vertex[fill] (37) at (1.65,1.65) {};
			\vertex[fill] (38) at (1.5,1.8) {};
			\vertex[fill] (39) at (1.8,1.5) {};
			\vertex[fill] (40) at (-0.65,1.65) {};
			\vertex[fill] (41) at (-0.5,1.8) {};
			\vertex[fill] (42) at (-0.8,1.5) {};
			
			\vertex[fill] (46) at (1.5,-0.8) {};
			\vertex[fill] (47) at (1.8,-0.5) {};
			\vertex[fill] (48) at (1.65,-0.65) {};
			\path 
			
			(2) edge (4)
			(3) edge (4)

			(2) edge (10)
			
			(2) edge (11)
			(2) edge (5)
			(4) edge (5)
			(2) edge (6)
			(4) edge (6)
			(3) edge (9)
			(4) edge (12)
			(3) edge (12)
			(4) edge (9)
			
			(3) edge (8)
			
			(3) edge (7)
			(9) edge (19)
			(9) edge (20)
			(9) edge (21)
			(5) edge (13)
			(5) edge (14)
			(5) edge (15)
			(10) edge (28)
			(10) edge (29)
			(10) edge (30)
			(7) edge (22)
			(7) edge (23)
			(7) edge (24)
			(12) edge (34)
			(12) edge (35)
			(12) edge (36)
			(6) edge (18)
			(6) edge (16)
			(6) edge (17)
			(11) edge (31)
			(11) edge (32)
			(11) edge (33)
			(8) edge (25)
			(8) edge (26)
			(8) edge (27)
			(2) edge (46)
			(2) edge (47)
			(2) edge (48)
			(3) edge (40)
			(3) edge (41)
			(3) edge (42)
			(4) edge (37)
			(4) edge (38)
			(4) edge (39)
			;
			\end{tikzpicture}\]
			\caption{$G_{(J_{4,2,3},x_{4})}$}
			\label{fig:G3}
		\end{minipage}\hfill
		\begin{minipage}{0.5\textwidth}
			\centering
			\[\begin{tikzpicture}[x=1.1cm, y=1.1cm]
			
			\vertex[fill] (2) at (1,0) {};
			\vertex[fill] (3) at (0,1) {};
			\vertex[fill] (4) at (1,1) {};
			\vertex[fill] (5) at (2,0.5) {};
			\vertex[fill] (6) at (3,0.5) {};
			\vertex[fill] (7) at (-1,0.5) {};
			\vertex[fill] (8) at (-2,0.5) {};
			\vertex[fill] (9) at (0.5,2) {};
			\vertex[fill] (10) at (0.5,-1) {};
			\vertex[fill] (11) at (0.5,-2) {};
			\vertex[fill] (12) at (0.5,3) {};
			\vertex[fill] (13) at (1.3,0.3) {};
			\vertex[fill] (14) at (1.3,0.5) {};
			\vertex[fill] (15) at (1.3,0.7) {};
			\vertex[fill] (16) at (3.7,0.3) {};
			\vertex[fill] (17) at (3.7,0.5) {};
			\vertex[fill] (18) at (3.7,0.7) {};
			\vertex[fill] (19) at (0.3,1.3) {};
			\vertex[fill] (20) at (0.5,1.3) {};
			\vertex[fill] (21) at (0.7,1.3) {};
			\vertex[fill] (34) at (0.3,3.7) {};
			\vertex[fill] (35) at (0.5,3.7) {};
			\vertex[fill] (36) at (0.7,3.7) {};
			\vertex[fill] (37) at (1.65,1.65) {};
			\vertex[fill] (38) at (1.5,1.8) {};
			\vertex[fill] (39) at (1.8,1.5) {};
			\vertex[fill] (43) at (-0.65,-0.65) {};
			\vertex[fill] (44) at (-0.8,-0.5) {};
			\vertex[fill] (45) at (-0.5,-0.8) {};
			
			\path

			(4) edge (5)
			(4) edge (6)
			(4) edge (12)
			(4) edge (9)
			(9) edge (19)
			(9) edge (20)
			(9) edge (21)
			(5) edge (13)
			(5) edge (14)
			(5) edge (15)
			(12) edge (34)
			(12) edge (35)
			(12) edge (36)
			(6) edge (18)
			(6) edge (16)
			(6) edge (17)
			(4) edge (37)
			(4) edge (38)
			(4) edge (39)
			;
			\end{tikzpicture}\]
			\caption{$G_{(J_{4,2,3}:x_{4})}$}
			\label{fig:G4}
		\end{minipage}
	\end{figure}	
	\begin{Remark}\label{R1}
		{\em
			We may have the description  $S^\star_{-2,p}/\mathit{I}^\star_{-2,p}$, $S^\star_{-1,p}/\mathit{I}^\star_{-1,p}$, $S^\star_{0,p}/\mathit{I}^\star_{0,p}$, $S^\star_{-1,p,q}/\mathit{I}^\star_{-1,p,q}$ or $S^\star_{0,p,q}/\mathit{I}^\star_{0,p,q}$ while proving our results by induction on $n$. In that case we define
			\begin{itemize}
				\item $ S^\star_{-2,p}/\mathit{I}^\star_{-2,p}\cong S^\star_{-1,p}/\mathit{I}^\star_{-1,p}\cong K,$
				\item $ S^\star_{0,p}/\mathit{I}^\star_{0,p}\cong K[V(\mathcal{S}_{p})]/I(\mathcal{S}_{p})$,
             \item 
             $ S^\star_{-1,p,q}/\mathit{I}^\star_{-1,p,q}\cong K,$
		\item 
				$ S^\star_{0,p,q}/\mathit{I}^\star_{0,p,q}\cong K[V(\mathcal{S}_{p,q})]/I(\mathcal{S}_{p,q})$.
			\end{itemize} 
                 
		} 
	\end{Remark} 
	\begin{Lemma}\label{Lem1}
 {\em
		Let $n,p\geq1$. Then  $\reg(S^\star_{n,p}/\mathit{I}^\star_{n,p})=\left\lceil{\frac{n+1}{2}}\right\rceil$.
  }
	\end{Lemma}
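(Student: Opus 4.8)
The plan is to prove the statement by strong induction on $n$, applying Theorem \ref{reg} with the carefully chosen spine variable $y=x_n$. Before the inductive step I would record the base values. For $n\in\{-2,-1,0\}$ the formula $\lceil (n+1)/2\rceil$ agrees with Remark \ref{R1} (giving $0,0,1$ respectively, the last by Theorem \ref{star}(b)). The genuine base case $n=1$ is handled directly by applying Theorem \ref{reg} to $y=x_1$: here $(I^\star_{1,p}:x_1)$ contains all neighbors $x_2,y_{11},\dots,y_{1p}$ of $x_1$ and leaves only free variables, so $\reg(S^\star_{1,p}/(I^\star_{1,p}:x_1))=0$, while $\mathcal{T}^\star_1(p)-x_1$ is the single star $\mathcal{S}_{2p}$ centred at $x_2$, so $\reg(S^\star_{1,p}/(I^\star_{1,p},x_1))=1$ by Theorem \ref{star}(b); case (c) then yields $1=\lceil 2/2\rceil$.

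For the inductive step $n\ge 2$ the first point to establish is the identification of the two quotients as cyclic modules over smaller graphs of the same family. Removing $x_n$ entirely: in $\mathcal{T}^\star_n(p)-x_n$ the apexes $y_{(n-1)k}$ become pendants on $x_{n-1}$, while the apexes $y_{nk}$ together with the pendants $y_{(n+1)j}$ become the $2p$ leaves of a star centred at $x_{n+1}$; hence $\mathcal{T}^\star_n(p)-x_n=\mathcal{T}^\star_{n-2}(p)\sqcup\mathcal{S}_{2p}$ as a disjoint union. By Lemma \ref{circulentt}, Theorem \ref{star}(b) and the induction hypothesis, $\reg(S^\star_{n,p}/(I^\star_{n,p},x_n))=\lceil (n-1)/2\rceil+1=\lceil (n+1)/2\rceil$. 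For the colon ideal, $(I^\star_{n,p}:x_n)$ is generated by the neighbors $x_{n-1},x_{n+1},y_{(n-1)k},y_{nk}$ of $x_n$ together with the edge ideal of $\mathcal{T}^\star_n(p)-N[x_n]$; the latter graph is $\mathcal{T}^\star_{n-3}(p)$ (the surviving apexes $y_{(n-2)k}$ drop to pendants on $x_{n-2}$) plus isolated vertices, so by Lemma \ref{ref6}(b) and the induction hypothesis $\reg(S^\star_{n,p}/(I^\star_{n,p}:x_n))=\lceil (n-2)/2\rceil$.

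Finally I would compare the two quantities: since $\lceil (n-2)/2\rceil<\lceil (n+1)/2\rceil$ for every $n\ge 2$, we are squarely in case (c) of Theorem \ref{reg}, whence $\reg(S^\star_{n,p}/I^\star_{n,p})=\reg(S^\star_{n,p}/(I^\star_{n,p},x_n))=\lceil (n+1)/2\rceil$, completing the induction.

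The conceptual crux, and the step I expect to be the main obstacle, is the choice of the splitting variable rather than the bookkeeping. The more obvious choice $y=x_{n+1}$ makes $(I^\star_{n,p}:x_{n+1})$ reduce to $\mathcal{T}^\star_{n-2}(p)$ and $(I^\star_{n,p},x_{n+1})$ to $\mathcal{T}^\star_{n-1}(p)$, which lands the even-$n$ case in the ambiguous case (b) of Theorem \ref{reg} and forces a separate lower-bound argument (e.g.\ exhibiting an induced matching of size $\lceil (n+1)/2\rceil$). Splitting at $x_n$ instead peels off a whole $2p$-star of regularity $1$, which is exactly what pushes both parities into the clean case (c); rigorously verifying the two induced-subgraph identifications $\mathcal{T}^\star_n(p)-x_n=\mathcal{T}^\star_{n-2}(p)\sqcup\mathcal{S}_{2p}$ and $\mathcal{T}^\star_n(p)-N[x_n]=\mathcal{T}^\star_{n-3}(p)\sqcup(\text{isolated vertices})$, including the degenerate small-$n$ instances via Remark \ref{R1}, is the only genuinely technical part.
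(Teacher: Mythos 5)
Your proposal is correct and follows essentially the same route as the paper: induction on $n$ splitting at $y=x_n$, with the identifications $S^\star_{n,p}/(I^\star_{n,p},x_n)\cong S^\star_{n-2,p}/I^\star_{n-2,p}\otimes_K K[V(\mathcal{S}_{2p})]/I(\mathcal{S}_{2p})$ and $S^\star_{n,p}/(I^\star_{n,p}:x_n)\cong S^\star_{n-3,p}/I^\star_{n-3,p}$ tensored with a polynomial ring, followed by Lemma \ref{circulentt}, Theorem \ref{star}(b), Lemma \ref{ref6}(b) and case (c) of Theorem \ref{reg}. The only cosmetic difference is that the paper treats $n=1$ and $n=2$ as explicit base cases before inducting from $n\geq 3$, whereas you fold $n=2$ into the inductive step via the degenerate modules of Remark \ref{R1}; both are valid.
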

	\begin{proof}	
We will prove this result by induction on $n$. We have the following isomorphisms:
    	\begin{equation}\label{11}
	S^\star_{n,p}/(\mathit{I}^\star_{n,p}:\mathit{x}_{n})\cong S^\star_{n-3,p}/\mathit{I}^\star_{n-3,p}\bigotimes_KK[\{\mathit{x}_{n}\}\cup\mathit{B}_{n+1}],
	\end{equation} 
  and  \begin{equation}\label{22}
	S^\star_{n,p}/(\mathit{I}^\star_{n,p},\mathit{x}_{n})\cong S^\star_{n-2,p}/\mathit{I}^\star_{n-2,p}\bigotimes_KK[V({\mathcal{S}_{2p}})]/I(\mathit{\mathcal{S}}_{2p}).
	\end{equation}
 If $n=1$, then Eq \ref{11} becomes, $S^\star_{1,p}/(\mathit{I}^\star_{1,p}:\mathit{x}_{1})\cong S^\star_{-2,p}/\mathit{I}^\star_{-2,p}\bigotimes_KK[\{x_{1}\}\cup B_2].$ By Remark \ref{R1}, $S^\star_{1,p}/(\mathit{I}^\star_{1,p}:\mathit{x}_{1})\cong K\bigotimes_KK[\{x_{1}\}\cup B_2]\cong K[\{x_{1}\}\cup B_2]$ 
 and so, $\reg(S^\star_{1,p}/(\mathit{I}^\star_{1,p}:\mathit{x}_{1}))=0.$ Now using Eq \ref{22}, we have, $S^\star_{1,p}/(\mathit{I}^\star_{1,p},\mathit{x}_{1})\cong S^\star_{-1,p}/\mathit{I}^\star_{-1,p}\bigotimes_KK[V({\mathcal{S}_{2p}})]/I(\mathit{\mathcal{S}}_{2p}),$ again by Remark \ref{R1} $$S^\star_{1,p}/(\mathit{I}^\star_{1,p},\mathit{x}_{1})\cong K\bigotimes_KK[V({\mathcal{S}_{2p}})]/I(\mathit{\mathcal{S}}_{2p})\cong K[V({\mathcal{S}_{2p}})]/I(\mathit{\mathcal{S}}_{2p}).$$ Using Theorem \ref{star}(b),   $\reg(S^\star_{1,p}/(\mathit{I}^\star_{1,p},\mathit{x}_{1}))=1.$ By Theorem \ref{reg}(c), we get the desired result, that is, $\reg(S^\star_{1,p}/\mathit{I}^\star_{1,p})=1=\left\lceil{\frac{1+1}{2}}\right\rceil.$\\ If $n=2$, then Eq \ref{11} has the form, $S^\star_{2,p}/(\mathit{I}^\star_{2,p}:\mathit{x}_{2})\cong S^\star_{-1,p}/\mathit{I}^\star_{-1,p}\bigotimes_KK[\{x_{2}\}\cup B_3].$ By Remark \ref{R1}, $S^\star_{2,p}/(\mathit{I}^\star_{2,p}:\mathit{x}_{2})\cong K\bigotimes_KK[\{x_{2}\}\cup B_3]\cong K[\{x_{2}\}\cup B_3]$ and $\reg(S^\star_{2,p}/(\mathit{I}^\star_{2,p}:\mathit{x}_{2}))=0.$ By Eq \ref{22}, we get, $S^\star_{2,p}/(\mathit{I}^\star_{2,p},\mathit{x}_{2})\cong S^\star_{0,p}/\mathit{I}^\star_{0,p}\bigotimes_KK[V({\mathcal{S}_{2p}})]/I(\mathit{\mathcal{S}}_{2p}),$ which by using Remark \ref{R1} follows $$S^\star_{2,p}/(\mathit{I}^\star_{2,p},\mathit{x}_{2})\cong K[V(\mathcal{S}_{p})]/I(\mathcal{S}_{p})\bigotimes_KK[V({\mathcal{S}_{2p}})]/I(\mathit{\mathcal{S}}_{2p}).$$ Using Lemma \ref{circulentt}, we have, $\reg(S^\star_{2,p}/(\mathit{I}^\star_{2,p},\mathit{x}_{2}))= \reg(K[V(\mathcal{S}_{p})]/I(\mathcal{S}_{p}))+\reg(K[V({\mathcal{S}_{2p}})]/I(\mathit{\mathcal{S}}_{2p})).$ By Theorem \ref{star}(b), $\reg(S^\star_{2,p}/(\mathit{I}^\star_{2,p},\mathit{x}_{2}))=1+1=2.$ The required result follows by Theorem \ref{reg}(c), that is, $\reg(S^\star_{2,p}/\mathit{I}^\star_{2,p})=2=\left\lceil{\frac{2+1}{2}}\right\rceil$. Now let $n\geq3$, we will prove the result by induction on $n$. By Eq \ref{11} and Lemma \ref{ref6}(b), $\reg(S^\star_{n,p}/(\mathit{I}^\star_{n,p}:\mathit{x}_{n}))=\reg(S^\star_{n-3,p}/\mathit{I}^\star_{n-3,p}),$ so by induction, $\reg(S^\star_{n,p}/(\mathit{I}^\star_{n,p}:\mathit{x}_{n}))=\left\lceil{\frac{n-3+1}{2}}\right\rceil=\left\lceil{\frac{n-2}{2}}\right\rceil.$ By applying Lemma \ref{circulentt} on Eq \ref{22}  	$$\reg\big(S^\star_{n,p}/(\mathit{I}^\star_{n,p},\mathit{x}_{n})\big)=\reg(S^\star_{n-2,p}/\mathit{I}^\star_{n-2,p})+\reg(K[V({\mathcal{S}}_{2p})]/\mathit{I(\mathcal{S}}_{2p})).$$ 
 By induction and Theorem \ref{star}(b),
$\reg(S^\star_{n,p}/(\mathit{I}^\star_{n,p},\mathit{x}_{n}))=\left\lceil{\frac{n-2+1}{2}}\right\rceil+1=\left\lceil{\frac{n+1}{2}}\right\rceil.$ Hence by Theorem \ref{reg}(c), $\reg(S^\star_{n,p}/\mathit{I}^\star_{n,p})=\left\lceil{\frac{n+1}{2}}\right\rceil.$	
		
	\end{proof}
	\begin{Theorem}\label{Th1}
 {\em
		Let $n,p\geq 1$. Then $\reg(\mathit{S}_{n,p}/\mathit{I}_{n,p})=\left\lceil{\frac{n+1}{2}}\right\rceil$.
  }
	\end{Theorem}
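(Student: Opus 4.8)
The plan is to run the same deletion/contraction machinery used for Lemma \ref{Lem1}, but applied to $I_{n,p}$ with respect to the \emph{interior} path vertex $x_n$ (rather than the endpoint $x_{n+1}$), since this is precisely the choice that lands us in the unambiguous case of Theorem \ref{reg}. Note that no new induction is needed: both subquotients will reduce to $\mathcal{T}^\star$-type modules whose regularity is already delivered by Lemma \ref{Lem1}. First I would record the two isomorphisms coming from splitting on $x_n$. Deleting $x_n$ disconnects $\mathcal{T}_n(p)$ into the copy of $\mathcal{T}^\star_{n-2}(p)$ sitting on $x_1,\dots,x_{n-1}$ (the vertices $y_{(n-1)k}$ becoming pendants at $x_{n-1}$) together with a disjoint star $\mathcal{S}_p$ having centre $x_{n+1}$ and leaves $y_{nk}$; passing to the neighbourhood instead, $(I_{n,p}:x_n)$ kills $N(x_n)=\{x_{n-1},x_{n+1}\}\cup B_{n-1}\cup B_n$ and leaves the induced copy of $\mathcal{T}^\star_{n-3}(p)$ on $x_1,\dots,x_{n-2}$ with $x_n$ a free variable. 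Concretely,
\begin{equation*}
S_{n,p}/(I_{n,p},x_n)\cong S^\star_{n-2,p}/I^\star_{n-2,p}\bigotimes_K K[V(\mathcal{S}_p)]/I(\mathcal{S}_p),\qquad
S_{n,p}/(I_{n,p}:x_n)\cong S^\star_{n-3,p}/I^\star_{n-3,p}\bigotimes_K K[\{x_n\}],
\end{equation*}
where the degenerate small indices $n\in\{1,2,3\}$ are read through the conventions of Remark \ref{R1}.

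Next I would compute the two regularities. For the deletion, Lemma \ref{circulentt} splits the regularity over the two components, Theorem \ref{star}(b) gives $\reg(K[V(\mathcal{S}_p)]/I(\mathcal{S}_p))=1$, and Lemma \ref{Lem1} together with Lemma \ref{ref6}(b) gives $\reg(S^\star_{n-2,p}/I^\star_{n-2,p})=\lceil\tfrac{n-1}{2}\rceil$, so that
\begin{equation*}
\reg\big(S_{n,p}/(I_{n,p},x_n)\big)=\Big\lceil\tfrac{n-1}{2}\Big\rceil+1=\Big\lceil\tfrac{n+1}{2}\Big\rceil .
\end{equation*}
For the colon, Lemma \ref{ref6}(b) and Lemma \ref{Lem1} give $\reg\big(S_{n,p}/(I_{n,p}:x_n)\big)=\lceil\tfrac{n-2}{2}\rceil$.

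Finally I would compare the two numbers across both parities of $n$: one checks directly that $\lceil\tfrac{n-2}{2}\rceil<\lceil\tfrac{n+1}{2}\rceil$ for every $n\geq1$ (the gap is $2$ when $n$ is even and $1$ when $n$ is odd). Hence $\reg(S_{n,p}/(I_{n,p}:x_n))<\reg(S_{n,p}/(I_{n,p},x_n))$, and Theorem \ref{reg}(c) yields $\reg(S_{n,p}/I_{n,p})=\reg(S_{n,p}/(I_{n,p},x_n))=\lceil\tfrac{n+1}{2}\rceil$, as claimed.

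The step needing the most care is the choice of splitting vertex. Splitting on the leaf-end vertex $x_{n+1}$ would instead give $\reg(S_{n,p}/(I_{n,p}:x_{n+1}))=\lceil\tfrac{n-1}{2}\rceil$ and $\reg(S_{n,p}/(I_{n,p},x_{n+1}))=\lceil\tfrac{n}{2}\rceil$, which \emph{coincide} when $n$ is even and force the ambiguous case (b) of Theorem \ref{reg}, leaving the value undetermined without an extra lower bound. Splitting on $x_n$ keeps a strict inequality for all $n$, so the whole argument stays in case (c); the only other thing to verify carefully is the combinatorial identification of the two induced subgraphs with $\mathcal{T}^\star_{n-2}(p)\sqcup\mathcal{S}_p$ and $\mathcal{T}^\star_{n-3}(p)$, together with the degenerate base indices handled by Remark \ref{R1}.
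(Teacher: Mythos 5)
Your proposal is correct and is essentially the paper's own proof: the paper splits on the same vertex $x_n$, obtains exactly your two isomorphisms $S_{n,p}/(I_{n,p},x_n)\cong S^\star_{n-2,p}/I^\star_{n-2,p}\otimes_K K[V(\mathcal{S}_p)]/I(\mathcal{S}_p)$ and $S_{n,p}/(I_{n,p}:x_n)\cong S^\star_{n-3,p}/I^\star_{n-3,p}\otimes_K K[x_n]$, computes the same two regularities via Lemma \ref{Lem1}, Lemma \ref{circulentt}, Theorem \ref{star}(b) and Lemma \ref{ref6}(b), and concludes by Theorem \ref{reg}(c). The only cosmetic difference is that the paper phrases the argument as an induction with explicit $n=1,2$ base cases, whereas you correctly observe that Lemma \ref{Lem1} together with the Remark \ref{R1} conventions already covers all cases without a fresh induction.
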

	\begin{proof}
The result will be proved by induction on $n$. Looking at the structure of the graph it is easy to see that we have the following isomorphisms:
  \begin{equation}\label{33}
\mathit{S}_{n,p}/(\mathit{I}_{n,p}:\mathit{x}_{n})\cong \mathit{S}^\star_{n-3,p}/\mathit{I}^\star_{n-3,p}\bigotimes_KK[x_{n}],
  \end{equation} 
  and
  \begin{equation}\label{44}
  \mathit{S}_{n,p}/(\mathit{I}_{n,p},\mathit{x}_{n})\cong \mathit{S}^\star_{n-2,p}/\mathit{I}^\star_{n-2,p}\bigotimes_{K}K[V({\mathcal{S}_{p}})]/I(\mathit{\mathcal{S}}_{p}).
  \end{equation}
		If $n=1$, then by Eq \ref{33}, we have $\mathit{S}_{1,p}/(\mathit{I}_{1,p}:\mathit{x}_{1})\cong (\mathit{S}^\star_{-2,p}/\mathit{I}^\star_{-2,p})\bigotimes_KK[x_{1}],$ which by using Remark \ref{R1} implies that $\mathit{S}_{1,p}/(\mathit{I}_{1,p}:\mathit{x}_{1})\cong K\bigotimes_KK[x_{1}]\cong K[x_{1}],$ so we get, $\reg(S_{1,p}/(\mathit{I}_{1,p}:\mathit{x}_{1}))=0.$ Now by Eq \ref{44},  $\mathit{S}_{1,p}/(\mathit{I}_{1,p},\mathit{x}_{1})\cong \mathit{S}^\star_{-1,p}/\mathit{I}^\star_{-1,p}\bigotimes_KK[V({\mathcal{S}_{p}})]/I(\mathit{\mathcal{S}}_{p})$ and by Remark \ref{R1} $$\mathit{S}_{1,p}/(\mathit{I}_{1,p},\mathit{x}_{1})\cong K\bigotimes_KK[V({\mathcal{S}_{p}})]/I(\mathit{\mathcal{S}}_{p})\cong K[V({\mathcal{S}_{p}})]/I(\mathit{\mathcal{S}}_{p}).$$ By Theorem \ref{star}(b), $\reg(S_{1,p}/(\mathit{I}_{1,p},\mathit{x}_{1}))=1.$ The required result follows by Theorem \ref{reg}(c), that is, $\reg(S_{1,p}/\mathit{I}_{1,p})=1=\left\lceil{\frac{1+1}{2}}\right\rceil.$ Similarly, if $n=2,$ then the desired result can easily be verified, that is, $\reg(S_{2,p}/\mathit{I}_{2,p})=2=\left\lceil{\frac{2+1}{2}}\right\rceil$. Let $n\geq3$. Applying Lemma \ref{ref6}(b) on  Eq \ref{33}, we have $\reg(\mathit{S}_{n,p}/(\mathit{I}_{n,p}:\mathit{x}_{n}))= \reg(\mathit{S}^\star_{n-3,p}/\mathit{I}^\star_{n-3,p}).$ By using Lemma \ref{Lem1} we have $$\reg(\mathit{S}_{n,p}/(\mathit{I}_{n,p}:\mathit{x}_{n}))=\left\lceil{\frac{n-3+1}{2}}\right\rceil=\left\lceil{\frac{n-2}{2}}\right\rceil.$$ Applying Lemma \ref{circulentt} on Eq \ref{44} we get $$\reg\big(S_{n,p}/(\mathit{I}_{n,p},\mathit{x}_{n})\big)=\reg(S^\star_{n-2,p}/\mathit{I}^\star_{n-2,p})+\reg(K[V({\mathcal{S}}_{p})]/\mathit{I(\mathcal{S}}_{p})).$$ By Lemma \ref{Lem1} and Theorem \ref{star}(b), we have
$\reg(\mathit{S}_{n,p}/(\mathit{I}_{n,p},\mathit{x}_{n}))=\left\lceil{\frac{n-2+1}{2}}\right\rceil+1=\left\lceil{\frac{n+1}{2}}\right\rceil.$
	    Hence by Theorem \ref{reg}(c),
		$\reg(\mathit{S}_{n,p}/(\mathit{I}_{n,p}))=\left\lceil{\frac{n+1}{2}}\right\rceil.$
	\end{proof}
	\begin{Lemma}\label{Lem2}
		{\em
			Let $n,p,q\geq1$. Then 
			\begin{itemize}
				\item[(a)] $\depth(S^\star_{n,p,q}/\mathit{I}^\star_{n,p,q})=\sdepth(S^\star_{n,p,q}/\mathit{I}^\star_{n,p,q})=(p+q)(n+1),$
				\item[(b)]$\reg(S^\star_{n,p,q}/\mathit{I}^\star_{n,p,q})=(n+1)p,$
				\item[(c)]$\pdim(S^\star_{n,p,q}/\mathit{I}^\star_{n,p,q})=(1+pq)(n+1)$.
				
			\end{itemize} 
		} 
	\end{Lemma}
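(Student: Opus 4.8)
The plan is to run an induction on $n$ that mirrors the proof of Lemma \ref{Lem1}, peeling off the path vertex $x_n$. First I would record the two structural isomorphisms obtained by inspecting the closed neighborhood $N[x_n]=\{x_n,x_{n-1},x_{n+1}\}\cup B_{n-1}\cup B_n\cup A_n$:
\[ S^\star_{n,p,q}/(I^\star_{n,p,q}:x_n)\cong S^\star_{n-3,p,q}/I^\star_{n-3,p,q}\bigotimes_K\Big(\bigotimes_{k=1}^{p}K[V(\mathcal{S}_{q})]/I(\mathcal{S}_{q})\Big)\bigotimes_K K[\{x_n\}\cup A_{n-1}\cup A_{n+1}\cup C_{n-1}\cup C_n], \]
\[ S^\star_{n,p,q}/(I^\star_{n,p,q},x_n)\cong S^\star_{n-2,p,q}/I^\star_{n-2,p,q}\bigotimes_K K[V(\mathcal{S}_{2p,q})]/I(\mathcal{S}_{2p,q})\bigotimes_K K[A_n]. \]
The first holds because deleting $N[x_n]$ leaves $\brs_q(\mathcal{T}^\star_{n-3}(p))$ on the left, the $p$ stars $\mathcal{S}_q$ formed by the pendants $B_{n+1}$ with their bristles once $x_{n+1}$ is removed, and a collection of now-isolated bristle variables; the second holds because deleting $x_n$ turns the two apex classes $B_{n-1},B_n$ into pendants, giving $\brs_q(\mathcal{T}^\star_{n-2}(p))$ together with the bristled star $\mathcal{S}_{2p,q}$ around $x_{n+1}$ and the isolated bristles $A_n$. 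The base cases $n=1,2$ I would verify directly from these isomorphisms and Remark \ref{R1}.

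For (b) the conclusion is then immediate. By Lemma \ref{circulentt}, Lemma \ref{ref6}(b), Theorem \ref{star}(b) and Corollary \ref{bstar}(b), the two isomorphisms give $\reg(S^\star_{n,p,q}/(I^\star_{n,p,q}:x_n))=(n-2)p+p=(n-1)p$ and $\reg(S^\star_{n,p,q}/(I^\star_{n,p,q},x_n))=(n-1)p+2p=(n+1)p$ by the inductive hypothesis. Since $p\geq1$ forces $(n-1)p<(n+1)p$, Theorem \ref{reg}(c) yields $\reg(S^\star_{n,p,q}/I^\star_{n,p,q})=(n+1)p$.

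For (a), the lower bounds $\depth,\sdepth\geq(p+q)(n+1)$ follow by feeding the two isomorphisms into Lemma \ref{depth} and Lemma \ref{sdepth}, computing the two ends by the additivity/tensor results Lemma \ref{ref3}, Lemma \ref{ref5}, Lemma \ref{ref6}(a), Theorem \ref{star}(a) and Corollary \ref{bstar}(a); by induction the sum quotient has depth exactly $(p+q)(n+1)$ while the colon quotient has depth $(p+q)(n+1)+(2p-1)(q-1)\geq(p+q)(n+1)$, so the minimum is $(p+q)(n+1)$. The matching upper bound is the step needing a separate idea, since the colon and sum depths differ: I would instead colon by the monomial $l=\prod y_{ij}$ running over all apex and pendant vertices, which is an independent set so $l\notin I^\star_{n,p,q}$; then $(I^\star_{n,p,q}:l)$ kills precisely all path vertices and all apex bristles, leaving the polynomial ring on the $(n+1)p$ apex variables together with the $(n+1)q$ path-bristle variables. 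Hence $\depth(S^\star_{n,p,q}/(I^\star_{n,p,q}:l))=\sdepth(S^\star_{n,p,q}/(I^\star_{n,p,q}:l))=(p+q)(n+1)$, and Corollary \ref{c1} and Proposition \ref{c2} give the reverse inequalities. Part (c) then follows from Theorem \ref{auss}, the ambient ring having $(1+p)(1+q)(n+1)$ variables and $(1+p)(1+q)-(p+q)=1+pq$.

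The main obstacle is getting the two recursive isomorphisms exactly right, in particular tracking which bristle classes survive and that the pendant class $B_{n+1}$ collapses to $p$ copies of $\mathcal{S}_q$ in the colon but to the single bristled star $\mathcal{S}_{2p,q}$ in the sum; the secondary difficulty, specific to (a), is recognising that Lemma \ref{depth} and Lemma \ref{sdepth} only deliver the lower bound, so that the upper bound must be produced separately from one well-chosen colon that trivialises the ideal into a polynomial ring.
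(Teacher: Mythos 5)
Your argument is correct, but it follows a genuinely different route from the paper's. The paper peels off the \emph{end} vertex $x_{n+1}$: its colon drops $n$ by $2$ (leaving $S^\star_{n-2,p,q}/I^\star_{n-2,p,q}$ tensored with a polynomial ring) and its sum drops $n$ by $1$ (producing $S^\star_{n-1,p,q}/I^\star_{n-1,p,q}$ tensored with $K[A_{n+1}]$ and $p$ copies of $K[V(\mathcal{S}_q)]/I(\mathcal{S}_q)$), and the depth upper bound comes from coloning by $y_{(n+1)1}\cdots y_{(n+1)p}$ only, which still leaves a copy of $S^\star_{n-1,p,q}/I^\star_{n-1,p,q}$ and therefore needs the inductive hypothesis a second time. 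You instead peel off $x_n$, so your recursion drops by $3$ (colon) and $2$ (sum), exactly mirroring Lemma \ref{Lem1}; I checked your two isomorphisms and the resulting numerics ($\reg$: $(n-1)p$ versus $(n+1)p$; $\depth$: $(p+q)(n+1)+(2p-1)(q-1)$ versus $(p+q)(n+1)$) and they are right. Your upper bound is the more elegant step: coloning by the product of \emph{all} apex vertices annihilates every path vertex and every apex bristle and leaves an honest polynomial ring in the $(n+1)p$ apexes and $(n+1)q$ path bristles, so Corollary \ref{c1} and Proposition \ref{c2} close both inequalities in one stroke with no further induction. The only housekeeping your version adds is that the $n=1$ colon reaches $S^\star_{-2,p,q}/I^\star_{-2,p,q}$, which Remark \ref{R1} does not define (it stops at $-1$ in the bristled case); you must either adjoin the convention $S^\star_{-2,p,q}/I^\star_{-2,p,q}\cong K$ or, as you indicate, just compute the $n=1,2$ cases directly, which is routine. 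With that noted, both parts (a) and (b) go through, and (c) follows from Theorem \ref{auss} exactly as you say since $S^\star_{n,p,q}$ has $(1+p)(1+q)(n+1)$ variables.
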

	\begin{proof}
		First we prove the result for depth and Stanley depth by induction on $n$. Consider the short exact sequence
		\begin{equation}\label{es1}
		0\longrightarrow S^\star_{n,p,q}/(\mathit{I}^\star_{n,p,q}:\mathit{x}_{n+1})\xrightarrow{\cdot \mathit{x}_{n+1}} S^\star_{n,p,q}/\mathit{I}^\star_{n,p,q}\longrightarrow S^\star_{n,p,q}/(\mathit{I}^\star_{n,p,q},\mathit{x}_{n+1})\longrightarrow 0,
		\end{equation}
		by applying Depth Lemma and Lemma \ref{sdepth} on Eq \ref{es1}, we get
		\begin{equation}\label{d}
\depth(S^\star_{n,p,q}/\mathit{I}^\star_{n,p,q})\geq \min\{\depth(S^\star_{n,p,q}/(\mathit{I}^\star_{n,p,q}:\mathit{x}_{n+1})), \depth(S^\star_{n,p,q}/(\mathit{I}^\star_{n,p,q},\mathit{x}_{n+1}))\},
		\end{equation}
        and
        \begin{equation}\label{ssd}
\sdepth(S^\star_{n,p,q}/\mathit{I}^\star_{n,p,q})\geq \min\{\sdepth(S^\star_{n,p,q}/(\mathit{I}^\star_{n,p,q}:\mathit{x}_{n+1})), \sdepth(S^\star_{n,p,q}/(\mathit{I}^\star_{n,p,q},\mathit{x}_{n+1}))\}.
		\end{equation}
		We have the following isomorphisms:
		\begin{equation}\label{55}
		S^\star_{n,p,q}/(\mathit{I}^\star_{n,p,q}:\mathit{x}_{n+1})\cong S^\star_{n-2,p,q}/\mathit{I}^\star_{n-2,p,q}\bigotimes_KK[\{\mathit{x}_{n+1}\}\cup\mathit{A}_{n}\cup\mathit{C}_{n}\cup\mathit{C}_{n+1}],
		\end{equation} 
\begin{equation}\label{66}
S^{\star}_{n,p,q}/(\mathit{I}^{\star}_{n,p,q},\mathit{x}_{n+1})\cong S^{\star}_{n-1,p,q}/\mathit{I}^{\star}_{n-1,p,q}\bigotimes_K K[\mathit{A}_{n+1}]\bigotimes_K\bigotimes_{k=1}^{p} K[V({\mathcal{S}_q})]/I\mathit{(\mathcal{S}_q)},
\end{equation} 
  and
\begin{equation}\label{low}
S^\star_{n,p,q}/(\mathit{I}^\star_{n,p,q}:\mathit{y}_{(n+1)1}\mathit{y}_{(n+1)2}\dots\mathit{y}_{(n+1)p})\cong S^\star_{n-1,p,q}/\mathit{I}^\star_{n-1,p,q}\bigotimes_KK[\mathit{A}_{n+1}\cup\mathit{B}_{n+1}].
\end{equation}
		If $n=1$, then by Eq \ref{55}, we have $S^\star_{1,p,q}/(\mathit{I}^\star_{1,p,q}:\mathit{x}_{2})\cong S^\star_{-1,p,q}/\mathit{I}^\star_{-1,p,q}\bigotimes_KK[\{x_2\}\cup A_1\cup C_1\cup C_2]$. By Remark \ref{R1}, $S^\star_{1,p,q}/(\mathit{I}^\star_{1,p,q}:\mathit{x}_{2})\cong K\bigotimes_KK[\{x_2\}\cup A_1\cup C_1\cup C_2],$ which implies that 
  \begin{equation}\label{77}
S^\star_{1,p,q}/(\mathit{I}^\star_{1,p,q}:\mathit{x}_{2})\cong K[\{x_2\}\cup A_1\cup C_1\cup C_2].
  \end{equation}
  Thus $\depth(S^\star_{1,p,q}/(\mathit{I}^\star_{1,p,q}:\mathit{x}_{2}))=1+q+pq+pq=2pq+q+1,$ and  $\sdepth(S^\star_{1,p,q}/(\mathit{I}^\star_{1,p,q}:\mathit{x}_{2}))= 1+q+pq+pq=2pq+q+1,$ Similarly, Eq \ref{66} has the form $S^\star_{1,p,q}/(\mathit{I}^\star_{1,p,q},\mathit{x}_{2})\cong S^\star_{0,p,q}/\mathit{I}^\star_{0,p,q}\bigotimes_{K}K[\mathit{A}_{2}]\bigotimes_{K}\bigotimes_{k=1}^{p}{}_{K}K[V({\mathcal{S}_q})]/I(\mathit{\mathcal{S}_q})$ and by Remark \ref{R1}
  \begin{equation}\label{88}
S^\star_{1,p,q}/(\mathit{I}^\star_{1,p,q},\mathit{x}_{2})\cong K[V(\mathcal{S}_{p,q})]/I(\mathcal{S}_{p,q})\bigotimes_KK[\mathit{A}_{2}]\bigotimes_K\bigotimes_{k=1}^{p}{}_KK[V({\mathcal{S}_q})]/I\mathit{(\mathcal{S}_q)}.
\end{equation}

Using Lemma \ref{ref3}, Lemma \ref{ref5} and Lemma \ref{ref6} (a)$$\depth(S^\star_{1,p,q}/(\mathit{I}^\star_{1,p,q},x_{2}))=\depth(K[V(\mathcal{S}_{p,q})]/I(\mathcal{S}_{p,q}))+\depth(K[\mathit{A}_{2}])+\\\sum_{k=1}^{p}\depth(K[V({\mathcal{S}_q})]/I\mathit{(\mathcal{S}_q)}),$$
and 
\begin{align*}
\sdepth(S^\star_{1,p,q}/(\mathit{I}^\star_{1,p,q},x_{2}))\geq\sdepth(K[V(\mathcal{S}_{p,q})]/I(\mathcal{S}_{p,q}))+\sdepth(K[\mathit{A}_{2}])+\\\sum_{k=1}^{p}\sdepth(K[V({\mathcal{S}_q})]/I\mathit{(\mathcal{S}_q)}).    
\end{align*}
By Corollary \ref{bstar}(a) and Theorem \ref{star}(a) $$\depth(S^\star_{1,p,q}/(\mathit{I}^\star_{1,p,q},\mathit{x}_{2}))=p+q+q+\sum_{k=1}^{p}1=2q+p+p=2(p+q),$$
and
$$\sdepth(S^\star_{1,p,q}/(\mathit{I}^\star_{1,p,q},\mathit{x}_{2}))\geq p+q+q+\sum_{k=1}^{p}1=2q+p+p=2(p+q).$$
It follows by Eq \ref{d} and Eq \ref{ssd} that, $\depth(S^\star_{1,p,q}/\mathit{I}^\star_{1,p,q}),\sdepth(S^\star_{1,p,q}/\mathit{I}^\star_{1,p,q})\geq2(p+q).$ Now since $\mathit{y}_{21}\mathit{y}_{22}\dots\mathit{y}_{2p}\notin\mathit{I}^\star_{1,p,q},$ so by Eq \ref{low}, we have $S^\star_{1,p,q}/(\mathit{I}^\star_{1,p,q}:\mathit{y}_{21}\mathit{y}_{22}\dots\mathit{y}_{2p})\cong S^\star_{0,p,q}/\mathit{I}^\star_{0,p,q}\bigotimes_KK[\mathit{A}_{2}\cup\mathit{B}_{2}]$ 
  and using Remark \ref{R1}, $S^\star_{1,p,q}/(\mathit{I}^\star_{1,p,q}:\mathit{y}_{21}\mathit{y}_{22}\dots\mathit{y}_{2p})\cong K[V(\mathcal{S}_{p,q})]/I(\mathcal{S}_{p,q})\bigotimes_KK[\mathit{A}_{2}\cup\mathit{B}_{2}].$ By Lemma \ref{ref6}(a) $$\depth(S^\star_{1,p,q}/(\mathit{I}^\star_{1,p,q}:\mathit{y}_{21}\mathit{y}_{22}\dots\mathit{y}_{2p}))=\depth(K[V(\mathcal{S}_{p,q})]/I(\mathcal{S}_{p,q}))+\depth(K[\mathit{A}_{2}\cup\mathit{B}_{2}]),$$
  and 
$$\sdepth(S^\star_{1,p,q}/(\mathit{I}^\star_{1,p,q}:\mathit{y}_{21}\mathit{y}_{22}\dots\mathit{y}_{2p}))=\sdepth(K[V(\mathcal{S}_{p,q})]/I(\mathcal{S}_{p,q}))+\sdepth(K[\mathit{A}_{2}\cup\mathit{B}_{2}]).$$
  Using Corollary \ref{bstar}(a), we get $\depth(S^\star_{1,p,q}/(\mathit{I}^\star_{1,p,q}:y_{21},y_{21}\mathit{y}_{22}\dots\mathit{y}_{2p}))=p+q+p+q=2(p+q),$ and $\sdepth(S^\star_{1,p,q}/(\mathit{I}^\star_{1,p,q}:y_{21},y_{21}\mathit{y}_{22}\dots\mathit{y}_{2p}))=p+q+p+q=2(p+q).$ By using Corollary \ref{c1} and Proposition \ref{c2}, we get $\depth(S^\star_{1,p,q}/\mathit{I}^\star_{1,p,q})\leq2(p+q),$ and $\sdepth(S^\star_{1,p,q}/\mathit{I}^\star_{1,p,q})\leq2(p+q).$ Therefore, $\depth(S^\star_{1,p,q}/\mathit{I}^\star_{1,p,q})=\sdepth(S^\star_{1,p,q}/\mathit{I}^\star_{1,p,q})=2(p+q).$ If $n=2$, then using the similar arguments and case $n=1$, one can easily prove that $\depth(S^\star_{2,p,q}/\mathit{I}^\star_{2,p,q})=\sdepth(S^\star_{2,p,q}/\mathit{I}^\star_{2,p,q})=3(p+q).$ 
Now let $n\geq3$. By Eq \ref{55} and Lemma \ref{ref6}(a) , it follows that $$\depth(S^\star_{n,p,q}/(\mathit{I}^\star_{n,p,q}:\mathit{x}_{n+1}))=\depth(S^\star_{n-2,p,q}/\mathit{I}^\star_{n-2,p,q})+\depth(K[\{\mathit{x}_{n+1}\}\cup\mathit{A}_{n}\cup\mathit{C}_{n}\cup\mathit{C}_{n+1}]),$$
and 
$$\sdepth(S^\star_{n,p,q}/(\mathit{I}^\star_{n,p,q}:\mathit{x}_{n+1}))=\sdepth(S^\star_{n-2,p,q}/\mathit{I}^\star_{n-2,p,q})+\sdepth(K[\{\mathit{x}_{n+1}\}\cup\mathit{A}_{n}\cup\mathit{C}_{n}\cup\mathit{C}_{n+1}]).$$
By induction $$\depth(S^\star_{n,p,q}/(\mathit{I}^\star_{n,p,q}:\mathit{x}_{n+1}))=(p+q)(n-2+1)+2pq+q+1=(p+q)n+2pq-p+1,$$
and 
$$\sdepth(S^\star_{n,p,q}/(\mathit{I}^\star_{n,p,q}:\mathit{x}_{n+1}))=(p+q)(n-2+1)+2pq+q+1=(p+q)n+2pq-p+1.$$
		Using Lemma \ref{ref3}, Lemma \ref{ref5} and Lemma \ref{ref6} (a) on Eq \ref{66}
\begin{align*}\depth\big(S^\star_{n,p,q}/(\mathit{I}^\star_{n,p,q},\mathit{x}_{n+1})\big
		)=& \depth(S^\star_{n-1,p,q}/\mathit{I}^\star_{n-1,p,q})+\depth(K[A_{n+1}])\\
  &+\sum_{k=1}^{p}\depth(K[V({\mathcal{S}_q})]/\mathit{I(\mathcal{S}_q)}),
  \end{align*}
  and
  \begin{align*}\sdepth\big(S^\star_{n,p,q}/(\mathit{I}^\star_{n,p,q},\mathit{x}_{n+1})\big
		)\geq& \sdepth(S^\star_{n-1,p,q}/\mathit{I}^\star_{n-1,p,q})+\sdepth(K[A_{n+1}])\\
  &+\sum_{k=1}^{p}\sdepth(K[V({\mathcal{S}_q})]/\mathit{I(\mathcal{S}_q)}).
  \end{align*}
		By induction and Theorem \ref{star}(a)
$$\depth(S^\star_{n,p,q}/(\mathit{I}^\star_{n,p,q},\mathit{x}_{n+1}))=(p+q)(n-1+1)+q+\sum_{k=1}^{p}1=(p+q)n+q+p=(p+q)(n+1),$$
and
$$\sdepth(S^\star_{n,p,q}/(\mathit{I}^\star_{n,p,q},\mathit{x}_{n+1}))\geq(p+q)(n-1+1)+q+\sum_{k=1}^{p}1=(p+q)n+q+p=(p+q)(n+1).$$
	 Again by using Eq \ref{d} and Eq \ref{ssd}, $\depth(S^\star_{n,p,q}/\mathit{I}^\star_{n,p,q}),\sdepth(S^\star_{n,p,q}/\mathit{I}^\star_{n,p,q})\geq (p+q)(n+1).$
		Since $\mathit{y}_{(n+1)1}\mathit{y}_{(n+1)2}\dots\mathit{y}_{(n+1)p}\notin\mathit{I}^\star_{n,p,q},$ so by Eq \ref{low} and Lemma \ref{ref6}(a)
\begin{align*}
\depth(S^\star_{n,p,q}/(\mathit{I}^\star_{n,p,q}:\mathit{y}_{(n+1)1}\mathit{y}_{(n+1)2}\dots\mathit{y}_{(n+1)p}))=&\depth(S^\star_{n-1,p,q}/\mathit{I}^\star_{n-1,p,q})\\
&+\depth(K[\mathit{A}_{n+1}\cup\mathit{B}_{n+1}]),   \end{align*}
and
\begin{align*}\sdepth(S^\star_{n,p,q}/(\mathit{I}^\star_{n,p,q}:\mathit{y}_{(n+1)1}\mathit{y}_{(n+1)2}\dots\mathit{y}_{(n+1)p}))&=\sdepth(S^\star_{n-1,p,q}/\mathit{I}^\star_{n-1,p,q})\\
&+\sdepth(K[\mathit{A}_{n+1}\cup\mathit{B}_{n+1}]).\end{align*}
By induction	$$\depth(S^\star_{n,p,q}/(\mathit{I}^\star_{n,p,q}:\mathit{y}_{(n+1)1}\mathit{y}_{(n+1)2}\dots\mathit{y}_{(n+1)p}))=(p+q)(n-1+1)+p+q=(p+q)(n+1),$$
and $$\sdepth(S^\star_{n,p,q}/(\mathit{I}^\star_{n,p,q}:\mathit{y}_{(n+1)1}\mathit{y}_{(n+1)2}\dots\mathit{y}_{(n+1)p}))=(p+q)(n-1+1)+p+q=(p+q)(n+1).$$
		Again by Corollary \ref{c1} and Proposition \ref{c2},  $\depth(S^\star_{n,p,q}/\mathit{I}^\star_{n,p,q}),\sdepth(S^\star_{n,p,q}/\mathit{I}^\star_{n,p,q})\leq (p+q)(n+1).$ Hence $\depth(S^\star_{n,p,q}/\mathit{I}^\star_{n,p,q})=\sdepth(S^\star_{n,p,q}/\mathit{I}^\star_{n,p,q})=(p+q)(n+1)$, as desired.\\
		\indent Now we prove the result for regularity by induction on $n$. If $n=1$, then by Eq \ref{77}, we have		$\reg(S^\star_{1,p,q}/(\mathit{I}^\star_{1,p,q}:\mathit{x}_{2}))=\reg(K[\{x_2\}\cup A_1\cup C_1\cup C_2])=0.$ Applying Lemma \ref{circulentt} and Lemma \ref{ref6}(b) on  Eq \ref{88}  $$\reg(S^\star_{1,p,q}/(\mathit{I}^\star_{1,p,q},\mathit{x}_{2}))= \reg(K[V(\mathcal{S}_{p,q})]/I(\mathcal{S}_{p,q}))+\sum^{p}_{k=1}\reg(K[V({\mathcal{S}_q})]/I\mathit{(\mathcal{S}_q)}).$$ 
   Using Corollary \ref{bstar}(b) and Theorem \ref{star}(b), $\reg\big(S^\star_{1,p,q}/(\mathit{I}^\star_{1,p,q},x_{2})\big)=p+\sum_{k=1}^{p}1=p+p=2p.$ The required result follows by Theorem \ref{reg}(c), that is $\reg\big(S^\star_{1,p,q}/\mathit{I}^\star_{1,p,q}\big)=2p.$ If $n=2$, then by using the similar arguments and case $n=1$, we get the desired result, that is $\reg\big(S^\star_{2,p,q}/\mathit{I}^\star_{2,p,q}\big)=3p.$ Now let $n\geq3$. By Eq \ref{55} and Lemma \ref{ref6}(b), $\reg(S^\star_{n,p,q}/(\mathit{I}^\star_{n,p,q}:\mathit{x}_{n+1}))=\reg(S^\star_{n-2,p,q}/\mathit{I}^\star_{n-2,p,q}).$ So by induction, $\reg(S^\star_{n,p,q}/(\mathit{I}^\star_{n,p,q}:\mathit{x}_{n+1}))=(n-2+1)p=(n-1)p.$
		Applying Lemma \ref{circulentt} and Lemma \ref{ref6}(b) on  Eq \ref{66}, we get	$$\reg\big(S^\star_{n,p,q}/(\mathit{I}^\star_{n,p,q},\mathit{x}_{n+1})\big
		)= \reg(S^\star_{n-1,p,q}/\mathit{I}^\star_{n-1,p,q})+\sum_{k=1}^{p}\reg(K[V({\mathcal{S}_q})]/\mathit{I(\mathcal{S}_q)}).$$  Again by induction and Theorem \ref{star}(b)
$$\reg(S^\star_{n,p,q}/(\mathit{I}^\star_{n,p,q},\mathit{x}_{n+1}))=(n-1+1)p+\sum_{k=1}^{p}1=np+p=(n+1)p.$$	Hence by Theorem \ref{reg}(c), 
		$\reg(S^\star_{n,p,q}/\mathit{I}^\star_{n,p,q})=(n+1)p.$ 

  The result for projective dimension follows by using Auslander–Buchsbaum formula, that is ${\pdim}(S^\star_{n,p,q}/\mathit{I}^\star_{n,p,q})+{\depth}(S^\star_{n,p,q}/\mathit{I}^\star_{n,p,q})={\depth}(S^\star_{n,p,q}),$ which implies that ${\pdim}(S^\star_{n,p,q}/\mathit{I}^\star_{n,p,q})={\depth}(S^\star_{n,p,q})-{\depth}(S^\star_{n,p,q}/\mathit{I}^\star_{n,p,q}).$ Hence
  $${\pdim}(S^\star_{n,p,q}/\mathit{I}^\star_{n,p,q})=(1+p+q+pq)(n+1)-(p+q)(n+1)=(1+pq)(n+1).$$

		
		
	\end{proof}

	\begin{Theorem}\label{Th2}
		{\em
			Let $n,p,q\geq 1$. Then
			\begin{itemize}
				\item[(a)] $\depth(\mathit{S}_{n,p,q}/\mathit{I}_{n,p,q})=\sdepth(\mathit{S}_{n,p,q}/\mathit{I}_{n,p,q})=(p+q)n+q.$
				\item[(b)]$\reg(\mathit{S}_{n,p,q}/\mathit{I}_{n,p,q})=np.$
				\item[(c)]$\pdim(\mathit{S}_{n,p,q}/\mathit{I}_{n,p,q})=(1+pq)n+1$.
				
			\end{itemize}
		}  
	\end{Theorem}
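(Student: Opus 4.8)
The plan is to prove all four invariants at once from a single short exact sequence obtained by breaking the graph at the terminal path vertex $x_{n+1}$, reducing everything to the values already established for the starred bristled graphs in Lemma \ref{Lem2}. Concretely, I would use
$$0\to S_{n,p,q}/(I_{n,p,q}:x_{n+1})\xrightarrow{\,\cdot x_{n+1}\,} S_{n,p,q}/I_{n,p,q}\to S_{n,p,q}/(I_{n,p,q},x_{n+1})\to 0,$$
and first establish, from the combinatorial description of $\brs_q(\mathcal{T}_n(p))$, the three isomorphisms
$$S_{n,p,q}/(I_{n,p,q}:x_{n+1})\cong S^\star_{n-2,p,q}/I^\star_{n-2,p,q}\otimes_K K[\{x_{n+1}\}\cup A_n\cup C_n],$$
$$S_{n,p,q}/(I_{n,p,q},x_{n+1})\cong S^\star_{n-1,p,q}/I^\star_{n-1,p,q}\otimes_K K[A_{n+1}],$$
$$S_{n,p,q}/(I_{n,p,q}:y_{n1}y_{n2}\cdots y_{np})\cong S^\star_{n-2,p,q}/I^\star_{n-2,p,q}\otimes_K K[A_n\cup A_{n+1}\cup B_n].$$
The graph-theoretic content is that deleting the closed neighborhood $N[x_{n+1}]=\{x_n,x_{n+1}\}\cup B_n\cup A_{n+1}$ leaves a copy of $\brs_q(\mathcal{T}^\star_{n-2}(p))$ together with the freed variables $x_{n+1}$, $A_n$ and $C_n$; whereas setting $x_{n+1}=0$ turns the apices $B_n$ over the last edge into pendant $q$-stars on $x_n$, producing exactly $\brs_q(\mathcal{T}^\star_{n-1}(p))$ disjoint from the now isolated bristles $A_{n+1}$.

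For part (a) I would apply the Depth Lemma (Lemma \ref{depth}) and compute the two outer depths via Lemma \ref{ref3}, Lemma \ref{ref6}(a) and Lemma \ref{Lem2}(a), obtaining $\depth(S_{n,p,q}/(I_{n,p,q},x_{n+1}))=(p+q)n+q$ and $\depth(S_{n,p,q}/(I_{n,p,q}:x_{n+1}))=(p+q)n+pq-p+1$; since $(p-1)(q-1)\ge0$ the latter is $\ge(p+q)n+q$, so the lower bound $\depth(S_{n,p,q}/I_{n,p,q})\ge(p+q)n+q$ follows. For the matching upper bound I would invoke Corollary \ref{c1} with the monomial $l=y_{n1}y_{n2}\cdots y_{np}\notin I_{n,p,q}$: the third isomorphism gives $\depth(S_{n,p,q}/(I_{n,p,q}:l))=(p+q)(n-1)+(2q+p)=(p+q)n+q$, capping the depth from above. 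The degenerate cases $n=1,2$, where $S^\star_{n-2,p,q}$ or $S^\star_{n-1,p,q}$ collapses, are read off from Remark \ref{R1} as in Lemma \ref{Lem2}. The Stanley depth statement then follows verbatim, replacing Lemma \ref{depth}, Lemma \ref{ref3} and Corollary \ref{c1} by Lemma \ref{sdepth}, Lemma \ref{ref5} and Proposition \ref{c2}; the two bounds coincide, which simultaneously proves Stanley's inequality for these modules.

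For part (b) the regularity is cleanest because the two branches separate strictly: by Lemma \ref{ref6}(b) and Lemma \ref{Lem2}(b) the colon term has $\reg=(n-1)p$, while the quotient term (tensoring with $K[A_{n+1}]$ leaves regularity unchanged) has $\reg=np$. Since $(n-1)p<np$ for $p\ge1$, Theorem \ref{reg}(c) forces $\reg(S_{n,p,q}/I_{n,p,q})=np$. Part (c) is then immediate from the Auslander--Buchsbaum formula (Theorem \ref{auss}): using $\depth(S_{n,p,q})=|V(\brs_q(\mathcal{T}_n(p)))|=(1+q)(1+n+np)$ we get $\pdim(S_{n,p,q}/I_{n,p,q})=(1+q)(1+n+np)-\big((p+q)n+q\big)=(1+pq)n+1$. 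I expect the only real obstacle to be purely combinatorial, namely verifying the three isomorphisms, and in particular checking that removing $N[x_{n+1}]$ (respectively setting $x_{n+1}=0$) really converts the last triangular block into the starred configuration $\brs_q(\mathcal{T}^\star_{n-2}(p))$ (respectively $\brs_q(\mathcal{T}^\star_{n-1}(p))$) and not some other induced subgraph; once these are pinned down, every invariant reduces to a one-line computation against Lemma \ref{Lem2}.
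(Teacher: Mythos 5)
Your proposal is correct and follows essentially the same route as the paper: the same short exact sequence at $x_{n+1}$, the same two isomorphisms onto $S^\star_{n-2,p,q}/I^\star_{n-2,p,q}$ and $S^\star_{n-1,p,q}/I^\star_{n-1,p,q}$, the Depth Lemma plus Lemma \ref{Lem2} for the lower bound, Theorem \ref{reg}(c) for regularity, and Auslander--Buchsbaum for projective dimension. The only (immaterial) difference is the monomial used for the upper bound on depth: you colon out by $y_{n1}y_{n2}\cdots y_{np}$, landing on $S^\star_{n-2,p,q}/I^\star_{n-2,p,q}\otimes_K K[A_n\cup A_{n+1}\cup B_n]$, whereas the paper uses $x_{(n+1)1}x_{(n+1)2}\cdots x_{(n+1)q}$, landing on $S^\star_{n-1,p,q}/I^\star_{n-1,p,q}\otimes_K K[A_{n+1}]$; both give exactly $(p+q)n+q$.
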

	\begin{proof}
		First we prove the result for depth. Consider the short exact sequence
		\begin{equation}\label{es3}
		0\longrightarrow \mathit{S}_{n,p,q}/(\mathit{I}_{n,p,q}:\mathit{x}_{n+1})\xrightarrow{\cdot \mathit{x}_{n+1}} \mathit{S}_{n,p,q}/\mathit{I}_{n,p,q}\longrightarrow \mathit{S}_{n,p,q}/(\mathit{I}_{n,p,q},\mathit{x}_{n+1})\longrightarrow 0,
		\end{equation}
		by applying Depth Lemma and Lemma \ref{sdepth} on Eq \ref{es3}, we have
		\begin{equation}\label{e}
\depth(\mathit{S}_{n,p,q}/\mathit{I}_{n,p,q})\geq \min\{\depth(\mathit{S}_{n,p,q}/(\mathit{I}_{n,p,q}:\mathit{x}_{n+1})), \depth(\mathit{S}_{n,p,q}/(\mathit{I}_{n,p,q},\mathit{x}_{n+1}))\}.
		\end{equation}
        and
        \begin{equation}\label{sse}
\sdepth(\mathit{S}_{n,p,q}/\mathit{I}_{n,p,q})\geq \min\{\sdepth(\mathit{S}_{n,p,q}/(\mathit{I}_{n,p,q}:\mathit{x}_{n+1})), \sdepth(\mathit{S}_{n,p,q}/(\mathit{I}_{n,p,q},\mathit{x}_{n+1}))\}.
		\end{equation}
		We have the following isomorphisms:
	\begin{equation}\label{99}
	\mathit{S}_{n,p,q}/(\mathit{I}_{n,p,q}:\mathit{x}_{n+1})\cong \mathit{S}^\star_{n-2,p,q}/\mathit{I}^\star_{n-2,p,q}\bigotimes_KK[\{\mathit{x}_{n+1}\}
\cup\mathit{A}_{n}\cup\mathit{C}_{n}],
\end{equation}
    \begin{equation}\label{1010}
    \mathit{S}_{n,p,q}/(\mathit{I}_{n,p,q},\mathit{x}_{n+1})\cong \mathit{S}^\star_{n-1,p,q}/\mathit{I}^\star_{n-1,p,q}\bigotimes_KK[\mathit{A}_{n+1}],
    \end{equation}
and
\begin{equation}\label{333}
\mathit{S}_{n,p,q}/(\mathit{I}_{n,p,q}:\mathit{x}_{(n+1)1}\mathit{x}_{(n+1)2}\dots\mathit{x}_{(n+1)q})\cong \mathit{S}^\star_{n-1,p,q}/\mathit{I}^\star_{n-1,p,q}\bigotimes_KK[\mathit{A}_{n+1}]. 
\end{equation}
    If $n=1$, then Eq \ref{99} becomes $S_{1,p,q}/(\mathit{I}_{1,p,q}:\mathit{x}_{2})\cong S^\star_{-1,p,q}/\mathit{I}^\star_{-1,p,q}\bigotimes_KK[\{x_2\}\cup A_1\cup C_1]$ and by Remark \ref{R1}, $S_{1,p,q}/(\mathit{I}_{1,p,q}:\mathit{x}_{2})\cong K\bigotimes_KK[\{x_2\}\cup A_1\cup C_1],$ which implies that 
  \begin{equation}\label{111}
S_{1,p,q}/(\mathit{I}_{1,p,q}:\mathit{x}_{2})\cong K[\{x_2\}\cup A_1\cup C_1].
  \end{equation}
  Thus $\depth(S_{1,p,q}/(\mathit{I}_{1,p,q}:\mathit{x}_{2}))=\sdepth(S_{1,p,q}/(\mathit{I}_{1,p,q}:\mathit{x}_{2}))=1+q+pq$ and by Eq \ref{1010}, $   \mathit{S}_{1,p,q}/(\mathit{I}_{1,p,q},\mathit{x}_{2})\cong \mathit{S}^\star_{0,p,q}/\mathit{I}^\star_{0,p,q}\bigotimes_KK[\mathit{A}_{2}].$ By Remark \ref{R1}  
  \begin{equation}\label{222}
  \mathit{S}_{1,p,q}/(\mathit{I}_{1,p,q},\mathit{x}_{2})\cong K[V(\mathcal{S}_{p,q})]/I(\mathcal{S}_{p,q})\bigotimes_KK[\mathit{A}_{2}].
  \end{equation}
  By Corollary \ref{bstar}(a) and Lemma \ref{ref6}(a), $\depth(\mathit{S}_{1,p,q}/(\mathit{I}_{1,p,q},\mathit{x}_{2}))=\sdepth(\mathit{S}_{1,p,q}/(\mathit{I}_{1,p,q},\mathit{x}_{2}))=p+q+q=2q+p.$ By Eq \ref{e} and Eq \ref{sse}  $\depth(\mathit{S}_{1,p,q}/\mathit{I}_{1,p,q})\geq 2q+p,$ and $\sdepth(\mathit{S}_{1,p,q}/\mathit{I}_{1,p,q})\geq 2q+p.$ Now since $\mathit{x}_{21}\mathit{x}_{22}\dots\mathit{x}_{2q}\notin\mathit{I}_{1,p,q},$ so by Eq \ref{333} and Remark \ref{R1}, $\mathit{S}_{1,p,q}/(\mathit{I}_{1,p,q}:\mathit{x}_{21}\mathit{x}_{22}\dots\mathit{x}_{2q})\cong K[V(\mathcal{S}_{p,q})]/I(\mathcal{S}_{p,q})\bigotimes_KK[\mathit{A}_{2}],$ thus by Corollary \ref{bstar}(a) and Lemma \ref{ref6}(a), $\depth(\mathit{S}_{1,p,q}/(\mathit{I}_{1,p,q}:\mathit{x}_{21}\mathit{x}_{22}\dots\mathit{x}_{2q}))=\sdepth(\mathit{S}_{1,p,q}/(\mathit{I}_{1,p,q}:\mathit{x}_{21}\mathit{x}_{22}\dots\mathit{x}_{2q}))=2q+p.$
		Since by Corollary \ref{c1} and Proposition \ref{c2}, $\depth(\mathit{S}_{1,p,q}/\mathit{I}_{1,p,q})\leq 2q+p$ and $\sdepth(\mathit{S}_{1,p,q}/\mathit{I}_{1,p,q})\leq 2q+p.$ Hence $\depth(\mathit{S}_{1,p,q}/\mathit{I}_{1,p,q})=\sdepth(\mathit{S}_{1,p,q}/\mathit{I}_{1,p,q})=2q+p.$ If $n=2$, then using the similar arguments and case $n=1$, we find the desired value, that is, $\depth(S_{2,p,q}/\mathit{I}_{2,p,q})=\sdepth(S_{2,p,q}/\mathit{I}_{2,p,q})=2p+3q.$
   Now let $n\geq3$. Using Eq \ref{99} and Lemma \ref{ref6}(a), we have
$$\depth(\mathit{S}_{n,p,q}/(\mathit{I}_{n,p,q}:\mathit{x}_{n+1}))=\depth(\mathit{S}^\star_{n-2,p,q}/\mathit{I}^\star_{n-2,p,q})+\depth(K[\{\mathit{x}_{n+1}\}
\cup\mathit{A}_{n}\cup\mathit{C}_{n}]),$$
and
$$\sdepth(\mathit{S}_{n,p,q}/(\mathit{I}_{n,p,q}:\mathit{x}_{n+1}))=\sdepth(\mathit{S}^\star_{n-2,p,q}/\mathit{I}^\star_{n-2,p,q})+\sdepth(K[\{\mathit{x}_{n+1}\}
\cup\mathit{A}_{n}\cup\mathit{C}_{n}]).$$ By Lemma \ref{Lem2}(a), $\depth(\mathit{S}_{n,p,q}/(\mathit{I}_{n,p,q}:\mathit{x}_{n+1}))=\sdepth(\mathit{S}_{n,p,q}/(\mathit{I}_{n,p,q}:\mathit{x}_{n+1}))=(p+q)(n-2+1)+pq+q+1=(p+q)n+pq-p+1,$ and by Eq \ref{1010} and Lemma \ref{ref6}(a) $$\depth(\mathit{S}_{n,p,q}/(\mathit{I}_{n,p,q},\mathit{x}_{n+1}))=\depth(\mathit{S}^\star_{n-1,p,q}/\mathit{I}^\star_{n-1,p,q})+\depth(K[\mathit{A}_{n+1}]),$$
and
$$\sdepth(\mathit{S}_{n,p,q}/(\mathit{I}_{n,p,q},\mathit{x}_{n+1}))=\sdepth(\mathit{S}^\star_{n-1,p,q}/\mathit{I}^\star_{n-1,p,q})+\sdepth(K[\mathit{A}_{n+1}]).$$
Again by Lemma \ref{Lem2}(a), we get,
$\depth(\mathit{S}_{n,p,q}/(\mathit{I}_{n,p,q},\mathit{x}_{n+1}))=\sdepth(\mathit{S}_{n,p,q}/(\mathit{I}_{n,p,q},\mathit{x}_{n+1}))=(p+q)(n-1+1)+q=(p+q)n+q.$
    	Thus by Eq \ref{e} and Eq \ref{sse}, $\depth(\mathit{S}_{n,p,q}/\mathit{I}_{n,p,q})\geq (p+q)n+q,$ and $\sdepth(\mathit{S}_{n,p,q}/\mathit{I}_{n,p,q})\geq (p+q)n+q.$ Now since $\mathit{x}_{(n+1)1}\mathit{x}_{(n+1)2}\dots\mathit{x}_{(n+1)q}\notin\mathit{I}_{n,p,q},$ so by Eq \ref{333} and Lemma \ref{ref6}(a) $$\depth(\mathit{S}_{n,p,q}/(\mathit{I}_{n,p,q}:\mathit{x}_{(n+1)1}\mathit{x}_{(n+1)2}\dots\mathit{x}_{(n+1)q}))=\depth(\mathit{S}^\star_{n-1,p,q}/\mathit{I}^\star_{n-1,p,q})+\depth(K[\mathit{A}_{n+1}]),$$
        and       $$\sdepth(\mathit{S}_{n,p,q}/(\mathit{I}_{n,p,q}:\mathit{x}_{(n+1)1}\mathit{x}_{(n+1)2}\dots\mathit{x}_{(n+1)q}))=\sdepth(\mathit{S}^\star_{n-1,p,q}/\mathit{I}^\star_{n-1,p,q})+\sdepth(K[\mathit{A}_{n+1}]).$$
Using Lemma \ref{Lem2}(a) $$\depth(\mathit{S}_{n,p,q}/(\mathit{I}_{n,p,q}:\mathit{x}_{(n+1)1}\mathit{x}_{(n+1)2}\dots\mathit{x}_{(n+1)q}))= (p+q)(n-1+1)+q=(p+q)n+q,$$
and
$$\sdepth(\mathit{S}_{n,p,q}/(\mathit{I}_{n,p,q}:\mathit{x}_{(n+1)1}\mathit{x}_{(n+1)2}\dots\mathit{x}_{(n+1)q}))= (p+q)(n-1+1)+q=(p+q)n+q.$$
By Corollary \ref{c1} and Proposition \ref{c2}, $\depth(\mathit{S}_{n,p,q}/\mathit{I}_{n,p,q}),\sdepth(\mathit{S}_{n,p,q}/\mathit{I}_{n,p,q})\leq (p+q)n+q.$ Hence $\depth(\mathit{S}_{n,p,q}/\mathit{I}_{n,p,q})=\sdepth(\mathit{S}_{n,p,q}/\mathit{I}_{n,p,q})=(p+q)n+q.$

 If $n=1$, then by Eq \ref{111}, we have $\reg(S_{1,p,q}/(\mathit{I}_{1,p,q}:\mathit{x}_{2}))=\reg(K[\{x_2\}\cup A_1\cup C_1])=0.$ By Eq \ref{222} and Lemma \ref{ref6}(b) $$ \reg(\mathit{S}_{1,p,q}/(\mathit{I}_{1,p,q},\mathit{x}_{2}))=\reg(K[V(\mathcal{S}_{p,q})]/I(\mathcal{S}_{p,q})).$$ Using Corollary \ref{bstar}(b),  $\reg(\mathit{S}_{1,p,q}/(\mathit{I}_{1,p,q},\mathit{x}_{2}))=p.$ So by Theorem \ref{reg}(c), $\reg(\mathit{S}_{1,p,q}/\mathit{I}_{1,p,q})=p.$ If $n=2$, then by using the similar arguments and case $n=1$, the result can be easily verified that is $\reg(\mathit{S}_{2,p,q}/\mathit{I}_{2,p,q})=2p.$ Now let $n\geq3$. By Eq \ref{99} and Lemma \ref{ref6}(b), $\reg(\mathit{S}_{n,p,q}/(\mathit{I}_{n,p,q}:\mathit{x}_{n+1}))=\reg(\mathit{S}^\star_{n-2,p,q}/\mathit{I}^\star_{n-2,p,q}).$
		By Lemma \ref{Lem2}(b), $\reg(\mathit{S}_{n,p,q}/(\mathit{I}_{n,p,q}:\mathit{x}_{n+1}))=(n-2+1)p=(n-1)p$ and by Eq \ref{1010} and Lemma \ref{ref6}(b), $\reg(\mathit{S}_{n,p,q}/(\mathit{I}_{n,p,q},\mathit{x}_{n+1}))=\reg(\mathit{S}^\star_{n-1,p,q}/\mathit{I}^\star_{n-1,p,q}).$ Again by Lemma \ref{Lem2}(b), we have
$\reg(\mathit{S}_{n,p,q}/(\mathit{I}_{n,p,q},\mathit{x}_{n+1}))=(n-1+1)p=np.$
		Thus by Theorem \ref{reg}(c),
		$\reg(\mathit{S}_{n,p,q}/\mathit{I}_{n,p,q})=np.$ 

Now we prove the result for projective dimension by using Auslander–Buchsbaum formula $${\pdim}(S_{n,p,q}/\mathit{I}_{n,p,q})={\depth}(S_{n,p,q})-{\depth}(S_{n,p,q}/\mathit{I}_{n,p,q}).$$ Hence  ${\pdim}(S_{n,p,q}/\mathit{I}_{n,p,q})=(1+q)(1+n+np)-\bigl((p+q)n+q\bigr)=(1+pq)n+1.$
		\end{proof} 
	    \section{Depth, Stanley Depth, regularity and projective dimension of cyclic modules associated to $\mathcal{O}_n$, $\mathcal{O}_n(p)$, $\brs_q(\mathcal{O}_n)$ and $\brs_q(\mathcal{O}_n(p))$}\label{sec4}
		In this section we compute the exact values of regularity for the cyclic module $\mathit{Q}_{n,p}/\mathit{J}_{n,p}$. For this purpose we first compute the exact value of regularity for the cyclic module $\mathit{Q}^\star_{n,p}/\mathit{J}^\star_{n,p}.$ Shahid et al. in \cite{SMM} computed the exact values of depth and Stanley depth for these modules. The values of projective dimension for these modules can be found by using Theorem \ref{auss}. Furthermore, we compute the exact values of depth, Stanley depth, regularity and projective dimension for the cyclic module $\mathit{Q}_{n,p,q}/\mathit{J}_{n,p,q}$. For this purpose we first compute the exact values of all mentioned invariants for the cyclic module $\mathit{Q}^\star_{n,p,q}/\mathit{J}^\star_{n,p,q}$.
	\begin{Remark}\label{R2}
		{\em
		While proving our results by induction on $n$, sometimes we may have the description $Q^\star_{-2,p}/\mathit{J}^\star_{-2,p}$, $Q^\star_{-1,p}/\mathit{J}^\star_{-1,p}$, $Q^\star_{0,p}/\mathit{J}^\star_{0,p}$, $Q^\star_{-1,p,q}/\mathit{J}^\star_{-1,p,q}$ or $Q^\star_{0,p,q}/\mathit{J}^\star_{0,p,q}$, in that case we define
			\begin{itemize}
				\item $ Q^\star_{-2,p}/\mathit{J}^\star_{-2,p}\cong K$,
					\item
				$ Q^\star_{-1,p}/\mathit{J}^\star_{-1,p}\cong K[\mathit{B}_{1}]\cong K[\mathit{B}_{3}]\cong K[\mathit{B}_{4}]$,
				\item $ Q^\star_{0,p}/\mathit{J}^\star_{0,p}\cong K[V(\mathcal{S}_{2p})]/I(\mathcal{S}_{2p})$,
               \item $Q^\star_{-1,p,q}/\mathit{J}^\star_{-1,p,q}\cong\bigotimes_{k=1}^{p}{}_KK[V({\mathcal{S}_{q}})]/I(\mathcal{S}_{q}),$
				\item
				$ Q^\star_{0,p,q}/\mathit{J}^\star_{0,p,q}\cong K[V(\mathcal{S}_{2p,q})]/I(\mathcal{S}_{2p,q})$.
			\end{itemize} 
		 } 
	\end{Remark}
	\begin{Lemma}\label{Lem3}
 {\em
		Let $n,p\geq1$. Then $\reg(Q^\star_{n,p}/\mathit{J}^\star_{n,p})=\left\lceil{\frac{n+1}{2}}\right\rceil$.
  }
	\end{Lemma}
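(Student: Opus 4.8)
The plan is to run an induction on $n$ and cut on the internal vertex $x_n$ via the regularity tool Theorem \ref{reg}, exactly mirroring the proof of Lemma \ref{Lem1}. The engine of the argument will be the two ``ouroboros'' analogues of Eq \ref{11} and Eq \ref{22}:
\[
Q^\star_{n,p}/(J^\star_{n,p}:x_n)\cong Q^\star_{n-3,p}/J^\star_{n-3,p}\bigotimes_K K[\{x_n\}\cup B_{n+1}]
\]
and
\[
Q^\star_{n,p}/(J^\star_{n,p},x_n)\cong Q^\star_{n-2,p}/J^\star_{n-2,p}\bigotimes_K K[V(\mathcal{S}_{2p})]/I(\mathcal{S}_{2p}).
\]
These reduce the computation of $\reg$ of the colon and the sum to smaller members of the same family, combined with the regularity $0$ of a polynomial ring (Lemma \ref{ref6}(b)) and the regularity $1$ of a star module (Theorem \ref{star}(b)).

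First I would justify these isomorphisms combinatorially. In $\mathcal{T}^{\star\star}_n(p)$ the closed neighbourhood of $x_n$ is $N[x_n]=\{x_{n-1},x_n,x_{n+1}\}\cup B_{n-1}\cup B_n$; deleting it kills the two triangles meeting $x_n$, isolates the bristles $B_{n+1}$ carried by $x_{n+1}$ (which then become free variables together with $x_n$), turns the apexes $B_{n-2}$ of the broken triangle into pendants of $x_{n-2}$, and---this is the point that distinguishes the double star from the single star---leaves the extra bristle bundle at $x_1$ untouched, so the residual graph is again a $\mathcal{T}^{\star\star}$-graph on the shortened path, giving the first isomorphism. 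Setting $x_n=0$ instead splits off a component at $x_{n+1}$: the bristles $B_n$ collapse onto $x_{n+1}$ and join $B_{n+1}$ to form a $2p$-star $\mathcal{S}_{2p}$, while the remaining piece is once more a $\mathcal{T}^{\star\star}$-graph, yielding the second isomorphism. For the degenerate small indices that appear ($Q^\star_{0,p}$, $Q^\star_{-1,p}$, $Q^\star_{-2,p}$) these descriptions must be read through the conventions of Remark \ref{R2}, which were set up precisely so that both isomorphisms remain valid down to $n=1$.

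Finally I would close the induction. The base cases $n=1,2$ are checked directly: using the isomorphisms together with Remark \ref{R2} one finds $\reg(Q^\star_{n,p}/(J^\star_{n,p}:x_n))=0$ and, via Lemma \ref{circulentt} and Theorem \ref{star}(b), $\reg(Q^\star_{n,p}/(J^\star_{n,p},x_n))=\lceil (n+1)/2\rceil$, so Theorem \ref{reg}(c) gives the claim. For $n\geq 3$, Lemma \ref{ref6}(b) applied to the first isomorphism gives $\reg(Q^\star_{n,p}/(J^\star_{n,p}:x_n))=\reg(Q^\star_{n-3,p}/J^\star_{n-3,p})=\lceil (n-2)/2\rceil$ by induction, while Lemma \ref{circulentt} and Theorem \ref{star}(b) applied to the second give $\reg(Q^\star_{n,p}/(J^\star_{n,p},x_n))=\reg(Q^\star_{n-2,p}/J^\star_{n-2,p})+1=\lceil (n+1)/2\rceil$. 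Since $\lceil (n-2)/2\rceil<\lceil (n+1)/2\rceil$, Theorem \ref{reg}(c) yields $\reg(Q^\star_{n,p}/J^\star_{n,p})=\lceil (n+1)/2\rceil$. I expect the main obstacle to be the second step: one must verify carefully that deleting or inverting $x_n$ reproduces a graph of the \emph{double}-starred type rather than the single-starred type, and that the low-$n$ degenerations agree with Remark \ref{R2}. Once those isomorphisms are secured, the numerical induction is routine and essentially identical to that of Lemma \ref{Lem1}.
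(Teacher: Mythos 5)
Your proposal is correct and follows essentially the same route as the paper: the same two isomorphisms (the paper's Eq.~\ref{444} and Eq.~\ref{555}), the same base cases $n=1,2$ handled via Remark~\ref{R2}, and the same induction step combining Lemma~\ref{ref6}(b), Lemma~\ref{circulentt}, Theorem~\ref{star}(b) and Theorem~\ref{reg}(c). The only difference is that you spell out the combinatorial justification of the two isomorphisms, which the paper states without proof.
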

	\begin{proof}
We will prove the result by induction on $n$. We have the following isomorphisms:
  \begin{equation}\label{444}
Q^\star_{n,p}/(\mathit{J}^\star_{n,p}:\mathit{x}_{n})\cong Q^\star_{n-3,p}/\mathit{J}^\star_{n-3,p}\bigotimes_KK[\{\mathit{x}_{n}\}\cup\mathit{B}_{n+1}],
\end{equation} and
\begin{equation}\label{555}
Q^\star_{n,p}/(\mathit{J}^\star_{n,p},\mathit{x}_{n})\cong Q^\star_{n-2,p}/\mathit{J}^\star_{n-2,p}\bigotimes_KK[V(\mathcal{S}_{2p})]/I(\mathcal{S}_{2p}).
\end{equation} If $n=1$, then Eq \ref{444} becomes $Q^\star_{1,p}/(\mathit{J}^\star_{1,p}:\mathit{x}_{1})\cong Q^\star_{-2,p}/\mathit{J}^\star_{-2,p}\bigotimes_KK[\{\mathit{x}_{1}\}\cup\mathit{B}_{2}]$ and by Remark \ref{R2}, $Q^\star_{1,p}/(\mathit{J}^\star_{1,p}:\mathit{x}_{1})\cong K\bigotimes_KK[\{\mathit{x}_{1}\}\cup\mathit{B}_{2}]\cong K[\{\mathit{x}_{1}\}\cup\mathit{B}_{2}]$, which implies $\reg(Q^\star_{1,p}/(\mathit{J}^\star_{1,p}:\mathit{x}_{1}))=0.$ Now by Eq \ref{555}, $Q^\star_{1,p}/(\mathit{J}^\star_{1,p},\mathit{x}_{1})\cong Q^\star_{-1,p}/\mathit{J}^\star_{-1,p}\bigotimes_KK[V(\mathcal{S}_{2p})]/I(\mathcal{S}_{2p})$ and again by Remark \ref{R2}, we get $Q^\star_{1,p}/(\mathit{J}^\star_{1,p},\mathit{x}_{1})\cong K[\mathit{B}_{3}]\bigotimes_KK[V(\mathcal{S}_{2p})]/I(\mathcal{S}_{2p}).$ Using Lemma \ref{ref6}(b) $$\reg(Q^\star_{1,p}/(\mathit{J}^\star_{1,p},\mathit{x}_{1}))=\reg(K[V(\mathcal{S}_{2p})]/I(\mathcal{S}_{2p})).$$ By Theorem \ref{star}(b), $\reg(Q^\star_{1,p}/(\mathit{J}^\star_{1,p},\mathit{x}_{1}))=1.$ The result follows by Theorem \ref{reg}(c), that is
\begin{equation}\label{888}
\reg(Q^\star_{1,p}/\mathit{J}^\star_{1,p})=1=\left\lceil{\frac{1+1}{2}}\right\rceil.
\end{equation}
If $n=2$, then by Eq \ref{444}, $Q^\star_{2,p}/(\mathit{J}^\star_{2,p}:\mathit{x}_{2})\cong Q^\star_{-1,p}/\mathit{J}^\star_{-1,p}\bigotimes_KK[\{\mathit{x}_{2}\}\cup\mathit{B}_{3}]$ and by Remark \ref{R2}, $Q^\star_{2,p}/(\mathit{J}^\star_{2,p}:\mathit{x}_{2})\cong K[\mathit{B}_{4}]\bigotimes_KK[\{\mathit{x}_{2}\}\cup\mathit{B}_{3}]\cong K[\{\mathit{x}_{2}\}\cup\mathit{B}_{3}\cup\mathit{B}_{4}],$ we have, $\reg(Q^\star_{2,p}/(\mathit{J}^\star_{2,p}:\mathit{x}_{2}))=0.$ By Eq \ref{555}, $Q^\star_{2,p}/(\mathit{J}^\star_{2,p},\mathit{x}_{2})\cong Q^\star_{0,p}/\mathit{J}^\star_{0,p}\bigotimes_KK[V(\mathcal{S}_{2p})]/I(\mathcal{S}_{2p}),$ which by Remark \ref{R2} gives$$Q^\star_{2,p}/(\mathit{J}^\star_{2,p},\mathit{x}_{2})\cong K[V(\mathcal{S}_{2p})]/I(\mathcal{S}_{2p})\bigotimes_KK[V(\mathcal{S}_{2p})]/I(\mathcal{S}_{2p}).$$ By Lemma \ref{circulentt}, $\reg(Q^\star_{2,p}/(\mathit{J}^\star_{2,p},\mathit{x}_{2}))=\reg(K[V(\mathcal{S}_{2p})]/I(\mathcal{S}_{2p}))+\reg(K[V(\mathcal{S}_{2p})]/I(\mathcal{S}_{2p})).$ By Theorem \ref{star}(b), $\reg(Q^\star_{2,p}/(\mathit{J}^\star_{2,p},\mathit{x}_{2}))=1+1=2$. By Theorem \ref{reg}(c), we have
\begin{equation}\label{999}
\reg(Q^\star_{2,p}/\mathit{J}^\star_{2,p})=2=\left\lceil{\frac{2+1}{2}}\right\rceil.
\end{equation}
Now let $n\geq3$, the result follows by applying induction on $n$. By Eq \ref{444} and Lemma \ref{ref6}(b), $\reg(Q^\star_{n,p}/(\mathit{J}^\star_{n,p}:\mathit{x}_{n}))=\reg(Q^\star_{n-3,p}/\mathit{J}^\star_{n-3,p}).$
		By induction, $\reg(Q^\star_{n,p}/(\mathit{J}^\star_{n,p}:\mathit{x}_{n}))=\left\lceil{\frac{n-3+1}{2}}\right\rceil=\left\lceil{\frac{n-2}{2}}\right\rceil.$
		Using Lemma \ref{circulentt} on Eq \ref{555} 
$$\reg\big(Q^\star_{n,p}/(\mathit{J}^\star_{n,p},\mathit{x}_{n})\big)= \reg(Q^\star_{n-2,p}/\mathit{J}^\star_{n-2,p})+\reg(K[V({\mathcal{S}_{2p}})]/I(\mathcal{S}_{2p}).$$
		Again by induction and Theorem \ref{star}(b),
$\reg(Q^\star_{n,p}/(\mathit{J}^\star_{n,p},\mathit{x}_{n}))=\left\lceil{\frac{n-2+1}{2}}\right\rceil+1=\left\lceil{\frac{n+1}{2}}\right\rceil$. Hence, by Theorem \ref{reg}(c),
	$	\reg(Q^\star_{n,p}/\mathit{J}^\star_{n,p})=\left\lceil{\frac{n+1}{2}}\right\rceil.$

		
	\end{proof}
	\begin{Theorem}\label{Th3}
 {\em
		Let $n,p\geq3$. Then $\reg(Q_{n,p}/\mathit{J}_{n,p})=\left\lceil{\frac{n-1}{2}}\right\rceil$.
  }
	\end{Theorem}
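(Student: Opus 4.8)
The plan is to deduce the theorem directly from Lemma~\ref{Lem3}, without a fresh induction, by applying the regularity lemma (Theorem~\ref{reg}) to the vertex $x_1$ at which the cycle of $\mathcal{O}_n(p)$ is closed. The guiding idea is that deleting $x_1$ cuts the ouroboros open into a multi triangular snake carrying $p$ pendant vertices at each end, and such graphs are precisely the $\mathcal{T}^{\star\star}$ graphs whose regularity was determined in Lemma~\ref{Lem3}. Since $x_1\notin J_{n,p}$, Theorem~\ref{reg} applies with $y=x_1$, so everything reduces to identifying the two ideals $(J_{n,p},x_1)$ and $(J_{n,p}:x_1)$.

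First I would settle the sum. In $\mathcal{O}_n(p)$ the neighbours of $x_1$ are $x_2$, $x_n$, and the apex vertices $y_{1k},y_{nk}$ ($1\le k\le p$). Passing to $(J_{n,p},x_1)$ amounts to deleting $x_1$: the cycle $x_1x_2\cdots x_nx_1$ becomes the path $x_2x_3\cdots x_n$, while $y_{1k}$ survives as a pendant at $x_2$ and $y_{nk}$ as a pendant at $x_n$. This is exactly $\mathcal{T}^{\star\star}_{n-2}(p)$, so
$$Q_{n,p}/(J_{n,p},x_1)\cong Q^\star_{n-2,p}/J^\star_{n-2,p}.$$
For the colon, $(J_{n,p}:x_1)$ kills the neighbours $x_2,x_n,y_{1k},y_{nk}$ and frees $x_1$; after these deletions the apexes $y_{2k}$ and $y_{(n-1)k}$ each lose one endpoint and become pendants at $x_3$ and $x_{n-1}$, leaving the snake on the path $x_3\cdots x_{n-1}$ with $p$ pendants at both ends, i.e.
$$Q_{n,p}/(J_{n,p}:x_1)\cong Q^\star_{n-4,p}/J^\star_{n-4,p}\bigotimes_K K[x_1].$$

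With these isomorphisms, Lemma~\ref{Lem3} together with Lemma~\ref{ref6}(b) gives
$$\reg\big(Q_{n,p}/(J_{n,p},x_1)\big)=\Big\lceil\tfrac{n-1}{2}\Big\rceil,\qquad \reg\big(Q_{n,p}/(J_{n,p}:x_1)\big)=\Big\lceil\tfrac{n-3}{2}\Big\rceil.$$
Since $\lceil (n-3)/2\rceil=\lceil (n-1)/2\rceil-1<\lceil (n-1)/2\rceil$ for every integer $n$, case (c) of Theorem~\ref{reg} yields $\reg(Q_{n,p}/J_{n,p})=\lceil (n-1)/2\rceil$, as claimed.

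The only place that needs extra care is the small values $n=3,4,5$, where the reduced index $n-4\in\{-1,0,1\}$ must be interpreted through Remark~\ref{R2} (for $n-4=-1,0$) and through Lemma~\ref{Lem3} directly (for $n-4=1$); in each case one checks that the colon regularity is $0,1,1$ respectively, still one below the corresponding sum regularity. The main obstacle I anticipate is purely bookkeeping: verifying the two graph isomorphisms, in particular tracking which apex vertices degenerate into pendants once $x_1$ is removed, so that the residual graphs are matched with the correct members of the $\mathcal{T}^{\star\star}$ family and the index shifts $n-2$ and $n-4$ come out exactly.
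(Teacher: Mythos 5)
Your proposal is correct and follows essentially the same route as the paper: the paper works with the symmetric vertex $x_n$ instead of $x_1$ and obtains exactly your two isomorphisms $Q_{n,p}/(J_{n,p},x_n)\cong Q^\star_{n-2,p}/J^\star_{n-2,p}$ and $Q_{n,p}/(J_{n,p}:x_n)\cong Q^\star_{n-4,p}/J^\star_{n-4,p}\bigotimes_K K[x_n]$, then concludes via Lemma \ref{Lem3}, Remark \ref{R2} for $n=3,4$, and Theorem \ref{reg}(c). Your verification of the small cases (colon regularity $0,1$ for $n=3,4$, each strictly below the sum regularity) matches the paper's explicit treatment.
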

	\begin{proof}
		We have the following isomorphisms:
		\begin{equation}\label{666}
Q_{n,p}/(\mathit{J}_{n,p}:\mathit{x}_{n})\cong Q^\star_{n-4,p}/\mathit{J}^\star_{n-4,p}\bigotimes_KK[\mathit{x}_{n}],
\end{equation}and
\begin{equation}\label{777}
Q_{n,p}/(\mathit{J}_{n,p},\mathit{x}_{n})\cong Q^\star_{n-2,p}/\mathit{J}^\star_{n-2,p}.
\end{equation} 
If $n=3$, then Eq \ref{666} becomes, $Q_{3,p}/(\mathit{J}_{3,p}:\mathit{x}_{3})\cong Q^\star_{-1,p}/\mathit{J}^\star_{-1,p}\bigotimes_KK[\mathit{x}_{3}].$ By Remark \ref{R2}, $Q_{3,p}/(\mathit{J}_{3,p}:\mathit{x}_{3})\cong K[\mathit{B}_{1}]\bigotimes_KK[\mathit{x}_{3}]\cong K[\{\mathit{x}_{3}\}\cup\mathit{B}_{1}],$ thus $\reg(Q_{3,p}/(\mathit{J}_{3,p}:\mathit{x}_{3}))=0$ and Eq \ref{777} becomes, $Q_{3,p}/(\mathit{J}_{3,p},\mathit{x}_{3})\cong Q^\star_{1,p}/\mathit{J}^\star_{1,p}.$ Using Eq \ref{888}, we have, $\reg(Q_{3,p}/(\mathit{J}_{3,p},\mathit{x}_{3}))=1.$ Now by Theorem \ref{reg}(c), we get $\reg(Q_{3,p}/\mathit{J}_{3,p}))=1=\left\lceil{\frac{3-1}{2}}\right\rceil.$\\ If $n=4$, then Eq \ref{666} implies that $Q_{4,p}/(\mathit{J}_{4,p}:\mathit{x}_{4})\cong Q^\star_{0,p}/\mathit{J}^\star_{0,p}\bigotimes_KK[\mathit{x}_{4}].$ Using Remark \ref{R2}, $Q_{4,p}/(\mathit{J}_{4,p}:\mathit{x}_{4})\cong K[V(\mathcal{S}_{2p})]/I(\mathcal{S}_{2p})\bigotimes_KK[\mathit{x}_{4}].$ By Lemma \ref{ref6}(b), $\reg(Q_{4,p}/(\mathit{J}_{4,p}:\mathit{x}_{4}))=\reg(K[V(\mathcal{S}_{2p})]/I(\mathcal{S}_{2p})).$ By Theorem \ref{star}(b), $\reg(Q_{4,p}/(\mathit{J}_{4,p}:\mathit{x}_{4}))=1.$ By Eq \ref{777}, $Q_{4,p}/(\mathit{J}_{4,p},\mathit{x}_{4})\cong Q^\star_{2,p}/\mathit{J}^\star_{2,p}.$ By Lemma  \ref{Lem3}, $\reg(Q_{4,p}/(\mathit{J}_{4,p},\mathit{x}_{4}))= 2.$ By Theorem \ref{reg}(c), $\reg(Q_{4,p}/\mathit{J}_{4,p})= 2=\left\lceil{\frac{4-1}{2}}\right\rceil.$\\ Now let $n\geq5$. It follow from Eq \ref{666} and Lemma \ref{ref6}(b) that $$\reg(Q_{n,p}/(\mathit{J}_{n,p}:\mathit{x}_{n}))=\reg(Q^\star_{n-4,p}/\mathit{J}^\star_{n-4,p}).$$ From Lemma \ref{Lem3} we have
$\reg(Q_{n,p}/(\mathit{J}_{n,p}:\mathit{x}_{n}))=\left\lceil{\frac{n-4+1}{2}}\right\rceil=\left\lceil{\frac{n-3}{2}}\right\rceil$ and using Eq \ref{777} $$\reg(Q_{n,p}/(\mathit{J}_{n,p},\mathit{x}_{n})))=\reg(Q^\star_{n-2,p}/\mathit{J}^\star_{n-2,p}).$$
		Again by Lemma \ref{Lem3}
		$\reg(Q_{n,p}/(\mathit{J}_{n,p},\mathit{x}_{n}))=\left\lceil{\frac{n-2+1}{2}}\right\rceil=\left\lceil{\frac{n-1}{2}}\right\rceil.$
		Hence by Theorem \ref{reg}(c) we have
$\reg(Q_{n,p}/\mathit{J}_{n,p})=\left\lceil{\frac{n-1}{2}}\right\rceil.$
	\end{proof}
	\begin{Lemma}\label{Lem4}
 {\em
		Let $n,p,q\geq 1$. Then
  }
		\begin{itemize}
			\item[(a)] $\depth(\mathit{Q}^\star_{n,p,q}/\mathit{J}^\star_{n,p,q})=\sdepth(\mathit{Q}^\star_{n,p,q}/\mathit{J}^\star_{n,p,q})=(p+q)(n+1)+p,$
			\item[(b)]$\reg(Q^\star_{n,p,q}/\mathit{J}^\star_{n,p,q})=(n+2)p,$
			\item[(c)]$\pdim(Q^\star_{n,p,q}/\mathit{J}^\star_{n,p,q})=(1+pq)(n+1)+pq$.
		\end{itemize}  
	\end{Lemma}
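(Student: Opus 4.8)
The plan is to mirror the proof of Lemma \ref{Lem2} almost line for line, inducting on $n$ and peeling off the end vertex $x_{n+1}$; the extra $p$ pendant $q$-stars attached at $x_1$ (the feature distinguishing $\mathcal{T}^{\star\star}_n(p)$ from $\mathcal{T}^{\star}_n(p)$) sit far from $x_{n+1}$ and are simply carried along inside the lower-index modules. First I would record the three structural isomorphisms driving the recursion, read off the graph exactly as in \eqref{55}, \eqref{66} and \eqref{low}:
\begin{equation*}
Q^\star_{n,p,q}/(\mathit{J}^\star_{n,p,q}:\mathit{x}_{n+1})\cong Q^\star_{n-2,p,q}/\mathit{J}^\star_{n-2,p,q}\bigotimes_KK[\{\mathit{x}_{n+1}\}\cup\mathit{A}_n\cup\mathit{C}_n\cup\mathit{C}_{n+1}],
\end{equation*}
\begin{equation*}
Q^\star_{n,p,q}/(\mathit{J}^\star_{n,p,q},\mathit{x}_{n+1})\cong Q^\star_{n-1,p,q}/\mathit{J}^\star_{n-1,p,q}\bigotimes_KK[\mathit{A}_{n+1}]\bigotimes_K{\bigotimes_K}_{k=1}^{p}K[V(\mathcal{S}_q)]/I(\mathcal{S}_q),
\end{equation*}
\begin{equation*}
Q^\star_{n,p,q}/(\mathit{J}^\star_{n,p,q}:\mathit{y}_{(n+1)1}\cdots \mathit{y}_{(n+1)p})\cong Q^\star_{n-1,p,q}/\mathit{J}^\star_{n-1,p,q}\bigotimes_KK[\mathit{A}_{n+1}\cup\mathit{B}_{n+1}].
\end{equation*}
The only change from the $\mathcal{T}^\star$ setting is the boundary data in Remark \ref{R2}: setting $x_{n+1}=0$ turns each star apex $y_{(n+1)k}$ into an isolated $q$-star, while forcing $x_n$ together with the apexes $y_{nj}$ promotes $x_n$ to a new $\star$-end, which accounts for the shifts to $n-1$ and $n-2$; in the base cases the vertex $x_1$ carries $2p$ triangle apexes, whence $Q^\star_{0,p,q}/\mathit{J}^\star_{0,p,q}\cong K[V(\mathcal{S}_{2p,q})]/I(\mathcal{S}_{2p,q})$.

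For part (a) I would feed the short exact sequence
\begin{equation*}
0\longrightarrow Q^\star_{n,p,q}/(\mathit{J}^\star_{n,p,q}:\mathit{x}_{n+1})\xrightarrow{\cdot \mathit{x}_{n+1}} Q^\star_{n,p,q}/\mathit{J}^\star_{n,p,q}\longrightarrow Q^\star_{n,p,q}/(\mathit{J}^\star_{n,p,q},\mathit{x}_{n+1})\longrightarrow 0
\end{equation*}
into the Depth Lemma \ref{depth}. Using Lemma \ref{ref3}, Lemma \ref{ref6}(a), Theorem \ref{star}(a) and Corollary \ref{bstar}(a), the first isomorphism yields depth $(p+q)n+2pq+1$ for the colon module and the second yields $(p+q)(n+1)+p$ for the quotient module; since the difference is $(q-1)(2p-1)\ge 0$, the quotient term is the smaller, so $\depth(Q^\star_{n,p,q}/\mathit{J}^\star_{n,p,q})\ge (p+q)(n+1)+p$. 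The third isomorphism, together with $y_{(n+1)1}\cdots y_{(n+1)p}\notin \mathit{J}^\star_{n,p,q}$ and Corollary \ref{c1}, gives the reverse inequality $\depth(Q^\star_{n,p,q}/\mathit{J}^\star_{n,p,q})\le (p+q)(n+1)+p$, closing the induction; the cases $n=1,2$ are checked directly from Remark \ref{R2}. The Stanley depth assertion is obtained verbatim once Lemma \ref{depth}, Lemma \ref{ref3} and Corollary \ref{c1} are replaced by Lemma \ref{sdepth}, Lemma \ref{ref5} and Proposition \ref{c2}.

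For part (b) I would again induct via Theorem \ref{reg}. Applying Lemma \ref{ref6}(b) to the colon isomorphism gives $\reg(Q^\star_{n,p,q}/(\mathit{J}^\star_{n,p,q}:\mathit{x}_{n+1}))=\reg(Q^\star_{n-2,p,q}/\mathit{J}^\star_{n-2,p,q})=np$, while Lemma \ref{circulentt} and Theorem \ref{star}(b) applied to the quotient isomorphism give $\reg(Q^\star_{n,p,q}/(\mathit{J}^\star_{n,p,q},\mathit{x}_{n+1}))=\reg(Q^\star_{n-1,p,q}/\mathit{J}^\star_{n-1,p,q})+p=(n+2)p$; as $np<(n+2)p$, Theorem \ref{reg}(c) forces $\reg(Q^\star_{n,p,q}/\mathit{J}^\star_{n,p,q})=(n+2)p$. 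Part (c) is then immediate from the Auslander--Buchsbaum formula (Theorem \ref{auss}): the number of vertices of $\brs_q(\mathcal{T}^{\star\star}_n(p))$ is $(1+q)(1+n+(n+2)p)$, so
\begin{equation*}
\pdim(Q^\star_{n,p,q}/\mathit{J}^\star_{n,p,q})=(1+q)(1+n+(n+2)p)-\big((p+q)(n+1)+p\big)=(1+pq)(n+1)+pq.
\end{equation*}

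The step I expect to be most delicate is pinning down the three isomorphisms rigorously, i.e.\ verifying that after passing to $(\mathit{J}^\star_{n,p,q}:\mathit{x}_{n+1})$ or $(\mathit{J}^\star_{n,p,q},\mathit{x}_{n+1})$ the surviving graph on the low end is precisely $\brs_q(\mathcal{T}^{\star\star}_{n-2}(p))$ or $\brs_q(\mathcal{T}^{\star\star}_{n-1}(p))$, and that the detached vertices factor as the stated tensor products of free variables and isolated $q$-stars; once these are secured, every invariant reduces to the cited tensor/union formulas. A secondary point worth checking is that $(q-1)(2p-1)\ge 0$ really does make the quotient term the minimum in the Depth Lemma for all $p,q\ge 1$, so that the lower bound from the Depth Lemma is attained and the upper bound from Corollary \ref{c1} is sharp.
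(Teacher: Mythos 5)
Your proposal reproduces the paper's argument essentially line for line: the same short exact sequence in $x_{n+1}$, the same three isomorphisms onto $Q^\star_{n-2,p,q}$ and $Q^\star_{n-1,p,q}$ tensored with free variables and isolated $q$-stars, the Depth Lemma plus Corollary \ref{c1} (resp.\ Proposition \ref{c2}) for the two bounds, Theorem \ref{reg}(c) for regularity, and Auslander--Buchsbaum for projective dimension, with the base cases $n=1,2$ handled via Remark \ref{R2}. Your explicit check that the quotient term realizes the minimum in the Depth Lemma, via $(q-1)(2p-1)\ge 0$, is a detail the paper leaves implicit but is correct, and your vertex count $(1+q)(1+n+(n+2)p)$ for part (c) is also correct.
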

	\begin{proof}
		First we prove the result for depth. Consider the short exact sequence
		\begin{equation}\label{es4}
		0\longrightarrow \mathit{Q}^\star_{n,p,q}/(\mathit{J}^\star_{n,p,q}:\mathit{x}_{n+1})\xrightarrow{\cdot \mathit{x}_{n+1}} \mathit{Q}^\star_{n,p,q}/\mathit{J}^\star_{n,p,q}\longrightarrow \mathit{Q}^\star_{n,p,q}/(\mathit{J}^\star_{n,p,q},\mathit{x}_{n+1})\longrightarrow 0,
		\end{equation}
		by applying Depth Lemma and Lemma \ref{sdepth} on Eq \ref{es4}, we get
		\begin{equation}\label{f}
		\depth(\mathit{Q}^\star_{n,p,q}/\mathit{J}^\star_{n,p,q})\geq \min\{\depth(\mathit{Q}^\star_{n,p,q}/(\mathit{J}^\star_{n,p,q}:\mathit{x}_{n+1})), \depth(\mathit{Q}^\star_{n,p,q}/(\mathit{J}^\star_{n,p,q},\mathit{x}_{n+1}))\},
		\end{equation}
        and
        \begin{equation}\label{ssf}
		\sdepth(\mathit{Q}^\star_{n,p,q}/\mathit{J}^\star_{n,p,q})\geq \min\{\sdepth(\mathit{Q}^\star_{n,p,q}/(\mathit{J}^\star_{n,p,q}:\mathit{x}_{n+1})), \sdepth(\mathit{Q}^\star_{n,p,q}/(\mathit{J}^\star_{n,p,q},\mathit{x}_{n+1}))\}.
		\end{equation}
		We have the following isomorphisms:
\begin{equation}\label{es5*}
\mathit{Q}^\star_{n,p,q}/(\mathit{J}^\star_{n,p,q}:\mathit{x}_{n+1})\cong \mathit{Q}^\star_{n-2,p,q}/\mathit{J}^\star_{n-2,p,q}\bigotimes_KK[\{\mathit{x}_{n+1}\}\cup\mathit{A}_{n}\cup\mathit{C}_{n}\cup\mathit{C}_{n+1}],
		\end{equation}
		\begin{equation}\label{es6*}
		\mathit{Q}^\star_{n,p,q}/(\mathit{J}^\star_{n,p,q},\mathit{x}_{n+1})\cong \mathit{Q}^\star_{n-1,p,q}/\mathit{J}^\star_{n-1,p,q}\bigotimes_KK[\mathit{A}_{n+1}]\bigotimes_{k=1}^{p}{}_KK[V(\mathcal{S}_{q})]/{I(\mathcal{S}_{q})},
		\end{equation} and 
  \begin{equation}\label{999}
\mathit{Q}^\star_{n,p,q}/(\mathit{J}^\star_{n,p,q}:\mathit{y}_{(n+1)1}\mathit{y}_{(n+1)2}\dots\mathit{y}_{(n+1)p})\cong \mathit{Q}^\star_{n-1,p,q}/\mathit{J}^\star_{n-1,p,q}\bigotimes_KK[\mathit{A}_{n+1}\cup\mathit{B}_{n+1}].
		\end{equation}
		If $n=1$, then Eq \ref{es5*} becomes, $\mathit{Q}^\star_{1,p,q}/(\mathit{J}^\star_{1,p,q}:\mathit{x}_{2})\cong \mathit{Q}^\star_{-1,p,q}/\mathit{J}^\star_{-1,p,q}\bigotimes_KK[\{\mathit{x}_{2}\}\cup\mathit{A}_{1}\cup\mathit{C}_{1}\cup\mathit{C}_{2}].$ By Remark \ref{R2}
  \begin{equation}\label{aa}
\mathit{Q}^\star_{1,p,q}/(\mathit{J}^\star_{1,p,q}:\mathit{x}_{2})\cong \bigotimes_{k=1}^{p} K[V({\mathcal{S}_{q}})]/I(\mathcal{S}_{q})\bigotimes_KK[\{\mathit{x}_{2}\}\cup\mathit{A}_{1}\cup\mathit{C}_{1}\cup\mathit{C}_{2}].
\end{equation}
By Lemma \ref{ref6}(a) $$\depth(\mathit{Q}^\star_{1,p,q}/(\mathit{J}^\star_{1,p,q}:\mathit{x}_{2}))=\sum_{k=1}^{p}\depth(K[V({\mathcal{S}_{q}})]/I(\mathcal{S}_{q}))+\depth(K[\{\mathit{x}_{2}\}\cup\mathit{A}_{1}\cup\mathit{C}_{1}\cup\mathit{C}_{2}]),$$
and 
$$\sdepth(\mathit{Q}^\star_{1,p,q}/(\mathit{J}^\star_{1,p,q}:\mathit{x}_{2}))=\sum_{k=1}^{p}\sdepth(K[V({\mathcal{S}_{q}})]/I(\mathcal{S}_{q}))+\sdepth(K[\{\mathit{x}_{2}\}\cup\mathit{A}_{1}\cup\mathit{C}_{1}\cup\mathit{C}_{2}]).$$ 
Using Theorem \ref{star}(a), $\depth(\mathit{Q}^\star_{1,p,q}/(\mathit{J}^\star_{1,p,q}:\mathit{x}_{2}))=\sdepth(\mathit{Q}^\star_{1,p,q}/(\mathit{J}^\star_{1,p,q}:\mathit{x}_{2}))=\sum_{k=1}^{p}1+1+q+pq+pq=1+p+q+2pq.$ Also by Eq \ref{es6*}, $$\mathit{Q}^\star_{1,p,q}/(\mathit{J}^\star_{1,p,q},\mathit{x}_{2})\cong \mathit{Q}^\star_{0,p,q}/\mathit{J}^\star_{0,p,q}\bigotimes_K K[\mathit{A}_{2}]\bigotimes_{k=1}^{p} K[V(\mathcal{S}_{q})]/I(\mathcal{S}_{q}),$$ 
again by Remark \ref{R2}
\begin{equation}\label{bb}
\mathit{Q}^\star_{1,p,q}/(\mathit{J}^\star_{1,p,q},\mathit{x}_{2})\cong K[V(\mathcal{S}_{2p,q})]/I(\mathcal{S}_{2p,q})\bigotimes_K K[\mathit{A}_{2}]\bigotimes_{k=1}^{p} K[V(\mathcal{S}_{q})]/{I(\mathcal{S}_{q})}.
\end{equation}
By Lemma \ref{ref3} and Lemma \ref{ref5}, it follows that
\begin{align*}
\depth(\mathit{Q}^\star_{1,p,q}/(\mathit{J}^\star_{1,p,q},\mathit{x}_{2}))=& \depth(K[V(\mathcal{S}_{2p,q})]/I(\mathcal{S}_{2p,q}))+\depth(K[\mathit{A}_{2}])\\
&+\sum_{k=1}^{p}\depth(K[V(\mathcal{S}_{q})]/{I(\mathcal{S}_{q})}).
\end{align*}
and
\begin{align*}
\sdepth(\mathit{Q}^\star_{1,p,q}/(\mathit{J}^\star_{1,p,q},\mathit{x}_{2}))\geq& \sdepth(K[V(\mathcal{S}_{2p,q})]/I(\mathcal{S}_{2p,q}))+\sdepth(K[\mathit{A}_{2}])\\
&+\sum_{k=1}^{p}\sdepth(K[V(\mathcal{S}_{q})]/{I(\mathcal{S}_{q})}).
\end{align*}
Using Corollary \ref{bstar}(a) and Theorem \ref{star}(a) $$\depth(\mathit{Q}^\star_{1,p,q}/(\mathit{J}^\star_{1,p,q},\mathit{x}_{2}))=2p+q+q+\sum_{k=1}^{p}1=2p+2q+p=3p+2q,$$
and
$$\sdepth(\mathit{Q}^\star_{1,p,q}/(\mathit{J}^\star_{1,p,q},\mathit{x}_{2}))\geq 2p+q+q+\sum_{k=1}^{p}1=2p+2q+p=3p+2q.$$Thus by Eq \ref{f} and Eq \ref{ssf}, $\depth(\mathit{Q}^\star_{1,p,q}/\mathit{J}^\star_{1,p,q}),\sdepth(\mathit{Q}^\star_{1,p,q}/\mathit{J}^\star_{1,p,q})\geq3p+2q.$ Now since $\mathit{y}_{21}\mathit{y}_{22}\dots\mathit{y}_{2p}\notin\mathit{J}^\star_{1,p,q},$ so by Eq \ref{999}, $\mathit{Q}^\star_{1,p,q}/(\mathit{J}^\star_{1,p,q}:\mathit{y}_{21}\mathit{y}_{22}\dots\mathit{y}_{2p})\cong\mathit{Q}^\star_{0,p,q}/\mathit{J}^\star_{0,p,q}\bigotimes_KK[\mathit{A}_{2}\cup\mathit{B}_{2}],$ which by Remark \ref{R2} implies that $\mathit{Q}^\star_{1,p,q}/(\mathit{J}^\star_{1,p,q}:\mathit{y}_{21}\mathit{y}_{22}\dots\mathit{y}_{2p})\cong K[V(\mathcal{S}_{2p,q})]/I(\mathcal{S}_{2p,q})\bigotimes_KK[\mathit{A}_{2}\cup\mathit{B}_{2}].$ Applying Lemma \ref{ref6}(a), we have $$\depth(\mathit{Q}^\star_{1,p,q}/(\mathit{J}^\star_{1,p,q}:\mathit{y}_{21}\mathit{y}_{22}\dots\mathit{y}_{2p}))=\depth(K[V(\mathcal{S}_{2p,q})]/I(\mathcal{S}_{2p,q}))+\depth(K[\mathit{A}_{2}\cup\mathit{B}_{2}]),$$
and
$$\sdepth(\mathit{Q}^\star_{1,p,q}/(\mathit{J}^\star_{1,p,q}:\mathit{y}_{21}\mathit{y}_{22}\dots\mathit{y}_{2p}))=\sdepth(K[V(\mathcal{S}_{2p,q})]/I(\mathcal{S}_{2p,q}))+\sdepth(K[\mathit{A}_{2}\cup\mathit{B}_{2}]).$$ Using Corollary \ref{bstar}(a), $$\depth(\mathit{Q}^\star_{1,p,q}/(\mathit{J}^\star_{1,p,q}:\mathit{y}_{21}\mathit{y}_{22}\dots\mathit{y}_{2p}))=\sdepth(\mathit{Q}^\star_{1,p,q}/(\mathit{J}^\star_{1,p,q}:\mathit{y}_{21}\mathit{y}_{22}\dots\mathit{y}_{2p}))=2p+q+q+p=3p+2q.$$ Thus by Corollary \ref{c1} and Proposition \ref{c2}, $$\depth(Q^\star_{1,p,q}/\mathit{J}^\star_{1,p,q}), \sdepth(Q^\star_{1,p,q}/\mathit{J}^\star_{1,p,q})\leq3p+2q.$$ Hence
\begin{equation}\label{ee}
\depth(Q^\star_{1,p,q}/\mathit{J}^\star_{1,p,q})=\sdepth(Q^\star_{1,p,q}/\mathit{J}^\star_{1,p,q})=3p+2q.
\end{equation}
If $n=2$, then by using the similar arguments and case $n=1$, one can easily verify the required result, that is, $\depth(Q^\star_{2,p,q}/\mathit{J}^\star_{2,p,q})=\sdepth(Q^\star_{2,p,q}/\mathit{J}^\star_{2,p,q})=4p+3q.$
		Let $n\geq3$, the result follows by applying induction on $n$.
		Using Eq \ref{es5*} and Lemma \ref{ref6}(a), we have$$\depth(\mathit{Q}^\star_{n,p,q}/(\mathit{J}^\star_{n,p,q}:\mathit{x}_{n+1}))=\depth(\mathit{Q}^\star_{n-2,p,q}/\mathit{J}^\star_{n-2,p,q})+\depth(K[\{\mathit{x}_{n+1}\}\cup\mathit{A}_{n}\cup\mathit{C}_{n}\cup\mathit{C}_{n+1}]),$$
        and        $$\sdepth(\mathit{Q}^\star_{n,p,q}/(\mathit{J}^\star_{n,p,q}:\mathit{x}_{n+1}))=\sdepth(\mathit{Q}^\star_{n-2,p,q}/\mathit{J}^\star_{n-2,p,q})+\sdepth(K[\{\mathit{x}_{n+1}\}\cup\mathit{A}_{n}\cup\mathit{C}_{n}\cup\mathit{C}_{n+1}]),$$
        by induction $$\depth(\mathit{Q}^\star_{n,p,q}/(\mathit{J}^\star_{n,p,q}:\mathit{x}_{n+1}))=(p+q)(n-2+1)+1+q+pq+pq=(p+q)n+2pq+1,$$
        and
        $$\sdepth(\mathit{Q}^\star_{n,p,q}/(\mathit{J}^\star_{n,p,q}:\mathit{x}_{n+1}))=(p+q)(n-2+1)+1+q+pq+pq=(p+q)n+2pq+1.$$
        Now by Eq \ref{es6*}, Lemma \ref{ref3} and Lemma \ref{ref5}, we have  
		\begin{align*} \depth\big(	\mathit{Q}^\star_{n,p,q}/(\mathit{J}^\star_{n,p,q},\mathit{x}_{n+1})\big)=&\depth(\mathit{Q}^\star_{n-1,p,q}/\mathit{J}^\star_{n-1,p,q})+\depth(K[\mathit{A}_{n+1}])\\
  &+\sum_{j=1}^{p}\depth(K[V(\mathcal{S}_{q})]/{I(\mathcal{S}_{q})}),
  \end{align*}
  and
  \begin{align*} \sdepth\big(	\mathit{Q}^\star_{n,p,q}/(\mathit{J}^\star_{n,p,q},\mathit{x}_{n+1})\big)\geq&\sdepth(\mathit{Q}^\star_{n-1,p,q}/\mathit{J}^\star_{n-1,p,q})+\sdepth(K[\mathit{A}_{n+1}])\\
  &+\sum_{j=1}^{p}\sdepth(K[V(\mathcal{S}_{q})]/{I(\mathcal{S}_{q})}),
  \end{align*}
	by induction and Theorem \ref{star}(a)
\begin{multline*}
\depth(\mathit{Q}^\star_{n,p,q}/(\mathit{J}^\star_{n,p,q},\mathit{x}_{n+1}))=(p+q)(n-1+1)+p+q+\sum_{j=1}^{p}1=(p+q)n+p+q+p=(p+q)(n+1)+p,\end{multline*}
and
\begin{multline*}
\sdepth(\mathit{Q}^\star_{n,p,q}/(\mathit{J}^\star_{n,p,q},\mathit{x}_{n+1}))\geq(p+q)(n-1+1)+p+q+\sum_{j=1}^{p}1=(p+q)n+p+q+p=(p+q)(n+1)+p.\end{multline*}
Again using Eq \ref{f}, we get $\depth(\mathit{Q}^\star_{n,p,q}/\mathit{J}^\star_{n,p,q}),\sdepth(\mathit{Q}^\star_{n,p,q}/\mathit{J}^\star_{n,p,q})\geq (p+q)(n+1)+p.$ Now since $\mathit{y}_{(n+1)1}\mathit{y}_{(n+1)2}\dots\mathit{y}_{(n+1)p}\notin\mathit{J}^\star_{n,p,q},$ so by Eq \ref{999} and Lemma \ref{ref6}(a) \begin{align*}
\depth(\mathit{Q}^\star_{n,p,q}/(\mathit{J}^\star_{n,p,q}:\mathit{y}_{(n+1)1}\mathit{y}_{(n+1)2}\dots\mathit{y}_{(n+1)p}))=&\depth(\mathit{Q}^\star_{n-1,p,q}/\mathit{J}^\star_{n-1,p,q})\\
&+\depth(K[\mathit{A}_{n+1}\cup\mathit{B}_{n+1}]),\end{align*}
and
\begin{align*}
\sdepth(\mathit{Q}^\star_{n,p,q}/(\mathit{J}^\star_{n,p,q}:\mathit{y}_{(n+1)1}\mathit{y}_{(n+1)2}\dots\mathit{y}_{(n+1)p}))=&\sdepth(\mathit{Q}^\star_{n-1,p,q}/\mathit{J}^\star_{n-1,p,q})\\
&+\sdepth(K[\mathit{A}_{n+1}\cup\mathit{B}_{n+1}]).\end{align*}
		 By induction $$\depth(\mathit{Q}^\star_{n,p,q}/(\mathit{J}^\star_{n,p,q}:\mathit{y}_{(n+1)1}\mathit{y}_{(n+1)2}\dots\mathit{y}_{(n+1)p}))=(p+q)(n-1+1)+p+p+q=(p+q)(n+1)+p,$$
         and        $$\sdepth(\mathit{Q}^\star_{n,p,q}/(\mathit{J}^\star_{n,p,q}:\mathit{y}_{(n+1)1}\mathit{y}_{(n+1)2}\dots\mathit{y}_{(n+1)p}))=(p+q)(n-1+1)+p+p+q=(p+q)(n+1)+p.$$
		Thus by Corollary \ref{c1} and Proposition \ref{c2}, $\depth(\mathit{Q}^\star_{n,p,q}/\mathit{J}^\star_{n,p,q}),\sdepth(\mathit{Q}^\star_{n,p,q}/\mathit{J}^\star_{n,p,q})\leq (p+q)(n+1)+p.$ Hence $\depth(\mathit{Q}^\star_{n,p,q}/\mathit{J}^\star_{n,p,q})=\sdepth(\mathit{Q}^\star_{n,p,q}/\mathit{J}^\star_{n,p,q})=(p+q)(n+1)+p.$\\
Now we prove the result for regularity. If $n=1,$ then by using Eq \ref{aa} and Lemma \ref{circulentt}$$\reg(\mathit{Q}^\star_{1,p,q}/(\mathit{J}^\star_{1,p,q}:\mathit{x}_{2}))= \sum_{k=1}^{p}\reg(K[V({\mathcal{S}_{q}})]/I(\mathcal{S}_{q})).$$ By Theorem \ref{star}(b), $\reg(\mathit{Q}^\star_{1,p,q}/(\mathit{J}^\star_{1,p,q}:\mathit{x}_{2}))= \sum_{k=1}^{p}1=p.$ Applying Lemma \ref{ref6}(b) and Lemma \ref{circulentt} on Eq \ref{bb}, we have $$\reg(\mathit{Q}^\star_{1,p,q}/(\mathit{J}^\star_{1,p,q},\mathit{x}_{2}))=\reg(K[V(\mathcal{S}_{2p,q})]/I(\mathcal{S}_{2p,q}))+\sum_{k=1}^{p}\reg(K[V(\mathcal{S}_{q})]/{I(\mathcal{S}_{q})}).$$ Using Corollary \ref{bstar}(b) and Theorem \ref{star}(b),
$\reg(\mathit{Q}^\star_{1,p,q}/(\mathit{J}^\star_{1,p,q},\mathit{x}_{2}))=2p+\sum_{k=1}^{p}1=2p+p=3p.$
By Theorem \ref{reg}(c)
\begin{equation}\label{hh}
\reg(\mathit{Q}^\star_{1,p,q}/\mathit{J}^\star_{1,p,q})=3p.
\end{equation} If $n=2$, then by using similar arguments and case $n=1$, the desired result can be easily verified, that is $\reg\big(Q^\star_{2,p,q}/\mathit{J}^\star_{2,p,q}\big)=4p.$ Now let $n\geq3$, the result follows by applying induction on $n$. By Eq \ref{es5*} and Lemma \ref{ref6}(b), we have $$\reg(Q^\star_{n,p,q}/(\mathit{J}^\star_{n,p,q}:\mathit{x}_{n+1}))= \reg(\mathit{Q}^\star_{n-2,p,q}/\mathit{J}^\star_{n-2,p,q}).$$ By induction, $\reg(Q^\star_{n,p,q}/(\mathit{J}^\star_{n,p,q}:\mathit{x}_{n+1}))=(n-2+2)p=np.$ Using Lemma \ref{circulentt} and  Lemma \ref{ref6}(b) on Eq \ref{es6*}, $$\reg(\mathit{Q}^\star_{n,p,q}/(\mathit{J}^\star_{n,p,q},\mathit{x}_{n+1}))=\reg(\mathit{Q}^\star_{n-1,p,q}/\mathit{J}^\star_{n-1,p,q})+\sum_{k=1}^{p}\reg(K[V(\mathcal{S}_{q})]/{I(\mathcal{S}_{q})}).$$ By induction and Theorem \ref{star}(b),
$\reg(Q^\star_{n,p,q}/(\mathit{J}^\star_{n,p,q},\mathit{x}_{n+1}))=(n-1+2)p+\sum_{k=1}^{p}1=(n+1)p+p=(n+2)p.$ Thus by Theorem \ref{reg}(c), $\reg(Q^\star_{n,p,q}/\mathit{J}^\star_{n,p,q})=(n+2)p.$ The result for projective dimension can easily be proved by using Auslander–Buchsbaum formula.

		
	\end{proof}
	\begin{Theorem}\label{Th4}
 {\em
		Let $p,q\geq 1$ and $n\geq3$. Then
		\begin{itemize}
			\item[(a)] $\depth(\mathit{Q}_{n,p,q}/\mathit{J}_{n,p,q})=\sdepth(\mathit{Q}_{n,p,q}/\mathit{J}_{n,p,q})=n(p+q),$
			\item[(b)]$\reg(\mathit{Q}_{n,p,q}/\mathit{J}_{n,p,q})=np,$
			\item[(c)]$\pdim(\mathit{Q}_{n,p,q}/\mathit{J}_{n,p,q})=(1+pq)n$.
		\end{itemize}
  }
	\end{Theorem}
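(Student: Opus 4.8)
The plan is to imitate the reduction used for Theorem \ref{Th3}, replacing each non-bristled module by its bristled counterpart and invoking Lemma \ref{Lem4} in place of Lemma \ref{Lem3}. Splitting on the cycle vertex $x_n$, I would first record the two structural isomorphisms
\begin{align*}
Q_{n,p,q}/(J_{n,p,q}:x_n) &\cong Q^\star_{n-4,p,q}/J^\star_{n-4,p,q}\otimes_K K[\{x_n\}\cup A_1\cup A_{n-1}\cup C_{n-1}\cup C_n],\\
Q_{n,p,q}/(J_{n,p,q},x_n) &\cong Q^\star_{n-2,p,q}/J^\star_{n-2,p,q}\otimes_K K[A_n].
\end{align*}
The first comes from deleting the closed neighbourhood $N[x_n]$: the vertices $x_1,x_{n-1}$ and the apex bundles $\{y_{(n-1)j}\},\{y_{nj}\}$ adjacent to $x_n$ vanish, the surviving path $x_2,\dots,x_{n-2}$ with its dangling apexes at both ends is $\brs_q(\mathcal{T}^{\star\star}_{n-4}(p))$, and the pendant bundles $A_1,A_{n-1},C_{n-1},C_n$ of the killed vertices (together with $x_n$ itself) become free. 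The second comes from deleting $x_n$ alone, which breaks the cycle into $\brs_q(\mathcal{T}^{\star\star}_{n-2}(p))$ and frees the bristles $A_n$. The degenerate indices occurring for $n=3,4$ are read off from Remark \ref{R2}.

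For part (a) I would apply the Depth Lemma to
\[
0\to Q_{n,p,q}/(J_{n,p,q}:x_n)\to Q_{n,p,q}/J_{n,p,q}\to Q_{n,p,q}/(J_{n,p,q},x_n)\to 0.
\]
By Lemma \ref{Lem4}(a) and Lemma \ref{ref6}(a) the right-hand term has depth $(p+q)(n-1)+p+q=n(p+q)$, while the colon term has depth $(p+q)(n-3)+p+1+2q+2pq=n(p+q)+(q-1)(2p-1)\ge n(p+q)$; hence $\depth(Q_{n,p,q}/J_{n,p,q})\ge n(p+q)$. For the reverse inequality I would take $l=x_{n1}x_{n2}\cdots x_{nq}\notin J_{n,p,q}$, note that $Q_{n,p,q}/(J_{n,p,q}:l)\cong Q^\star_{n-2,p,q}/J^\star_{n-2,p,q}\otimes_K K[A_n]$ has depth exactly $n(p+q)$, and apply Corollary \ref{c1}. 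The Stanley depth is identical once Lemma \ref{sdepth}, Lemma \ref{ref5} and Proposition \ref{c2} replace the Depth Lemma, Lemma \ref{ref3} and Corollary \ref{c1}.

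For part (b), since tensoring with a polynomial ring preserves regularity (Lemma \ref{ref6}(b)), Lemma \ref{Lem4}(b) gives $\reg(Q_{n,p,q}/(J_{n,p,q}:x_n))=(n-2)p$ and $\reg(Q_{n,p,q}/(J_{n,p,q},x_n))=np$. As $(n-2)p<np$ for $p\ge1$, Theorem \ref{reg}(c) yields $\reg(Q_{n,p,q}/J_{n,p,q})=np$. Part (c) is then immediate from the Auslander--Buchsbaum formula (Theorem \ref{auss}): with $\depth(Q_{n,p,q})=|V(\brs_q(\mathcal{O}_n(p)))|=(q+1)(p+1)n$ one obtains $\pdim=(q+1)(p+1)n-n(p+q)=(1+pq)n$.

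I expect the main obstacle to be the justification of the two structural isomorphisms, and in particular bookkeeping which pendant bundles become free under each operation: removing $N[x_n]$ frees $\{x_n\}\cup A_1\cup A_{n-1}\cup C_{n-1}\cup C_n$ and collapses the snake to index $n-4$, whereas removing $x_n$ alone frees only $A_n$ and collapses it to index $n-2$. A secondary check is that the small-$n$ specializations forced by the conventions of Remark \ref{R2} (namely $Q^\star_{-1,p,q}$ and $Q^\star_{0,p,q}$ for $n=3,4$) still produce the depth value $n(p+q)$ and the regularity value $(n-2)p$ on the colon side, so that the Depth Lemma bound and Theorem \ref{reg}(c) apply uniformly.
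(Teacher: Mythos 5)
Your proposal is correct and follows essentially the same route as the paper: the same short exact sequence at $x_n$, the same three isomorphisms onto $Q^\star_{n-4,p,q}$, $Q^\star_{n-2,p,q}$ and the colon by $x_{n1}\cdots x_{nq}$, the Depth Lemma plus Corollary \ref{c1} for depth, Theorem \ref{reg}(c) for regularity, and Auslander--Buchsbaum for projective dimension. The only cosmetic difference is that you verify explicitly that the colon term's depth exceeds $n(p+q)$ by $(q-1)(2p-1)\geq 0$, whereas the paper simply takes the minimum; your small-$n$ checks via Remark \ref{R2} match the paper's explicit treatment of $n=3,4$.
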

	\begin{proof}
		First we prove the result for depth. Consider the short exact sequence.
		\begin{equation}\label{es7}
		0\longrightarrow \mathit{Q}_{n,p,q}/(\mathit{J}_{n,p,q}:\mathit{x}_{n})\xrightarrow{\cdot \mathit{x}_{n}} \mathit{Q}_{n,p,q}/\mathit{J}_{n,p,q}\longrightarrow \mathit{Q}_{n,p,q}/(\mathit{J}_{n,p,q},\mathit{x}_{n})\longrightarrow 0,
		\end{equation}
		by applying Depth Lemma and Lemma \ref{sdepth} on Eq \ref{es7}, we get
		\begin{equation}\label{g}
		\depth(\mathit{Q}_{n,p,q}/\mathit{J}_{n,p,q})\geq \min\{\depth(\mathit{Q}_{n,p,q}/(\mathit{J}_{n,p,q}:\mathit{x}_{n})), \depth(\mathit{Q}_{n,p,q}/(\mathit{J}_{n,p,q},\mathit{x}_{n}))\}.
		\end{equation}
        and
        \begin{equation}\label{ssg}
		\sdepth(\mathit{Q}_{n,p,q}/\mathit{J}_{n,p,q})\geq \min\{\sdepth(\mathit{Q}_{n,p,q}/(\mathit{J}_{n,p,q}:\mathit{x}_{n})), \sdepth(\mathit{Q}_{n,p,q}/(\mathit{J}_{n,p,q},\mathit{x}_{n}))\}.
		\end{equation}
		After renumbering the variables, we have the following isomorphisms:
		\begin{equation}\label{es7*}
		\mathit{Q}_{n,p,q}/(\mathit{J}_{n,p,q}:\mathit{x}_{n})\cong \mathit{Q}^\star_{n-4,p,q}/\mathit{J}^\star_{n-4,p,q}\bigotimes_KK[\{\mathit{x}_{n}\}\cup\mathit{A}_{1}\cup\mathit{A}_{n-1}\cup\mathit{C}_{n-1}\cup\mathit{C}_{n}],
		\end{equation}
		\begin{equation}\label{es8*}
		\mathit{Q}_{n,p,q}/(\mathit{J}_{n,p,q},\mathit{x}_{n})\cong \mathit{Q}^\star_{n-2,p,q}/\mathit{J}^\star_{n-2,p,q}\bigotimes_KK[\mathit{A}_{n}],
		\end{equation} and 
  \begin{equation}\label{cc}
\mathit{Q}_{n,p,q}/(\mathit{J}_{n,p,q}:\mathit{x}_{n1}\mathit{x}_{n2}\dots\mathit{x}_{nq})\cong \mathit{Q}^\star_{n-2,p,q}/\mathit{J}^\star_{n-2,p,q}\bigotimes_KK[\mathit{A}_{n}].
  \end{equation}
  If $n=3,$ then by Eq \ref{es7*}, $\mathit{Q}_{3,p,q}/(\mathit{J}_{3,p,q}:\mathit{x}_{3})\cong \mathit{Q}^\star_{-1,p,q}/\mathit{J}^\star_{-1,p,q}\bigotimes_KK[\{\mathit{x}_{3}\}\cup\mathit{A}_{1}\cup\mathit{A}_{2}\cup\mathit{C}_{2}\cup\mathit{C}_{3}].$ By Remark \ref{R2}
  \begin{equation}\label{dd}
\mathit{Q}_{3,p,q}/(\mathit{J}_{3,p,q}:\mathit{x}_{3})\cong \bigotimes_{k=1}^{p} K[V({\mathcal{S}_{q}})]/I(\mathcal{S}_{q})\bigotimes_KK[\{\mathit{x}_{3}\}\cup\mathit{A}_{1}\cup\mathit{A}_{2}\cup\mathit{C}_{2}\cup\mathit{C}_{3}],  
  \end{equation} applying Lemma \ref{ref6}(a)
  $$\depth(\mathit{Q}_{3,p,q}/(\mathit{J}_{3,p,q}:\mathit{x}_{3}))=\sum_{k=1}^{p}\depth(K[V({\mathcal{S}_{q}})]/I(\mathcal{S}_{q}))+\depth(K[\{\mathit{x}_{3}\}\cup\mathit{A}_{1}\cup\mathit{A}_{2}\cup\mathit{C}_{2}\cup\mathit{C}_{3}]),$$
  and  $$\sdepth(\mathit{Q}_{3,p,q}/(\mathit{J}_{3,p,q}:\mathit{x}_{3}))=\sum_{k=1}^{p}\sdepth(K[V({\mathcal{S}_{q}})]/I(\mathcal{S}_{q}))+\sdepth(K[\{\mathit{x}_{3}\}\cup\mathit{A}_{1}\cup\mathit{A}_{2}\cup\mathit{C}_{2}\cup\mathit{C}_{3}]).$$
  Using Theorem \ref{star}(a), $\depth(\mathit{Q}_{3,p,q}/(\mathit{J}_{3,p,q}:\mathit{x}_{3}))=\sdepth(\mathit{Q}_{3,p,q}/(\mathit{J}_{3,p,q}:\mathit{x}_{3}))=\sum_{k=1}^{p}1+1+2q+2pq=p+1+2q+2pq=(2q+1)(p+1).$ Now by Eq \ref{es8*}
  \begin{equation}\label{ff}
\mathit{Q}_{3,p,q}/(\mathit{J}_{3,p,q},\mathit{x}_{3})\cong \mathit{Q}^\star_{1,p,q}/\mathit{J}^\star_{1,p,q}\bigotimes_KK[\mathit{A}_{3}], \end{equation} by Lemma \ref{ref6}(a), $$\depth(\mathit{Q}_{3,p,q}/(\mathit{J}_{3,p,q},\mathit{x}_{3}))=\depth(\mathit{Q}^\star_{1,p,q}/\mathit{J}^\star_{1,p,q})+\depth(K[\mathit{A}_{3}])$$ and $$\sdepth(\mathit{Q}_{3,p,q}/(\mathit{J}_{3,p,q},\mathit{x}_{3}))=\sdepth(\mathit{Q}^\star_{1,p,q}/\mathit{J}^\star_{1,p,q})+\sdepth(K[\mathit{A}_{3}]),$$ by Eq \ref{ee}, $\depth(\mathit{Q}_{3,p,q}/(\mathit{J}_{3,p,q},\mathit{x}_{3}))=\sdepth(\mathit{Q}_{3,p,q}/(\mathit{J}_{3,p,q},\mathit{x}_{3}))=3p+2q+q=3(p+q).$ Thus by Eq \ref{g} and Eq \ref{ssg}, $\depth(\mathit{Q}_{3,p,q}/\mathit{J}_{3,p,q}),\sdepth(\mathit{Q}_{3,p,q}/\mathit{J}_{3,p,q})\geq 3(p+q).$ Now since $\mathit{x}_{31}\mathit{x}_{32}\dots\mathit{x}_{3q}\notin\mathit{J}_{3,p,q},$ so by Eq \ref{cc} and Lemma \ref{ref6}(a)
$$\depth(\mathit{Q}_{3,p,q}/(\mathit{J}_{3,p,q}:\mathit{x}_{31}\mathit{x}_{32}\dots\mathit{x}_{3q}))=\depth( \mathit{Q}^\star_{1,p,q}/\mathit{J}^\star_{1,p,q})+\depth(K[\mathit{A}_{3}]),$$ and
$$\sdepth(\mathit{Q}_{3,p,q}/(\mathit{J}_{3,p,q}:\mathit{x}_{31}\mathit{x}_{32}\dots\mathit{x}_{3q}))=\sdepth( \mathit{Q}^\star_{1,p,q}/\mathit{J}^\star_{1,p,q})+\sdepth(K[\mathit{A}_{3}]),$$
by Eq \ref{ee}, we have $\depth(\mathit{Q}_{3,p,q}/(\mathit{J}_{3,p,q}:\mathit{x}_{31}\mathit{x}_{32}\dots\mathit{x}_{3q}))=\sdepth(\mathit{Q}_{3,p,q}/(\mathit{J}_{3,p,q}:\mathit{x}_{31}\mathit{x}_{32}\dots\mathit{x}_{3q}))=3p+2q+q=3(p+q).$ Thus by Corollary \ref{c1} and Proposition \ref{c2}, $$\depth(\mathit{Q}_{3,p,q}/\mathit{J}_{3,p,q}),\sdepth(\mathit{Q}_{3,p,q}/\mathit{J}_{3,p,q})\leq 3(p+q).$$ Hence $\depth(\mathit{Q}_{3,p,q}/\mathit{J}_{3,p,q})=\sdepth(\mathit{Q}_{3,p,q}/\mathit{J}_{3,p,q})=3(p+q).$ If $n=4,$ then by using the similar arguments one can easily verify the required result that is $$\depth(\mathit{Q}_{4,p,q}/\mathit{J}_{4,p,q})=\sdepth(\mathit{Q}_{4,p,q}/\mathit{J}_{4,p,q})=4(p+q).$$ 
  Now let $n\geq5$. Applying Lemma \ref{ref6}(a) on Eq \ref{es7*}, we have $$\depth(\mathit{Q}_{n,p,q}/(\mathit{J}_{n,p,q}:\mathit{x}_{n}))=\depth(\mathit{Q}^\star_{n-4,p,q}/\mathit{J}^\star_{n-4,p,q})+\depth(K[\{\mathit{x}_{n}\}\cup\mathit{A}_{1}\cup\mathit{A}_{n-1}\cup\mathit{C}_{n-1}\cup\mathit{C}_{n}]),$$  
  and  $$\sdepth(\mathit{Q}_{n,p,q}/(\mathit{J}_{n,p,q}:\mathit{x}_{n}))=\sdepth(\mathit{Q}^\star_{n-4,p,q}/\mathit{J}^\star_{n-4,p,q})+\sdepth(K[\{\mathit{x}_{n}\}\cup\mathit{A}_{1}\cup\mathit{A}_{n-1}\cup\mathit{C}_{n-1}\cup\mathit{C}_{n}]),$$
using Lemma \ref{Lem4} $$\depth(\mathit{Q}_{n,p,q}/(\mathit{J}_{n,p,q}:\mathit{x}_{n}))=(p+q)(n-4+1)+p+2pq+2q+1=(p+q)(n-3)+2pq+2q+p+1,$$
and
$$\sdepth(\mathit{Q}_{n,p,q}/(\mathit{J}_{n,p,q}:\mathit{x}_{n}))=(p+q)(n-4+1)+p+2pq+2q+1=(p+q)(n-3)+2pq+2q+p+1.$$
Now applying Lemma \ref{ref6}(a) on Eq \ref{es8*}, we have  $$\depth(\mathit{Q}_{n,p,q}/(\mathit{J}_{n,p,q},\mathit{x}_{n}))=\depth(\mathit{Q}^\star_{n-2,p,q}/\mathit{J}^\star_{n-2,p,q})+\depth(K[\mathit{A}_{n}]),$$
and
$$\sdepth(\mathit{Q}_{n,p,q}/(\mathit{J}_{n,p,q},\mathit{x}_{n}))=\sdepth(\mathit{Q}^\star_{n-2,p,q}/\mathit{J}^\star_{n-2,p,q})+\sdepth(K[\mathit{A}_{n}]),$$
		 again using Lemma \ref{Lem4}, $\depth(\mathit{Q}_{n,p,q}/(\mathit{J}_{n,p,q},\mathit{x}_{n}))=\sdepth(\mathit{Q}_{n,p,q}/(\mathit{J}_{n,p,q},\mathit{x}_{n}))=(p+q)(n-2+1)+p+q=(p+q)n.$ Hence  it follows by Eq \ref{g} and Eq \ref{ssg} that, $$\depth(\mathit{Q}_{n,p,q}/\mathit{J}_{n,p,q}),\sdepth(\mathit{Q}_{n,p,q}/\mathit{J}_{n,p,q})\geq (p+q)n.$$ Now since $\mathit{x}_{n1}\mathit{x}_{n2}\dots\mathit{x}_{nq}\notin\mathit{J}_{n,p,q},$ so by Eq \ref{cc} and Lemma \ref{ref6}(a)         $$\depth(\mathit{Q}_{n,p,q}/(\mathit{J}_{n,p,q}:\mathit{x}_{n1}\mathit{x}_{n2}\dots\mathit{x}_{nq}))=\depth( \mathit{Q}^\star_{n-2,p,q}/\mathit{J}^\star_{n-2,p,q})+\depth(K[\mathit{A}_{n}]),$$  and        $$\sdepth(\mathit{Q}_{n,p,q}/(\mathit{J}_{n,p,q}:\mathit{x}_{n1}\mathit{x}_{n2}\dots\mathit{x}_{nq}))=\sdepth( \mathit{Q}^\star_{n-2,p,q}/\mathit{J}^\star_{n-2,p,q})+\sdepth(K[\mathit{A}_{n}]).$$ 
         So by Lemma \ref{Lem4}, $$\depth(\mathit{Q}_{n,p,q}/(\mathit{J}_{n,p,q}:\mathit{x}_{n1}\mathit{x}_{n2}\dots\mathit{x}_{nq}))=\sdepth(\mathit{Q}_{n,p,q}/(\mathit{J}_{n,p,q}:\mathit{x}_{n1}\mathit{x}_{n2}\dots\mathit{x}_{nq}))=(p+q)n.$$
		Again using Corollary \ref{c1} and Proposition \ref{c2}, $$\depth(\mathit{Q}_{n,p,q}/\mathit{J}_{n,p,q}),\sdepth(\mathit{Q}_{n,p,q}/\mathit{J}_{n,p,q})\leq (p+q)n.$$ Hence $\depth(\mathit{Q}_{n,p,q}/\mathit{J}_{n,p,q})=\sdepth(\mathit{Q}_{n,p,q}/\mathit{J}_{n,p,q})=(p+q)n.$
        
 If $n=3$, then by Eq \ref{dd} and Lemma \ref{ref6}(b) $$\reg(\mathit{Q}_{3,p,q}/(\mathit{J}_{3,p,q}:\mathit{x}_{3}))=\sum_{k=1}^{p}\reg(K[V({\mathcal{S}_{q}})]/I(\mathcal{S}_{q})),$$ by Theorem \ref{star}(b), $\reg(\mathit{Q}_{3,p,q}/(\mathit{J}_{3,p,q}:\mathit{x}_{3}))=\sum_{k=1}^{p}1=p.$ Similarly, by Eq \ref{ff} and Lemma \ref{ref6}(b) $\reg(\mathit{Q}_{3,p,q}/(\mathit{J}_{3,p,q},\mathit{x}_{3}))= \reg(\mathit{Q}^\star_{1,p,q}/\mathit{J}^\star_{1,p,q})$ and by Eq \ref{hh}, $$\reg(\mathit{Q}_{3,p,q}/(\mathit{J}_{3,p,q},\mathit{x}_{3}))=3p.$$ The required result follows by Theorem \ref{reg}(c), that is, $\reg(\mathit{Q}_{3,p,q}/\mathit{J}_{3,p,q})=3p.$ If $n=4$, the result can be proved by using similar arguments that is $\reg(\mathit{Q}_{4,p,q}/\mathit{J}_{4,p,q})=4p.$
 
 Now let $n\geq5$. Using Eq \ref{es7*} and Lemma \ref{ref6}(b), we have
$$\reg(\mathit{Q}_{n,p,q}/(\mathit{J}_{n,p,q}:\mathit{x}_{n}))=\reg(\mathit{Q}^\star_{n-4,p,q}/\mathit{J}^\star_{n-4,p,q}).$$ Similarly, by Eq \ref{es8*} and Lemma \ref{ref6}(b) $\reg(\mathit{Q}_{n,p,q}/(\mathit{J}_{n,p,q},\mathit{x}_{n}))=\reg(\mathit{Q}^\star_{n-2,p,q}/\mathit{J}^\star_{n-2,p,q}).$ By Lemma \ref{Lem4}, $\reg(\mathit{Q}_{n,p,q}/(\mathit{J}_{n,p,q}:\mathit{x}_{n}))=(n-4+2)p=(n-2)p$ and  $\reg(\mathit{Q}_{n,p,q}/(\mathit{J}_{n,p,q},\mathit{x}_{n}))=(n-2+2)p=np.$ Hence by Theorem \ref{reg}(c),
			$	\reg(\mathit{Q}_{n,p,q}/\mathit{J}_{n,p,q})=np.$ 
   
    Now by Auslander–Buchsbaum formula, we have $${\pdim}(Q_{n,p,q}/\mathit{J}_{n,p,q})={\depth}(Q_{n,p,q})-{\depth}(Q_{n,p,q}/\mathit{J}_{n,p,q}).$$ Hence ${\pdim}(Q_{n,p,q}/\mathit{J}_{n,p,q})=n(q+1)(p+1)-n(p+q)=n(1+pq).$
	\end{proof}
	
\end{document}